\documentclass[letterpaper, reqno,10 pt]{amsart}
\usepackage[margin=1.0in]{geometry}
\usepackage{amsmath,amssymb,amsthm}
\usepackage{mathrsfs} 
\usepackage{etoolbox}
\usepackage{comment}
\usepackage{graphicx}
\usepackage{wrapfig}
\usepackage{enumerate}

\makeatletter
\patchcmd{\subsubsection}{\scshape}{\bf}{}{}
\patchcmd{\subsubsubsection}{\scshape}{\bf}{}{}
\makeatother

\makeatletter
%Table of Contents
\setcounter{tocdepth}{1}
\setcounter{secnumdepth}{5}

% Add bold to \section titles in ToC and remove . after numbers
\renewcommand{\tocsection}[3]{%
  \indentlabel{\@ifnotempty{#2}{\bfseries\ignorespaces#1 #2\quad}}\bfseries#3}
% Remove . after numbers in \subsection
\renewcommand{\tocsubsection}[3]{%
  \indentlabel{\@ifnotempty{#2}{\ignorespaces#1 #2\quad}}#3}
\renewcommand{\tocsubsubsection}[3]{%
  \indentlabel{\@ifnotempty{#2}{\hspace{1.4cm}\ignorespaces#1 #2\quad}}#3}
%...

\makeatletter
\pretocmd{\chapter}{\addtocontents{toc}{\protect\addvspace{15\p@}}}{}{}
\pretocmd{\section}{\addtocontents{toc}{\protect\addvspace{5\p@}}}{}{}

\newcommand\@dotsep{4.5}
\def\@tocline#1#2#3#4#5#6#7{\relax
  \ifnum #1>\c@tocdepth % then omit
  \else
    \par \addpenalty\@secpenalty\addvspace{#2}%
    \begingroup \hyphenpenalty\@M
    \@ifempty{#4}{%
      \@tempdima\csname r@tocindent\number#1\endcsname\relax
    }{%
      \@tempdima#4\relax
    }%
    \parindent\z@ \leftskip#3\relax \advance\leftskip\@tempdima\relax
    \rightskip\@pnumwidth plus1em \parfillskip-\@pnumwidth
    #5\leavevmode\hskip-\@tempdima{#6}\nobreak
    \leaders\hbox{$\m@th\mkern \@dotsep mu\hbox{.}\mkern \@dotsep mu$}\hfill
    \nobreak
    \hbox to\@pnumwidth{\@tocpagenum{\ifnum#1=1\bfseries\fi#7}}\par% <-- \bfseries for \section page
    \nobreak
    \endgroup
  \fi}
\AtBeginDocument{%
\expandafter\renewcommand\csname r@tocindent0\endcsname{0pt}
}
\def\l@subsection{\@tocline{2}{0pt}{2.5pc}{5pc}{}}
\makeatother

\usepackage{esint}
\usepackage{fancybox}
\usepackage{color}   %May be necessary if you want to color links
\usepackage{hyperref}
\hypersetup{
    colorlinks=true, %set true if you want colored links
    linktoc=all,     %set to all if you want both sections and subsections linked
    linkcolor=blue,  %choose some color if you want links to stand out
}

\newcommand{\Addresses}{{% additional braces for segregating \footnotesize
  \bigskip
  \footnotesize

	\medskip
	\textsc{Instituto de Ciencias Matem\'aticas (CSIC), C. Nicol\'as Cabrera, 13-15, 28049 Madrid, Spain}\par\nopagebreak
  \textit{E-mail address}:  \texttt{daniel.lear@icmat.es}  }}

\newtheorem{thm}{Theorem}[section]
\newtheorem{cor}[thm]{Corollary}

\newtheorem*{prop*}{Proposition}
\newtheorem{defi}[thm]{Definition}

\newtheorem{lemma}[thm]{Lemma}

\newcommand{\R}{\mathbb{R}}
\newcommand{\Q}{\Bbb{Q}}

\newcommand{\Z}{\Bbb{Z}}

\newcommand{\T}{\mathbb{T}}

\begin{document}
\vspace*{-0.1 cm}
\title[ON THE  NON-DIFFUSIVE MAGNETO-GEOSTROPHIC EQUATION]{\textsc{ON THE  NON-DIFFUSIVE MAGNETO-GEOSTROPHIC EQUATION}}
\author{Daniel Lear}
\date{\today}
\maketitle
\vspace{-1 cm}
\begin{abstract}
Motivated by an equation arising in magnetohydrodynamics, we address the well-posedness theroy for the non-diffusive magneto-geostrophic equation. Namely, an active scalar equation in which the divergence-free drift velocity is one derivative more singular that the active scalar. In \cite{Friedlander-Vicol_3}, the authors prove that the non-diffusive equation is ill-posed in the sense of Hadamard in Sobolev spaces, but locally  well posed in spaces of analytic functions. 
%We emphasize that the mechanism producing ill-posedness is the combination of the derivative loss with the anisotropy and the evenness of the symbol.
Here, we give an example of a steady state that is nonlinearly stable for periodic perturbations with initial data localized in frequency straight lines crossing the origin. For  such well-prepared data, the local existence and uniqueness of solutions can be obtained in Sobolev spaces and the global existence holds under a size condition over the $H^{5/2^{+}}(\T^3)$ norm of the perturbation.
% which is inversely proportional to the extension of the frequency support of the initial data.
\end{abstract}

%\textit{Keywords: global existence, incompressible fluid, porous media.}

\tableofcontents

\section{Introduction}\label{Sec_1}
The geodynamo is the process by which the Earth's magnetic field is created and sustained by the motion of the fluid core, which is composed of a rapidly rotating, density stratified, electrically conducting fluid.  The convective processes in the core that produce the velocity fields required for dynamo action are a combination of thermal and compositional convection. The full dynamo problem requires the examination of the full 3D partial differential equations governing convective, incompressible \textit{magnetohydrodyamics} (MHD).

It is therefore reasonable to attempt to gain some insight into the geodynamo by considering a reduction of the full MHD equations to a system that is more tractable, but one that retains many of the essential features relevant to the physics of the Earth's core.

Recently, Moffatt and Loper  \cite{Moffatt}, \cite{Moffatt-Loper} proposed the \textit{magneto-geostrophic} equation (MG) as a model for the geodynamo which is a reduction of the full MHD system.  The physical postulates of this model are the following: slow cooling of the Earth leads to slow solidification of the liquid metal core onto the solid inner core and releases latent heat of solidification that drives compositional convection in the fluid core.

\subsection{Governing equations:}
We first present the full coupled three-dimensional MHD equations for the evolution of the velocity vector $\mathbf{U}(\mathbf{x},t)$, the magnetic field vector $\mathbf{B}(\mathbf{x},t)$ and the buoyancy field $\Theta(\mathbf{x},t)$ in the Boussinesq approximation and written in the frame of reference rotating with angular velocity $\Omega$. For simplicity, we have assumed that the axis of rotation and the gravity $g$ are aligned in the direction of $e_3$.

Following the notation of Moffatt and Loper \cite{Moffatt-Loper} we obtain the dimensionless equations
\begin{align}\label{mhd}
N^2[R_0(\partial_t \mathbf{U}+\mathbf{U}\cdot\nabla \mathbf{U})+e_3\times \mathbf{U}]&=-\nabla P+\left(e_2\cdot\nabla\right) \mathbf{b}+R_m \mathbf{b}\cdot\nabla \mathbf{b}+N^2\Theta e_3+\epsilon_\nu \Delta \mathbf{U},\nonumber\\
R_m[\partial_t \mathbf{b}+\mathbf{U}\cdot\nabla \mathbf{b}-\mathbf{b}\cdot\nabla \mathbf{U}]&=\left(e_2\cdot\nabla\right) \mathbf{U}+\Delta \mathbf{b},\nonumber\\
\partial_t\Theta+\mathbf{U}\cdot\nabla\Theta&=\epsilon_\kappa\Delta\Theta,\nonumber\\
\nabla\cdot \mathbf{U}&=0,\nonumber\\
\nabla\cdot \mathbf{b}&=0,
\end{align}
where $P$  is the sum of the fluid and magnetic pressures, $\epsilon_{\nu}$ is the (non-dimensional) kinematic viscosity and $\epsilon_{\kappa}$ is the (non-dimensional) thermal diffusivity. Here $(e_1,e_2,e_3)$ denote the  Cartesian unit vectors.

Following \cite{Moffatt}, we have assumed in (\ref{mhd}) that the magnetic field in the core is of the form
$$\textbf{B}(\textbf{x},t)=\textbf{B}_0+\textbf{b}(\textbf{x},t),$$
where $\textbf{B}_0$ results from dynamo action and can be considered as locally uniform and steady, and a perturbation field $\textbf{b}(\mathbf{x},t)$ induced by the flow $\textbf{U}(\textbf{x},t)$ across $\textbf{B}_0$. Our choice of $\textbf{B}_0\equiv \beta\, e_2$ as the underlying magnetic field is consistent with the models where the magnetic field is believed to be predominantly toroidal due  to the strong influence of differential rotation \cite{Moffatt-Loper}.\vspace{0.2 cm}

The dimensionless parameters in (\ref{mhd}) are the followings:
\begin{align*}
N^2&=2 \Omega \mu_0 \eta \rho/\beta^2 \qquad &\text{inverse of the Elsasser number},\\
R_o&=V/2L\Omega \qquad &\text{Rossby number},\\
R_m&=VL/\eta \qquad & \text{magnetic Reynolds number},\\
\epsilon_{\nu}&=\nu \eta \mu_0 \rho /\beta^2 L^2 \qquad &\text{inverse square of the Hartman number},\\
\epsilon_{\kappa}&=\kappa/LV \qquad &\text{inverse of the Peclet number}.
\end{align*}
Here $\nu$ is the kinematic viscosity, $\eta$ is the magnetic diffusivity of the fluid, $\kappa$ is the molecular diffusivity of the compositional variation that creates an ambient density $\rho$ and $\mu_0=4\pi \times 10^{-7} \text{NA}^{-2}.$
We adopt as velocity scale $V=\Theta_0 g/2\Omega$ where $\Theta_0$ is the typical amplitude of $\Theta$, and that the length-scale of these variations is $L$.\vspace{0.2 cm}

The orders of magnitude of the nondimensional parameters are motivated by the physical postulates of the Moffatt and Loper model. For the regions in the Earth's fluid core modeled in (\ref{mhd}), it is argued in \cite{Moffatt-Loper} that the parameters have the following orders of magnitude:
$$N^2 \approx 1, \qquad R_o\approx 10^{-3}, \qquad R_m\approx 1, \qquad \epsilon_{\nu}\approx 10^{-8}, \qquad \epsilon_{\kappa}\approx 10^{-8}.$$
The values of $\nu$ and $\kappa$ are speculative, but likely to be extremely small. For a detailed discussion of plausible ranges of the physical parameters that are appropiate for the geodynamo, we refer the reader to  \cite{Ghil-Childress}.

According to Moffat and Loper, the magnetic Reynolds number is relatively small. Then, their model neglects the terms multiplied by $R_o$ and $R_m$ in comparison with the remaining terms. However, we will for the moment retain the viscous and diffusive terms since it involve the highest derivatives.

For the reasons given above, we now drop in (\ref{mhd}) the terms involving the Rossby number $R_o$ and the magnetic Reynolds number $R_m$. Then, we obtain the following reduced system:
\begin{align}\label{mhd_simplified}
N^2 [e_3\times \mathbf{U}]&=-\nabla P+\left(e_2\cdot\nabla\right) \mathbf{b}+N^2\Theta e_3+\epsilon_\nu \Delta \mathbf{U},\nonumber\\
0&=\left(e_2\cdot\nabla\right) \mathbf{U}+\Delta \mathbf{b},\nonumber\\
\partial_t\Theta+\mathbf{U}\cdot\nabla\Theta&=\epsilon_\kappa\Delta\Theta,\nonumber\\
\nabla\cdot \mathbf{U}&=0,\nonumber\\
\nabla\cdot \mathbf{b}&=0.
\end{align}
Essentially this means that the evolution equations for the coupled velocity $\textbf{U}$ and magnetic field \textbf{b} take a simplified ``quasi-static'' form. This system encodes the vestiges of the physics in the problem, namely the Coriolis force, the Lorentz force and gravity.\vspace{0.2 cm}

The behavior of the model is dramatically different when the parameters $\epsilon_{\nu}$ and $\epsilon_{\kappa}$ are present or absent. Since both parameters multiply a Laplacian term, their presence is smoothing. In the present paper we focus our attention in the inviscid case $(\epsilon_{\nu}=0)$. The mathematical properties of the model under the presence of viscosity  have been addressed in a recent sequence of different articles \cite{Friedlander-Suen_3}, \cite{Friedlander-Suen_1} and \cite{Friedlander-Suen_2}.\vspace{0.2 cm}

\subsection{The MG equation:}
A linear relationship can be established between the divergence-free vector fields $\mathbf{U}$ and $\mathbf{b}$ and the scalar \nolinebreak$\Theta$, wherein $\Theta$ will now be regarded as known, thanks to the reduced system:
\begin{align}\label{mhd_linear}
N^2 [e_3\times \mathbf{U}]&=-\nabla P+\left(e_2\cdot\nabla\right) \mathbf{b}+N^2\Theta e_3,\nonumber\\
0&=\left(e_2\cdot\nabla\right) \mathbf{U}+\Delta \mathbf{b},
\end{align}
along with the incompressibility condition
$$\nabla\cdot \mathbf{U}=0, \qquad  \nabla\cdot \mathbf{b}=0.$$

We note that the ratio of the Coriolis to Lorentz forces in their model is of order 1, so for notational simplicity we have set this parameter, denoted by $N^2$ equal to 1.
Following \cite[p.~297]{Friedlander-Vicol_1}, manipulations of the linear system (\ref{mhd_linear}) gives, in component form
\begin{equation}\label{mhd_componentes}
\left\{
\begin{array}{rl}
U_1&=-D^{-1}\left(\partial_2 P+\Gamma\partial_1 P \right),\\
U_2&=\phantom{-}D^{-1}\left(\partial_1 P-\Gamma\partial_2 P \right),\\
\partial_3 U_3 &=\phantom{-}D^{-1}\Gamma\Delta_{H}P,\\
\partial_3 \Theta&=\phantom{-} \left(\Gamma^2\Delta_H D^{-1}+\partial_{3}^2\right)P,
\end{array}
\right.
\end{equation}
where the operators $\Gamma, D$ and $\Delta_{H}$ are defined as
$$\Gamma:=-(-\Delta)^{-1}\partial_2^2, \qquad D:=1+\Gamma^2, \qquad \Delta_{H}:=\partial_1^2+\partial_2^2.$$
Although the physically relevant boundary for a model of the Earth's fluid core is a spherical annulus, for mathematical tractability we considered the system on the domain $\mathbf{x}\in\mathbb{T}^2\times\R$. This can be seen as a first step before considering the case $\T^2\times \text{I}$ with appropiate boundary conditions in the vertical variable.

In order to uniquely determine $U_3$ and $\Theta$ form (\ref{mhd_componentes}), we restrict the system to the function spaces of zero vertical mean, i.e. $\int_{\R}U_3\,dx_3=\int_{\R}\Theta\,dx_3=0$. In fact, without such a restriction the system is not well \nolinebreak defined.\vspace{0.2 cm}

We can integrate the last equation of (\ref{mhd_componentes}) and use the zero vertical mean assuption to obtain that
$$\Theta=A[P],$$
where the operator $A$ is formally defined as $A:=\partial_3^{-1} \left(\Gamma^2\Delta_H D^{-1}+\partial_{3}^2\right)$ in the physical space. On one hand, we now use (\ref{mhd_componentes}) to represent $U_1, U_2$ and $U_3$ in terms of $\Theta$:
\begin{align}\label{U_M[theta]}
U_1&=-D^{-1}\left(\partial_2 +\Gamma\partial_1  \right)\left(A^{-1}[\Theta]\right)\equiv M_1[\Theta],\nonumber\\
U_2&=\phantom{-}D^{-1}\left(\partial_1 -\Gamma\partial_2  \right)\left(A^{-1}[\Theta]\right)\equiv M_2[\Theta],\nonumber\\
U_3 &=\phantom{-}D^{-1}\Gamma\Delta_{H}\left(D^{-1}\Gamma\Delta_{H}+\partial_3^2\right)^{-1}[\Theta]\equiv M_3[\Theta].
\end{align}
\textbf{Remark:} A precise expression of the operator $\textbf{M}$ will be given as a Fourier multiplier operator in  Section \nolinebreak\ref{Sec_2}.\vspace{0.2 cm}
On the other hand, the magnetic vector field $\textbf{b}$ is computed from the scalar $\Theta$ thanks to (\ref{mhd_linear}) via the operator
$$\textbf{b}_j=(-\Delta)^{-1}\partial_2 M_j[\Theta], \qquad \text{for}\quad  j\in\{1,2,3\}.$$

%\subsubsection{Non-linear equation:} 
The sole remaining nonlinearity in the system comes from the coupling of (\ref{mhd_linear}) and the evolution equation for the scalar bouyancy $\Theta$. The active scalar equation for $\Theta$ that contains the non-linear process in Moffatt's model is precisely:
\begin{equation}\label{active_scalar}
\left\{
\begin{array}{rl}
\partial_t \Theta +\mathbf{U}\cdot\nabla\Theta&=\epsilon_\kappa \,\Delta\Theta,\\
\nabla\cdot\mathbf{U}&=0,
\end{array}
\right.
\end{equation}
where the divergence-free velocity $\mathbf{U}$ is explicitly obtained from the bouyancy as $\mathbf{U}=\mathbf{M}[\Theta]$ where $\mathbf{M}$ is the non-local differential operator of order 1 defined in (\ref{U_M[theta]}). We describe below the precise form of that operator.\vspace{0.2 cm}

\noindent
\textbf{Remark:} As we said, we consider for simplicity the domain $\T^2\times\R$. Note that, without loss of generality we may assume that $\int_{\T^2\times\mathbb{R}}\Theta(\mathbf{x},t)\,d\mathbf{x}=0$ for all $t\geq 0$, since the mean of $\Theta$ is conserved by the \nolinebreak flow.\vspace{0.2 cm}

In the following, we refer to the evolution equation (\ref{active_scalar}) with singular drift velocity $\mathbf{U}$ given by (\ref{U_M[theta]}) as the \textit{magneto-geostrophic} equation (MG). In addition, we will distinguish between \textit{diffusive} ($\epsilon_\kappa>0$) and \textit{non-diffusive} case ($\epsilon_\kappa=0$). In the Earth's fluid core the value of the diffusivity $\epsilon_\kappa$ is very small. Hence it is relevant to address both the diffusive evolution, and the non-diffusive version where $\epsilon_\kappa=0.$\vspace{0.2 cm}

The aim of the present paper is to show that the  Cauchy problem for the \textit{non-diffusive} MG equation is  well-posed with respect to some periodic perturbations around a specific steady profile, in the topology of a certain Sobolev space. In the next section, we state the main result of this paper at a descriptive level.

\subsection{Diffusive vs. non-diffusive MG equation:}
In order to study this dichotomy, we recall the following: In the theory of differential equations, it is classical to call a Cauchy problem \textit{well-posed}, in the sense of  Hadamard,  if  given  any  initial  data  in  a  functional  space $X$,  the  problem  has  a  unique  solution  in $L^{\infty}(0,T;X)$, with $T$ depending only on the $X$-norm of the initial data, and moreover the solution map $Y\mapsto L^{\infty}(0,T;X)$ satisfies strong continuity properties, e.g. it is uniformly continuous, Lipschitz, or even $\mathcal{C}^{\infty}$ smooth,  for a sufficiently nice space $Y\subset X$. If one of these properties fail,  the Cauchy problem is called \textit{ill-posed}.\vspace{0.2 cm}

Considering this, both systems have contrasting properties:
\begin{itemize}
	\item Diffusive MG equation: For $\epsilon_\kappa>0$ the equation is globally well-posed and the solutions are  $\mathcal{C}^{\infty}$ smooth for positive times, as it is proved in the papers \cite{Friedlander-Vicol_1} and \cite{Friedlander-Vicol_2}.
	\item Non-diffusive MG equation: For $\epsilon_\kappa=0$, in \cite{Friedlander-Vicol_3} the authors prove that the equation is
ill-posed in the sense of Hadamard in Sobolev spaces, but locally well-posed in spaces of analytic functions.
\end{itemize}
More specifically, we mention that for analytic initial data, the \textit{non-diffusive} MG equation is indeed locally well-posed in the class of real-analytic functions in the spirit of a Cauchy-Kowalewskaya result, since each term in the equation loses at most one derivative.

Moreover, in the same article the authors prove  that the solution map associated to the Cauchy problem is not Lipschitz continuous with respect to perturbations in the initial data around a specific steady profile $\Theta_0(x_3):=\sin(m \,x_3)$ for some integer $m\geq 1$, in the topology of a certain Sobolev space.

The proof consists of a linear and a nonlinear step. After linearizing the problem around $\Theta_0$, the authors 
employ techniques from continued fractions in order to construct an
unstable eigenvalue for the linearized operator.  Once these eigenvalues are exhibited, one may use a fairly robust argument to show that this severe linear ill-posedness implies the Lipschitz ill-posedness for the nonlinear problem.

The use of continued fractions in a fluid stability problem
was introduced in \cite{Sinai} for the Navier-Stokes equations and later adapted for the Euler
equations in \cite{Friedlander-Strauss-Vishik}.\vspace{0.2 cm}

Hence, without the Laplacian to control the unbounded operator $\textbf{M}$ the situation is dramatically different  from the $\textit{diffusive}$ case $\epsilon_{\kappa}>0$. For the above, the problem of the \textit{fractionally diffusive} MG equation arise naturally. This is, one can replace the Laplacian  by nonlocal operators, such as $-(-\Delta)^{\gamma}$ for $\gamma\in (0,1)$. This situation, which is non-physical but mathematically interesting, it was addressed in \cite{Friedlander-Rusin-Vicol_fractional}.  In the subcritical range $\gamma\in(1/2,1)$ the equation is locally well-posed, while it is Hadamard Lipschitz ill-posed for $\gamma\in(0,1/2)$.  At the critical value $\gamma=1/2$ the problem is globally well-posed for suitably small initial data, but is ill-posed for sufficiently large initial data. 

A further feature of interest is that the anisotropy of the
symbol $\textbf{M}$ can be explored as in \cite{Friedlander-Rusin-Vicol_fractional} to obtain an improvement in the regularity of the solutions when the initial data
is supported on a plane in the Fourier space.  For such well-prepared initial data the local existence and uniqueness of solutions can be obtained for all values $\gamma\in(0,1)$, and the global existence holds for all initial
data when $\gamma\in(1/2,1)$. \vspace{0.2 cm}

\subsection{Singular active scalar}
One may view the MG equation as an example of a \textit{singular} active scalar since the drift velocity is given in terms of the advected scalar by a constitutive law which is losing derivatives.\vspace{0.2 cm}

Active scalars appear in many problems coming from fluid mechanics. It consists of solving the Cauchy problem for the transport equation:
\begin{equation}\label{hierarchy_active_scalar}
\left\{
\begin{array}{rl}
\partial_t \Theta +\mathbf{U}\cdot\nabla\Theta&=-\epsilon_\kappa \,(-\Delta)^{\gamma}\Theta,\\
\nabla\cdot\mathbf{U}&=0,
\end{array}
\right.
\end{equation}
where the vector field $\mathbf{U}$ is related to $\Theta$ by some operator. We remark that the MG equation fall into a hierarchy of active scalar equations arising in fluid dynamics in terms of the nature of the operator that produces the drift velocity from the scalar field:\vspace{0.2 cm}

\begin{center}
\textsc{Hierarchy of active scalar equations:}
\end{center}
\begin{enumerate}[i)]
	\item Inviscid MG equation $(\varepsilon_{\nu}=0)$: \hspace{3 cm} $\mathbf{U}=\mathbf{M}[\Theta]$ \hfill \textsf{Singular order 1}
		\begin{itemize}
			\item $\epsilon_\kappa=0$: Hadamard ill-posed.
			\item $\epsilon_\kappa>0$: Critical case, globally well-posed.
		\end{itemize}
		
	\item SGQ equation (see \cite{Constantin-Majda-Tabak} and  \cite{Caffarelli-Vasseur}, \cite{Constantin-Vicol}, \cite{Kiselev-Nazarov-Volberg}): \hspace{1.7 cm} $\mathbf{U}=\nabla^{\perp}(-\Delta)^{-1/2}\Theta$ \hfill \textsf{Singular order 0}
		\begin{itemize}
			\item $\epsilon_\kappa=0$: Open.
			\item $\epsilon_\kappa>0$: Critical case, globally well-posed.
		\end{itemize}
		
	\item[iii)] Burgers equation (see \cite{Kiselev-Nazarov-Shterenberg}):	\hspace{3.6 cm} $\mathbf{U}=\Theta$ \hfill \textsf{Order 0}
	\begin{itemize}
			\item $\epsilon_\kappa=0$: Blow-up.
			\item $\epsilon_\kappa>0$: Critical case, globally well-posed.
		\end{itemize}
		
%	\item Modified SQG equation ($0<\gamma<1$):	 \hspace{2.2 cm} $\mathbf{U}=\nabla^{\perp}(-\Delta)^{-\frac{1+\gamma}{2}}\Theta$ \hfill \textsf{Smoothing degree $\gamma$}
%	\begin{itemize}
%			\item $\epsilon_\kappa=0$: \Blue{aqui que?}
%			\item $\epsilon_\kappa>0$: 
%		\end{itemize}
	\item[iv)] 2D Euler equation in vorticity form: \hspace{2.25 cm} $\mathbf{U}=\nabla^{\perp}(-\Delta)^{-1}\Theta$ \hfill \textsf{Smoothing degree 1}\\
	Globally well-posed.
	
%	\item Viscous MG equation $(\nu>0)$ \hspace{3.4 cm} $\mathbf{U}=\mathbf{M}_{\nu}[\Theta]$\hfill\textsf{Smoothing degree 2}\\
%	``Better'' than 2D Euler.
\end{enumerate}
%The viscous ($\nu>0$) MG equation, even without thermal diffusion, is “better behaved” than the
%2D Euler equation. When the viscosity of the fluid is positive, the constitutive law that relates the drift velocity  and the scalar temperature produces two orders of smoothing. In \cite{Friedlander-Suen_1} and \cite{Friedlander-Suen_2} the authors study the implications of this property.
%\Blue{quirar viscoso}

We emphasize that the mechanism producing ill-posedness is not merely the order one derivative loss in the map $\Theta \mapsto \mathbf{U}$. Rather, it is the combination of the derivative loss with the anisotropy of the symbol $\textbf{M}$ and the fact that this symbol is even. We note that the even nature of the symbol of $\textbf{M}$ plays a central role in the proof of non-uniqueness for $L^{\infty}$-weak solutions to the \textit{non-diffusive} MG equation proved in \cite{Shvydkoy}, via methods from convex integration. In contrast, an example of an active scalar equation where the map $\Theta \mapsto \mathbf{U}$ is unbounded, but given by an odd Fourier multiplier, is the generalized SQG equation where $\mathbf{U}=\nabla^{\perp}(-\Delta)^{-\frac{1-\gamma}{2}}\Theta$ and $0<\gamma\leq 1$. This equation was recently shown in \cite{Chae-Constantin-Cordoba-Gancedo-Wu} to give a locally well-posed problem in Sobolev spaces.

\subsubsection{On the lack of well-posedness for the MG equation in Sobolev spaces} Let us briefly discuss why the evenness of the operator $\textbf{M}$ breaks the classical proof of local existence in Sobolev spaces for the ($\epsilon_\kappa=0$) \textit{non-diffusive}  MG equation. To see why one may not use the standard energy-approach to obtain local well posedness, we point out that in the energy estimate for (\ref{hierarchy_active_scalar}) there are only two terms  which seem to prevent closing the estimate at the $H^s$ level: 
$$T_{bad}=\int \Lambda^s\Theta\,\Lambda^s\mathbf{U}_j\,\partial_j\Theta=\int \Lambda^s\Theta\,\Lambda^s M_j[\Theta]\,\partial_j\Theta  \qquad \text{and} \qquad T_{good}=\int \Lambda^s\Theta\,\mathbf{U}_j\,\partial_j\Lambda^s\Theta,$$
where we denoted $\Lambda := (-\Delta)^{-1/2}$. Since $\nabla\cdot\mathbf{U}=0$, upon integrating by parts we have $T_{good}=0$. On the other hand, the term $T_{bad}$ does not vanish in general. The only hope to treat the term $T_{bad}$ would be to discover a commutator structure. However, since $\textbf{M}$ is not anti-symmetric, i.e. even in Fourier space, we cannot write $T_{bad}=-\mathcal{T}$, where
$$\mathcal{T}=\int M_j\left[\Lambda^{s}\Theta\,\partial_j\Theta \right] \,\Lambda^{s}\Theta=T_{bad}+\int\left[M_j,\partial_j\Theta\right]\Lambda^{s}\Theta\,\Lambda^{s}\Theta=T_{bad}+\mathcal{S}.$$
If you could do this, a suitable commutator estimate of Coifman-Meyer type would close the estimates at the level of Sobolev spaces. Instead we have that $T_{bad}=\mathcal{T}$. This is the main reason why we are unable to close  estimates at the Sobolev level.

\subsection{Preliminares}
This section contains a few auxiliary results used in the paper. In particular, we recall the, by now classical, product and commutator estimates, as well as the Sobolev embedding inequalities. Proofs of these results can be found for instance in \cite{Kenig-Ponce-Vega},\cite{Stein} and \cite{Tao}. 

\begin{lemma}[{\bf Product estimate}]\label{product_estimate}
\label{prop:calculus}
If $s > 0$, then for all $f,g \in H^s\cap L^\infty$ we have the estimate
	\begin{align}
		&\|\Lambda^s(fg)\|_{L^2} \lesssim \left( \|f\|_{L^{\infty}} \|\Lambda^s g\|_{L^{2}} + \|\Lambda^sf\|_{L^{2}} \|g\|_{L^{\infty}}\right). \label{eq:prop:product}
	\end{align}
\end{lemma}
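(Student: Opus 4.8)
\section*{Proof proposal}

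The plan is to prove this fractional Leibniz rule via the Littlewood--Paley decomposition together with Bony's paraproduct formula. Fix a dyadic partition of unity and write $f=\sum_{j\ge-1}\Delta_j f$ and $g=\sum_{k\ge-1}\Delta_k g$, where $\Delta_j$ localizes to the frequency annulus $|\xi|\sim 2^j$ (with $\Delta_{-1}$ absorbing low frequencies), and set $S_j:=\sum_{k\le j-1}\Delta_k$. Decompose the product as $fg=T_f g+T_g f+R(f,g)$, where $T_f g:=\sum_{j}S_{j-2}f\,\Delta_j g$ is the paraproduct and $R(f,g):=\sum_{j}\Delta_j f\,\widetilde\Delta_j g$ with $\widetilde\Delta_j:=\sum_{|k-j|\le2}\Delta_k$ is the resonant remainder. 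It then suffices to control $\|\Lambda^s h\|_{L^2}^2\sim\sum_{\ell}2^{2\ell s}\|\Delta_\ell h\|_{L^2}^2$ for each of the three pieces $h$, using Bernstein's inequality, the almost-orthogonality of Littlewood--Paley blocks, and the uniform-in-$j$ bound $\|S_{j-2}f\|_{L^\infty}\lesssim\|f\|_{L^\infty}$ (the convolution kernel of $S_{j-2}$ has $L^1$-norm independent of $j$).

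For the paraproduct $T_f g$, each summand $S_{j-2}f\,\Delta_j g$ has Fourier support in an annulus $\{|\xi|\sim 2^j\}$, so the summands are almost orthogonal and
$\|\Lambda^s T_f g\|_{L^2}^2\lesssim\sum_j 2^{2js}\|S_{j-2}f\,\Delta_j g\|_{L^2}^2\lesssim\sum_j 2^{2js}\|S_{j-2}f\|_{L^\infty}^2\|\Delta_j g\|_{L^2}^2\lesssim\|f\|_{L^\infty}^2\|\Lambda^s g\|_{L^2}^2.$
By symmetry, $\|\Lambda^s T_g f\|_{L^2}\lesssim\|g\|_{L^\infty}\|\Lambda^s f\|_{L^2}$. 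These are the easy pieces.

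For the resonant term $R(f,g)$, the summand $\Delta_j f\,\widetilde\Delta_j g$ has Fourier support only in a ball $\{|\xi|\lesssim 2^j\}$, so $\Delta_\ell R$ picks out only the indices $j\ge\ell-C$, giving
$2^{\ell s}\|\Delta_\ell R\|_{L^2}\lesssim\sum_{j\ge\ell-C}2^{(\ell-j)s}\,\bigl(2^{js}\|\Delta_j f\|_{L^2}\bigr)\,\|\widetilde\Delta_j g\|_{L^\infty}.$
Bounding $\|\widetilde\Delta_j g\|_{L^\infty}\lesssim\|g\|_{L^\infty}$ and viewing the right-hand side as a discrete convolution of the $\ell^2$ sequence $a_j:=2^{js}\|\Delta_j f\|_{L^2}$ against the sequence $m\mapsto 2^{ms}\mathbf{1}_{\{m\le C\}}$, which lies in $\ell^1$ precisely because $s>0$, Young's inequality for sequences yields $\|\Lambda^s R\|_{L^2}\lesssim\|g\|_{L^\infty}\|\Lambda^s f\|_{L^2}$. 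Summing the three bounds gives \eqref{eq:prop:product}.

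The only genuinely delicate step is the resonant term: it is the sole place where the hypothesis $s>0$ is used (the series $\sum_{m\le C}2^{ms}$ diverges at $s=0$), and one must be careful to keep the high-frequency factor in $L^2$ with the $\Lambda^s$ weight while placing only the low-frequency factor in $L^\infty$, rather than the reverse; everything else is routine. Alternatively, one may simply invoke the Kato--Ponce inequality as stated in \cite{Kenig-Ponce-Vega}.
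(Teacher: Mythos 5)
Your argument is correct. Note, however, that the paper does not actually prove this lemma: it is stated in the Preliminaries as a classical product (Kato--Ponce type) estimate and the proof is delegated to the cited references \cite{Kenig-Ponce-Vega}, \cite{Stein}, \cite{Tao}. What you have written is therefore a genuine, self-contained proof where the paper offers only a citation. Your route --- Bony's decomposition $fg=T_fg+T_gf+R(f,g)$, almost orthogonality for the two paraproducts with the uniform bound $\|S_{j-2}f\|_{L^\infty}\lesssim\|f\|_{L^\infty}$, and Young's inequality for the resonant tail, which is exactly where $s>0$ enters --- is the standard modern Littlewood--Paley proof and is sound; you also correctly identify the one place where the estimate could go wrong (putting the high-frequency factor of $R$ in $L^\infty$ instead of $L^2$). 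Two small points worth a sentence in a polished write-up: the paper works on $\mathbb{T}^3$ with $\Lambda=(-\Delta)^{1/2}$ (the exponent $-1/2$ in the text is a typo), so the square-function characterization $\|\Lambda^s h\|_{L^2}^2\sim\sum_\ell 2^{2\ell s}\|\Delta_\ell h\|_{L^2}^2$ requires the usual remark about the $\Delta_{-1}$ block and the zero mode, which on the torus is harmless since nonzero frequencies are bounded below; and the original references prove the estimate by different means (Coifman--Meyer multiplier techniques in \cite{Kenig-Ponce-Vega}), so your paraproduct argument is an alternative, arguably more transparent, derivation of the same inequality.
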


In the case of a commutator we have the following estimate.

\begin{lemma}[{\bf Commutator estimate}]\label{commutator_estimate}
	Suppose that $s >0$. Then for all  $f,g \in \mathcal{S}$ we have the estimate
	\begin{align}
		&\|\Lambda^s(fg) - f \Lambda^{s} g \|_{L^2} \lesssim \left(\| \nabla f\|_{L^{\infty}}\|\Lambda^{s-1} g\|_{L^{2}} + \|\Lambda^s f\|_{L^{p}}\|g\|_{L^{p'}} \right) \label{eq:prop:commutator}
	\end{align}
	where $\tfrac{1}{2}=\tfrac{1}{p}+\tfrac{1}{p'}$ and $p\in (1,\infty)$.
\end{lemma}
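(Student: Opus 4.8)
The plan is to run the classical Littlewood--Paley / Bony paraproduct argument, the only delicate ingredient being the extraction of the commutator cancellation in the low--high interaction. Fix standard dyadic frequency projections $\Delta_j$, let $S_j=\sum_{k\le j}\Delta_k$, and split the commutator $\Lambda^s(fg)-f\,\Lambda^s g=[\Lambda^s,f]g$ as
\[
[\Lambda^s,f]g \;=\; \sum_{j}\bigl[\Lambda^s,S_{j-2}f\bigr]\Delta_j g \;+\; \sum_{k}\bigl[\Lambda^s,\Delta_k f\bigr]S_{k-2}g \;+\; \sum_{|k-j|\le 1}\bigl[\Lambda^s,\Delta_k f\bigr]\Delta_j g \;=:\; \mathrm{I}+\mathrm{II}+\mathrm{III},
\]
according to whether the frequency of $f$ is much smaller than, much larger than, or comparable to that of $g$.

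Term $\mathrm{I}$ is where the commutator structure is essential. For each $j$ the summand $[\Lambda^s,S_{j-2}f]\Delta_j g$ is Fourier-supported in the annulus $\{|\xi|\sim 2^j\}$ and equals $[\Lambda^s\widetilde\Delta_j,S_{j-2}f]\Delta_j g$ for a suitably fattened projection $\widetilde\Delta_j$, where $\Lambda^s\widetilde\Delta_j$ has convolution kernel $\psi_j$ obeying $\bigl\||\cdot|\,\psi_j\bigr\|_{L^1}\lesssim 2^{(s-1)j}$ by scaling. Writing
\[
\bigl[\Lambda^s,S_{j-2}f\bigr]\Delta_j g\,(x)=\int \psi_j(x-y)\,\bigl(S_{j-2}f(x)-S_{j-2}f(y)\bigr)\,\Delta_j g(y)\,dy
\]
and using $|S_{j-2}f(x)-S_{j-2}f(y)|\le|x-y|\,\|\nabla S_{j-2}f\|_{L^\infty}\lesssim|x-y|\,\|\nabla f\|_{L^\infty}$, Young's inequality yields $\|[\Lambda^s,S_{j-2}f]\Delta_j g\|_{L^2}\lesssim 2^{(s-1)j}\|\nabla f\|_{L^\infty}\|\Delta_j g\|_{L^2}$. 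Since the summands are supported in separated annuli, almost orthogonality gives
\[
\|\mathrm{I}\|_{L^2}^2\lesssim \|\nabla f\|_{L^\infty}^2\sum_j 2^{2(s-1)j}\|\Delta_j g\|_{L^2}^2\sim \|\nabla f\|_{L^\infty}^2\,\|\Lambda^{s-1}g\|_{L^2}^2,
\]
i.e.\ the first term on the right-hand side. The point is that the difference $\Lambda^s(S_{j-2}f\,\Delta_j g)-S_{j-2}f\,\Lambda^s\Delta_j g$ converts an apparent loss of $s$ derivatives into a gain of one derivative on $f$.

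For $\mathrm{II}$ and $\mathrm{III}$ there is no cancellation to use, and one estimates the two pieces of each commutator separately. In $\mathrm{II}$ the summand is Fourier-supported in $\{|\xi|\sim 2^k\}$, so by almost orthogonality, Bernstein's inequality, and Hölder with $\tfrac12=\tfrac1p+\tfrac1{p'}$ one obtains $\|\mathrm{II}\|_{L^2}\lesssim\|g\|_{L^{p'}}\bigl(\sum_k 2^{2sk}\|\Delta_k f\|_{L^p}^2\bigr)^{1/2}$; term $\mathrm{III}$ is analogous once the output frequency is restored by a low projection $\Delta_\ell$ with $\ell\lesssim k$, the sum over scales being summable because $s>0$ (a Schur test). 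To recognize the $f$-sum as $\|\Lambda^s f\|_{L^p}$ one uses Plancherel when $p=2$; for general $p\in(1,\infty)$ the clean route is to treat $\mathrm{II}+\mathrm{III}$ as a single bilinear Fourier multiplier supported where the $f$-frequency dominates, with symbol $|\eta|^{s}$ times a Coifman--Meyer symbol, and to apply the Coifman--Meyer multiplier theorem for the Hölder triple $(p,p',2)$, which gives $\lesssim\|\Lambda^s f\|_{L^p}\|g\|_{L^{p'}}$ with no Besov-space detour; alternatively one may feed the reorganized output into the product estimate of Lemma~\ref{product_estimate}.

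I expect the main obstacle to be the clean treatment of $\mathrm{I}$: the uniform kernel bound $\||\cdot|\,\psi_j\|_{L^1}\lesssim 2^{(s-1)j}$ together with the almost orthogonality, and, when $s$ is not an integer (in particular $s<1$), making sure the homogeneous factor $|\xi|^{s}$ does not create a spurious low-frequency divergence in $\sum_j 2^{2(s-1)j}\|\Delta_j g\|_{L^2}^2$; for $g\in\mathcal{S}$ this is a routine convergence check, and on the periodic domain $\T^3$ used in the rest of the paper the low frequencies are bounded below, so the issue does not arise.
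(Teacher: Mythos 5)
The paper does not actually prove Lemma~\ref{commutator_estimate}: it is stated as one of the ``by now classical'' preliminaries and the proof is deferred to the cited references (Kenig--Ponce--Vega, Stein, Tao). So your proposal is not competing with an in-paper argument; it is a reconstruction of the standard Littlewood--Paley proof of the Kato--Ponce/Kenig--Ponce--Vega commutator estimate, and as a reconstruction it is essentially sound. The treatment of the low--high term $\mathrm{I}$ is correct and complete in outline: the identity $[\Lambda^s,S_{j-2}f]\Delta_j g=[\Lambda^s\widetilde\Delta_j,S_{j-2}f]\Delta_j g$, the kernel bound $\||\cdot|\,\psi_j\|_{L^1}\lesssim 2^{(s-1)j}$ by scaling, the mean-value/Young step, and almost orthogonality give exactly the $\|\nabla f\|_{L^\infty}\|\Lambda^{s-1}g\|_{L^2}$ contribution; you also correctly flag the only real subtlety there (low frequencies when $s<1$, harmless on $\T^3$ minus the zero mode).

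The one place where your argument is asserted rather than carried out is the passage from $\bigl(\sum_k 2^{2sk}\|\Delta_k f\|_{L^p}^2\bigr)^{1/2}$ to $\|\Lambda^s f\|_{L^p}$ in $\mathrm{II}+\mathrm{III}$. As you note, this Besov-to-Sobolev step is immediate only for $p=2$; for $p>2$ the embedding goes the wrong way, and this matters for the paper, which applies the lemma with $p=6$, $p'=3$. Your proposed fix --- factor $\Lambda^s$ out of $f$ and treat $\mathrm{II}+\mathrm{III}$ as a bilinear multiplier handled by Coifman--Meyer --- is the standard remedy, but be aware that the symbol is not literally $|\eta|^s$ times a Coifman--Meyer symbol (that product is unbounded); what one must check is that the \emph{normalized} symbol $|\xi+\eta|^s\chi(\xi,\eta)/|\xi|^s$, supported where the $f$-frequency dominates, satisfies the Coifman--Meyer derivative bounds, which requires some care near $\xi+\eta=0$ for non-integer $s$. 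That verification is precisely the nontrivial content of the cited references, so invoking it is legitimate here, but in a self-contained write-up it is the step you would need to supply. Alternatively, since every application in this paper ultimately lands in $L^2$ of the output, you could sidestep the issue by proving only the version with the Besov-type quantity $\|f\|_{\dot B^s_{p,2}}$ on the right, which suffices for all the estimates in Sections~\ref{Sec_3} and~\ref{Sec_4} after one extra Sobolev embedding.
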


Moreover, the following Sobolev embeddings holds:
\begin{itemize}
	\item $W^{s,p}(\T^d)\subset L^q(\T^d)$ continuosly if $s<d/p$ and $p\leq q \leq dp/(d-sp)$.
		
	\item $W^{s,p}(\T^d)\subset C^k(\overline{\T^d})$ continuosly if $s>k+d/p$.
\end{itemize}
\vspace{0.2 cm}

\subsection{Notation \& Organization:}
To avoid clutter in computations, function arguments (time and space) will be omitted whenever they are obvious from context. Finally, we use the notation $f\lesssim g$ when there exists a constant $C>0$ independent of the parameters of interest such that $f\leq Cg$.\vspace{0.2 cm}

\noindent
%\textbf{Organization of the paper:}
In Section \ref{Sec_2} we begin by setting up the perturbated problem around a specific  steady state. Then, we state a less technical version of the main theorem and we give some of the ideas behind the proof. Here, we collect some useful technical lemmas about the behaviour of $\widehat{\textbf{M}}$ on suitable subsets of the frequency domain. In Section \ref{Sec_3} we embark on the proof of a local existence result for frequency-localized initial data following the ideas of \cite{Friedlander-Rusin-Vicol_fractional}. The core of the article is the proof of the main theorem in Section \ref{Sec_4}. We start by the \textit{a priori} energy estimates given in Section \ref{Sec_4.1}. This is followed by an explanation of the decay given by the linear semigroup in Section \ref{Sec_4.2}. Finally, in Section \ref{Sec_4.3} we exploit a bootstrapping argument to prove our theorem.\vspace{0.2 cm}

\section{The perturbated system}\label{Sec_2}

With all the above in mind, we seek to find a steady state around which the \textit{non-diffusive} MG equation is well-posed. Following the same idea used initially in \cite{Friedlander-Rusin-Vicol_fractional}, we take advantage of the anisotropy of the symbols $\textbf{M}=(M_1,M_2,M_3)$ given by (\ref{U_M[theta]}) and observe an interesting phenomenon: when the initial perturbation  is localized in the frequency space, it is possible to prove a well-posedness result for the ensuing solution.

If the frequency of the initial perturbation of the steady state lies on a suitable region of the Fourier space, then the operator $\textbf{M}$ behaves like an order zero opeartor, and hence the corresponding velocity is as smooth as the advected scalar. This enables us to obtain a well-posedness result over the generic setting when no conditions of the Fourier spectrum of the initial perturbation are imposed.

\subsection{The steady states} When studying fluid equations, it is often helpful to have a good understanding of the exact steady states of the system. The kinds of exact solutions we are interested are the simplest possible steady state, namely $\mathbf{U}=0$ and $\Theta_0=\Omega(x_3)$ for some function $\Omega$ with $\int_{\mathbb{R}}\Omega(x_3)\, dx_3=0$.\vspace{0.2 cm}

The basic problem is to consider $\Theta_0$ a given equilibrium state  and to study the dynamics of solutions which are close to it in a suitable sense. Now, we write the scalar and the velocity as
\begin{align*}
\Theta(\mathbf{x},t)&=\Omega(x_3)+\theta(\mathbf{x},t),\\
U(\mathbf{x},t)&=\phantom{\Omega(x_3)+ } \,\, \mathbf{u}(\mathbf{x},t),
\end{align*}
and the pressure term is written in a more convenient way as
$$P(\mathbf{x},t)=\Omega(0)+\int_{0}^{x_3}\Omega(s)\,ds+p(\mathbf{x},t).$$
Then, putting this ansatz in (\ref{mhd_componentes}) we obtain
\begin{equation}\label{mhd_componentes_new}
\left\{
\begin{array}{rl}
u_1&=-D^{-1}\left(\partial_2 p+\Gamma\partial_1 p \right),\\
u_2&=\phantom{-}D^{-1}\left(\partial_1 p-\Gamma\partial_2 p \right),\\
\partial_3 u_3 &=\phantom{-}D^{-1}\Gamma\Delta_{H}p,\\
\partial_3 \theta&=\phantom{-} \left(\Gamma^2\Delta_H D^{-1}+\partial_{3}^2\right)p.
\end{array}
\right.
\end{equation}

As before, in order to uniquely determine $u_3$ and $\theta$ from (\ref{mhd_componentes_new}), we restrict the system to the function spaces where $u_3$ and $\theta$ have zero vertical mean. Hence, we can integrate the last equation of the previous system and use the zero vertical mean assuption to obtain that
$$\theta(\mathbf{x},t)=A[p](\mathbf{x},t).$$
\noindent
\textbf{Remark:} If we impose that  $\theta(\mathbf{x},t)$ and $p(\mathbf{x},t)$ are periodic functions in the three variables $\mathbf{x}=(x_1,x_2,x_3)$, then $A$ is invertible on the space of functions with zero  $x_3$-mean and has an expression as a Fourier multiplier.\vspace{0.2 cm}

For periodic perturbations in the three variables, the operator $A$ is a Fourier multiplier with symbol
\begin{equation*}
\hat{A}(\mathbf{k})=\frac{1}{i k_3}\,\frac{k_3^2 |\mathbf{k}|+k_2^4}{|\mathbf{k}|^4+k_2^4 },
\end{equation*}
where the Fourier variable $\mathbf{k}\in\Z^3_{\star}:=\Z^3\setminus\{k_3=0\}$, by our vertical mean-free assumption.
After that, we can use (\ref{mhd_componentes_new}) and (\ref{mhd_linear}) to represent $\mathbf{u}$ and $\mathbf{b}$ in terms of $\theta$ via
\begin{equation}\label{ref_velocity}
u_j=M_j[\theta] \qquad \text{and} \qquad \textbf{b}_j=(-\Delta)^{-1}\partial_2 M_j[\theta] \qquad \text{for} \quad j\in\left\lbrace 1,2,3\right\rbrace.
\end{equation}
Note that the operators $\left\lbrace M_j \right\rbrace_{j=1}^{3}$ are Fourier multipliers with symbols given explicity for $\mathbf{k}\in\Z^3_{\star}$ by
\begin{equation*}
\widehat{M}_1(\mathbf{k}) := \frac{k_2 k_3 |\mathbf{k}|^2 -  k_1 k_2^{2} k_3}{ k_3^{2} |\mathbf{k}|^2 +  k_2^{4}}, \qquad \widehat{M}_2(\mathbf{k}) := \frac{-k_1 k_3 |\mathbf{k}|^2 -  k_2^{3}k_3}{ k_3^{2} |\mathbf{k}|^2 +  k_2^{4}},
\qquad \widehat{M}_3(\mathbf{k}) := \frac{ k_2^{2}(k_1^{2} + k_2^{2})}{ k_3^{2} |\mathbf{k}|^2 +  k_2^{4}}.
\end{equation*}
On $\{k_3=0\}$ we let $\widehat{M}_j(\mathbf{k})=0$, since for consistency of the model we have that $\theta$ and $U_3$ have zero $x_3$-mean. It can be directly checked that $k_{j} \cdot \widehat{M}_j (\mathbf{k}) = 0$ and hence the velocity field $\mathbf{u}$ given by (\ref{ref_velocity}) is divergence-free.\vspace{0.2 cm}

\subsubsection{The Fourier multiplier operator}
We study  the properties and behavior of the Fourier mulplier operator $\textbf{M}$ obtained from (\ref{ref_velocity}), which relates $\mathbf{u}$ and $\theta$. It is important to note that although the symbols $\widehat{M}_{j}$ are zero-order homogenous under the isotropic scaling $\mathbf{k} \to \lambda \mathbf{k}$, due to their anisotropy the symbols $\widehat{M}_j$ are {not} bounded functions of $\mathbf{k}$. In fact, it may be shown that $|\widehat{\textbf{M}}(\mathbf{k})| \lesssim  |\mathbf{k}|$ and this bound is sharp. To see this, note that whereas in the region of Fourier space where $|k_{1}| \leq \max \{ |k_{2}|, |k_{3}|\}$ the $\widehat{M}_j$ are bounded by a constant, uniformly in $|\mathbf{k}|$, this is not the case on the ``curved'' frequency regions where $k_{3} = {\mathcal O}(1)$ and $k_{2} = {\mathcal O}(|k_{1}|^{r})$, with $0< r \leq 1/2$. In such regions the symbols are unbounded, since as $|k_{1}|\rightarrow \infty$ we have:
\begin{align}
|\widehat{M}_{1}(k_{1},|k_{1}|^{r},1)| \approx |k_{1}|^{r},\quad  |\widehat{M}_{2}(k_{1},|k_{1}|^{r},1)| \approx |k_{1}|,\quad  |\widehat{M}_{3}(k_{1},|k_{1}|^{r},1)| \approx |k_{1}|^{2r}. \label{eq:unbounded:symbol}
\end{align}

Several important properties of the $\widehat{M}_j$'s are immediately obvious:
\begin{enumerate}[i)]
	\item The functions are strongly anisotropic with respect to the dependence on the integers $k_1, k_2$, and $k_3$. This is a consequence of the interplay of the three physical forces governing this
system:
		\begin{itemize}
			\item Coriolis force,
			\item Lorentz force,
			\item Gravity.
\end{itemize}		 

	\item Since the symbols $\widehat{M}_j$ are even the operator $\textbf{M}$ is not anti-symmetric.
\end{enumerate}
These properties of $\widehat{\textbf{M}}$ make the MG equation interesting and challenging mathematically, as well as having a clear physical basis in its derivation from the MHD equations.\vspace{0.2 cm}

Finally, after put our ansatz in (\ref{active_scalar}), we arrive to the following system:
\begin{equation}\label{Omega_general}
\left\{
\begin{array}{rl}
\partial_t \theta(\mathbf{x},t) +\mathbf{u}(\mathbf{x},t)\cdot\nabla\theta(\mathbf{x},t)&=-\epsilon_\kappa \,\Delta\theta(\mathbf{x},t)-\epsilon_\kappa \,\Omega''(x_3)-u_3(\mathbf{x},t)\Omega'(x_3),\\
\mathbf{u}(\mathbf{x},t)&=M[\theta](\mathbf{x},t),\\
\theta(\mathbf{x},0)&=\theta_0(\mathbf{x}),
\end{array}
\right.
\end{equation}
where our initial data $\theta_0$ has zero vertical mean.\vspace{0.2 cm}

\noindent
\textbf{Remark:} Here $\mathbf{x}=(x_1,x_2,x_3)\in \T^2\times\R$, however $\theta(\mathbf{x},t)$ and $\mathbf{u}(\mathbf{x},t)$ are periodic in the three variables.\vspace{0.2 cm}

For the case $\Omega\equiv 0$, the system (\ref{Omega_general}) is again the one widely studied in \cite{Friedlander-Rusin-Vicol,Friedlander-Rusin-Vicol_fractional, Friedlander-Vicol_1, Friedlander-Vicol_3, Friedlander-Vicol_2}. The aim of this paper is to show that the  Cauchy problem for the \textit{non-diffusive} MG equation is  well-posed with respect to perturbations around a specific steady profile $\Omega$, in the topology of a certain Sobolev space.

\subsection{The perturbated \textit{non-diffusive} MG equation}
We fix the perturbation $\theta(\mathbf{x},t):=\Theta(\mathbf{x},t)-\nolinebreak\Omega(x_3)$. Therefore, we obtain the system:
\begin{equation}\label{problema_perturbado}
\left\{
\begin{array}{rl}
\partial_t\theta(\mathbf{x},t) +\mathbf{u}(\mathbf{x},t)\cdot\nabla\theta(\mathbf{x},t) &= -u_3(\mathbf{x},t)\Omega'(x_3),  \\
\mathbf{u}(\mathbf{x},t)&=\phantom{-}M[\theta](\mathbf{x},t),\\
\theta(\mathbf{x},0)&=\phantom{-}\theta_0(\mathbf{x}),
\end{array}
\right.
\end{equation}
with $\mathbf{x}\in\T^3$ and where our initial data $\theta_0$ has zero vertical mean.\vspace{0.2 cm}

What is interesting about this equation is that $M_3$ is a positive operator so we get a mild dissipation effect. This structure will allow us to prove stability. So, just as for the fractional Laplacian, we define the square root of $M_3$ via  Fourier transform as follows:

\begin{defi}\label{sqrt_M3}
The square root of $M_3$ can be defined on functions $f:\T^3\rightarrow \R$ with zero vertical mean as a Fourier multiplier given by the formula:
\begin{equation}\label{square_root_M3}
\widehat{\sqrt{M_3}\,[f]}(\mathbf{k}):= \sqrt{\frac{ k_2^{2}(k_1^{2} + k_2^{2})}{k_3^{2} |\mathbf{k}|^2 +  k_2^{4}}}\,\hat{f}(\mathbf{k}) \qquad \mathbf{k}\in\Z_{\star}^{3}.
\end{equation}
Note that $\widehat{\sqrt{M_3}}(\mathbf{k})$ is not defined on $k_3=0$ since for the self-consistency of the model, we only work with periodic functions with zero vertical mean.
\end{defi}

\subsubsection{The main theorem}
In this work, we are interested in the perturbative regime near the special steady state $\Omega(x_3):=x_3$. The main achievement of the paper is a local existence result for periodic perturbations localized in a suitable section of the frequency space together with a global existence result under an additional size condition over the $H^{5/2}(\mathbb{T}^3)$ norm of the perturbation.\vspace{0.2 cm}
 %Moreover, the solution decays exponential fast to the steady state.

To sum up, we want to consider solutions in $\mathbf{x}\in\T^2\times\mathbb{R}$ and $t\geq 0$ with the structure
$$\Theta(\mathbf{x},t):=\Omega(x_3)+\theta(\mathbf{x},t),$$
for periodic perturbations $\theta(\mathbf{x},0):=\theta_0(\mathbf{x})\hspace*{-0.05 cm}\in \hspace*{-0.05 cm} H^s(\T^3)$ with zero vertical mean and frequency support in $\mathrm{X}\hspace*{-0.05 cm} \subset \hspace*{-0.05 cm} \Z^3$.\linebreak
Then, we prove:
\begin{itemize}
	\item \textbf{Local well-posedness:} \,  If $s>\tfrac{5}{2}$.
	\item \textbf{Global well-posedness:}  If $s>\tfrac{5}{2}$ and $||\theta_0||_{H^{5/2^{+}}}$ is small enough.
	\item \textbf{GWP \& asymptotic stability:}  If $s>\tfrac{7}{2}$ and $||\theta_0||_{H^{7/2^{+}}}$ is small enough.
\end{itemize}
A precise statement of our result is presented as Theorem \ref{main_thm}, where we
also illustrate its proof through a bootstrap argument. Despite the apparent simplicity, understanding the stability of this flow is  non-trivial.\vspace{0.2 cm}

\subsubsection{The ideas behind the proof:}
In order to prove this, first we fix our attention in the study of the stability of the problem, when linearized it around a particular steady state $\Omega(x_3)\equiv x_3$.  The main mechanism of decay can be seen from the linearized equation:
\begin{equation*}
\left\{
\begin{array}{rl}
\partial_t\theta(\mathbf{x},t)&=-M_3[\theta](\mathbf{x},t),  \\
\theta(\mathbf{x},0)&=\phantom{-}\theta_0(\mathbf{x}).
\end{array}
\right.
\end{equation*}
%$$\partial_t\theta(\mathbf{x},t)=-M_3[\theta](\mathbf{x},t).$$
As $\widehat{M_3}(\mathbf{k})$ is a positive operator for $\mathbf{k}\in\Z^3_{\star}\equiv \Z^3\setminus\{k_3=0\}$ there is a unique positive self-adjoint square root operator of $\widehat{M_3}(\mathbf{k})$ on $\Z^3_{\star}$, which we define in (\ref{square_root_M3}). In consequence, the linearized equaiton clearly shows the decay
over time of $\theta(\mathbf{x},t)$, except for  the zero mode in $x_3$. However, we do not have that problem because for self-consistency of the model we restrict to functions that have zero vertical mean.

Hence, the main achievement of the paper is thus to control the nonlinearity, so that it would not destroy the decay provided by the linearized equation. Note that, over the curved frequency regions  where $k_{3} = {\mathcal O}(1)$ and $k_{2} = {\mathcal O}(|k_{1}|^{r})$ with $0< r \leq 1/2$, we have that $\mathbf{u}(\mathbf{x},t)\approx \Lambda\theta(\mathbf{x},t)$ and $M_3[\theta](\mathbf{x},t)\approx \Lambda^{2r}\theta(\mathbf{x},t)$ with $0<r\leq 1/2$ and we can not control and close the estimates at the level of Sobolev spaces. 
%we have: 
%\begin{align*}
%|\widehat{M}_{1}(k_{1},|k_{1}|^{r},1)| \approx |k_{1}|^{r},\quad  |\widehat{M}_{2}(k_{1},|k_{1}|^{r},1)| \approx |k_{1}|,\quad |\widehat{M}_{3}(k_{1},|k_{1}|^{r},1)| \approx |k_{1}|^{2r} \qquad \text{as} \quad |k_{1}|\rightarrow \infty. 
%\end{align*}
%So, we are in the same situation as before because$\mathbf{u}(\mathbf{x},t)\approx \Lambda\theta(\mathbf{x},t)$ and $M_3[\theta](\mathbf{x},t)\approx \Lambda^{2r}\theta(\mathbf{x},t)$ with $0<r\leq 1/2$ and we can not control and close the estimates at the level of Sobolev spaces.
But as in \cite{Friedlander-Rusin-Vicol_fractional}, we explore the following observation: if the frequency  support  of $\theta(\mathbf{x},t)$ lies on a suitable section of the Fourier space, then the operator $\mathbf{M}$ behaves like an order zero operator and hence the corresponding velocity $\mathbf{u}(\mathbf{x},t)$ is as smooth as $\theta(\mathbf{x},t)$. 
This enables us to obtain a well-posedness results over the generic setting when no conditions on the Fourier spectrum of the initial perturbation $\theta_0$ are imposed. To be more precise, we consider an appropiate subset $\mathrm{X}\subset\Z^3$ which we will define later, where we can obtain a local well-posedness result for perturbations  $\theta_0(\mathbf{x})$ such that $\text{supp}(\widehat{\theta_0}(\mathbf{k}))\subset \mathrm{X}$.

Under this hypothesis over the initial perturbation, at least morally speaking, our perturbated system behaves like an active scalar of order zero with a damping term:
\begin{equation*}
\left\{
\begin{array}{rl}
\partial_t\theta(\mathbf{x},t)+\mathbf{u}(\mathbf{x},t)\cdot\nabla\theta(\mathbf{x},t)&=-\theta(\mathbf{x},t),  \\
%\mathbf{u}(\mathbf{x},t)&=M[\theta](\mathbf{x},t),\\
\theta(\mathbf{x},0)&=\phantom{-}\theta_0(\mathbf{x}),
\end{array}
\right.
\end{equation*}
%$$\partial_t\theta(\mathbf{x},t) +\mathbf{u}(\mathbf{x},t)\cdot\nabla\theta(\mathbf{x},t) = -\theta(\mathbf{x},t),$$
with $\mathbf{u}(\mathbf{x},t)=M[\theta](\mathbf{x},t)$. However, as $\lim_{\alpha\to 0}\Lambda^{\alpha}\theta(\mathbf{x},t)=-\theta(\mathbf{x},t)$, the type of results obtained for the supercritical diffusive MG equation in \cite{Friedlander-Rusin-Vicol_fractional} are expected to have also in our setting.

\subsubsection{Well-prepared initial data $\theta_0$}
In this section we explore the  observation cited above: if the frequency support of $\theta$ lies on a suitable subset of the frecuency space, then the operator $\textbf{M}$ is \textit{mild} when it acts on $\theta$, i.e. it behaves like an order zero operator, and hence the corresponding velocity $\mathbf{u}$ is as smooth as $\theta$. 

This enables us to obtain a well-posedness result over the generic setting when no conditions on the Fourier spectrum of the initial perturbation are imposed. For instance, the local existence and uniqueness of smooth solutions holds for the non-diffusive case, a setting in which we know that for generic initial data the problem is ill-posed in Sobolev spaces.\vspace{0.2 cm}

For the perturbated problem, we are working in the periodic setting $\T^3$ and the frequency space is $\Z^3$. Then, we can define the frequency straight lines $\mathrm{L}(\mathbf{q})$ crossing the origin as the set:
\begin{equation*}\label{plane_p_q}
\mathrm{L}(\mathbf{q}):=\Z^3 \cap \left\lbrace (q_1 k,q_2 k,q_3 k):k\in\Z\right\rbrace \qquad \text{for}\quad  \mathbf{q}:=(q_1,q_2,q_3)\in\mathbb{Q}^3\setminus\{(0,0,0)\}.
\end{equation*}
Now, we will said that $\mathbf{q}\in\mathbb{Q}^3\setminus\{(0,0,0)\}$ is an admissible triple if there exists $\mathfrak{C}>0$ such that $\mathbf{q}\in \mathrm{K}_{\mathfrak{C}}$, where $\mathrm{K}_{\mathfrak{C}}$ is the rational cone defined by
\begin{equation}\label{cone}
\mathrm{K}_{\mathfrak{C}}:=\left\lbrace\mathbf{q}\in\Q^3\setminus\{(0,0,0)\}: |q_1|,|q_3|\leq \mathfrak{C}|q_2|\right\rbrace.
\end{equation}
The next lemma states that $\textbf{M}$ behaves like a zero order operator when it acts on functions with frequency support in $\mathrm{L}(\mathbf{q})$ with $\mathbf{q}\in \mathrm{K}_{\mathfrak{C}}$ for some $\mathfrak{C}>0$. In the rest, we shall make key use of the next properties of $\textbf{M}$.\vspace{0.2 cm}

In the rest of the paper, fixed $\mathfrak{C}>0$ we assume that $\mathrm{X}_{\mathfrak{C}}\in\left\lbrace \{\mathrm{L}(\mathbf{q})\} :q\in \mathrm{K}_{\mathfrak{C}}\right\rbrace$. This is, in the following for $\mathrm{X}_{\mathfrak{C}}$ we will understand one of the previously defined frequency straight lines

\begin{lemma}\label{M_order0}
Let $\mathfrak{C}>0$. For every smooth periodic function $f:\mathbb{T}^3\rightarrow \R$ with zero vertical mean and frequency support in $\mathrm{X}_{\mathfrak{C}}$, there exists a universal constant $\mathfrak{m}^{\star}=\mathfrak{m}^{\star}(\mathfrak{C})>0$ such that:
$$\big|\widehat{M_j[f]}(\mathbf{k})\big|=\big| \widehat{M_j}(\mathbf{k})\, \hat{f}(\mathbf{k})\big|\leq \mathfrak{m}^{\star}\big|\hat{f}(\mathbf{k})\big| \qquad \text{with} \quad j\in\{1,2,3\}$$
for all $\mathbf{k}\in\Z^3$. Moreover, the constant $\mathfrak{m}^{\star}$ blow-up as $\mathfrak{C}$ tends to infinity.
\end{lemma}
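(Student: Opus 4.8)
The plan is to estimate each symbol $\widehat{M_j}(\mathbf{k})$ directly on the support $\mathrm{X}_{\mathfrak{C}} = \mathrm{L}(\mathbf{q})$ with $\mathbf{q} = (q_1,q_2,q_3) \in \mathrm{K}_{\mathfrak{C}}$. A nonzero frequency $\mathbf{k}$ in $\mathrm{L}(\mathbf{q})$ has the form $\mathbf{k} = (q_1 k, q_2 k, q_3 k)$ for some $k \in \Z \setminus \{0\}$ (after clearing denominators we may take $\mathbf{q}$ itself to be an integer vector, which only changes $\mathfrak{C}$ by a bounded factor). Since every symbol $\widehat{M_j}$ is homogeneous of degree zero under the isotropic scaling $\mathbf{k} \mapsto \lambda \mathbf{k}$, we have $\widehat{M_j}(\mathbf{k}) = \widehat{M_j}(\mathbf{q})$, so it suffices to bound $|\widehat{M_j}(\mathbf{q})|$ by a constant depending only on $\mathfrak{C}$, uniformly over $\mathbf{q} \in \mathrm{K}_{\mathfrak{C}}$.

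First I would record that the common denominator $q_3^2 |\mathbf{q}|^2 + q_2^4$ is bounded below in the right way: on $\mathrm{K}_{\mathfrak{C}}$ the constraint $|q_1|, |q_3| \le \mathfrak{C} |q_2|$ forces $|\mathbf{q}|^2 = q_1^2 + q_2^2 + q_3^2 \le (1 + 2\mathfrak{C}^2) q_2^2$, hence $q_2^4 \ge (1 + 2\mathfrak{C}^2)^{-2} |\mathbf{q}|^4$, and therefore $q_3^2 |\mathbf{q}|^2 + q_2^4 \ge (1+2\mathfrak{C}^2)^{-2} |\mathbf{q}|^4$. Then I would bound each numerator by a multiple of $|\mathbf{q}|^4$ using the same inequalities: for $\widehat{M_3}$, $|q_2^2(q_1^2 + q_2^2)| \le |\mathbf{q}|^2 \cdot |\mathbf{q}|^2 = |\mathbf{q}|^4$; for $\widehat{M_1}$, $|q_2 q_3 |\mathbf{q}|^2 - q_1 q_2^2 q_3| \le |q_2||q_3||\mathbf{q}|^2 + |q_1| q_2^2 |q_3| \le \mathfrak{C} q_2^2 |\mathbf{q}|^2 + \mathfrak{C}^2 q_2^4 \lesssim_{\mathfrak{C}} |\mathbf{q}|^4$ after using $q_2^2 \le |\mathbf{q}|^2$; and analogously for $\widehat{M_2}$, where $|{-q_1 q_3 |\mathbf{q}|^2 - q_2^3 q_3}| \le \mathfrak{C}^2 q_2^2 |\mathbf{q}|^2 + \mathfrak{C} q_2^4 \lesssim_{\mathfrak{C}} |\mathbf{q}|^4$. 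Dividing, each $|\widehat{M_j}(\mathbf{q})|$ is bounded by a constant $\mathfrak{m}^\star = \mathfrak{m}^\star(\mathfrak{C})$ of the form $C (1+2\mathfrak{C}^2)^2 (1 + \mathfrak{C} + \mathfrak{C}^2)$, which I would take as the explicit constant; multiplying through by $|\hat f(\mathbf{k})|$ and noting the bound is trivially true on $\{k_3 = 0\}$ (where $\widehat{M_j} = 0$ and, for $f$ supported in $\mathrm{X}_{\mathfrak{C}}$, also $\hat f = 0$ unless $\mathbf{k} = 0$) gives the claim for all $\mathbf{k} \in \Z^3$. The monotone dependence $\mathfrak{m}^\star \to \infty$ as $\mathfrak{C} \to \infty$ is visible from this formula, and it is also forced by the sharpness statement (\ref{eq:unbounded:symbol}): the curved regions $k_2 = \mathcal{O}(|k_1|^r)$ correspond to directions with $|q_2| \ll |q_1|$, which exit every fixed cone $\mathrm{K}_{\mathfrak{C}}$, so no uniform-in-$\mathfrak{C}$ bound can hold.

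There is no serious obstacle here; the only point requiring a little care is the reduction from a frequency line $\mathrm{L}(\mathbf{q})$ with $\mathbf{q} \in \Q^3$ to an integer direction vector and the observation that homogeneity of degree zero lets us evaluate the symbol at the direction rather than at $\mathbf{k}$ itself — once that is in place the estimate is a one-line consequence of the cone inequality $|q_1|^2, |q_3|^2 \le \mathfrak{C}^2 q_2^2$. I would present the computation for $\widehat{M_3}$ in full (it is the cleanest and is the operator that matters for the dissipation), then remark that $\widehat{M_1}$ and $\widehat{M_2}$ are handled identically, collecting the worst constant as $\mathfrak{m}^\star(\mathfrak{C})$.
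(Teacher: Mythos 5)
Your proposal is correct and follows essentially the same route as the paper: reduce to the direction vector $\mathbf{q}$ of the frequency line (the paper writes $\mathbf{k}=k\cdot\mathbf{q}$ and uses the degree-zero homogeneity implicitly), then bound each symbol $\widehat{M_j}$ by elementary use of the cone inequalities $|q_1|,|q_3|\leq\mathfrak{C}|q_2|$, taking $\mathfrak{m}^{\star}$ as the maximum of the three resulting constants. The only cosmetic difference is that you normalize numerator and denominator by $|\mathbf{q}|^4$ whereas the paper keeps the factor $q_2^4/(q_3^2|\mathbf{q}|^2+q_2^4)\leq 1$; the estimates and the resulting $\mathfrak{C}$-dependence are the same.
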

\begin{proof}
It is clear that the bound has to be proven only for $\mathbf{k}\in \Z^3_{\star}$, since otherwise we have that $\hat{f}(\mathbf{k})=0$ and the statement holds trivially. Note that $\mathbf{k}\in \mathrm{X}_{\mathfrak{C}}$ implies that $\mathbf{k}=k\cdot\mathbf{q}$ with $\mathbf{q}\in \mathrm{K}_{\mathfrak{C}}$ and some $k\in\Z$. We now consider each of the cases $j\in\{1,2,3\}$.
\begin{itemize}
	\item For $j=1$, a short algebraic computation gives
	$$\big|\widehat{M}_1(\mathbf{k})\big| = \left|\frac{k_2 k_3 |\mathbf{k}|^2 -  k_1 k_2^{2} k_3}{ k_3^{2} |\mathbf{k}|^2 +  k_2^{4}}\right|\leq \left[ (\mathfrak{C}+2\,\mathfrak{C}^3)+\mathfrak{C}^2\right]\frac{q_2^4}{ q_3^{2} |\mathbf{q}|^2 +  q_2^{4}}.$$

	\item Similarly to the previous one, it follows for $j=2$ and $j=3$ that:
	$$\big|\widehat{M}_2(\mathbf{k})\big| = \left|\frac{ k_1 k_3 |\mathbf{k}|^2 +  k_2^{3}k_3}{ k_3^{2} |\mathbf{k}|^2 +  k_2^{4}}\right|\leq \left[ (\mathfrak{C}^2+2\,\mathfrak{C}^4)+\mathfrak{C}\right] \frac{q_2^4}{ q_3^{2} |\mathbf{q}|^2 +  q_2^{4}}$$
	and
$$\big|\widehat{M}_3(\mathbf{k})\big| = \left|\frac{ k_2^{2}(k_1^{2} + k_2^{2})}{ k_3^{2} |\mathbf{k}|^2 +  k_2^{4}}\right|\leq (1+\mathfrak{C}^2)\frac{q_2^4}{ q_3^{2} |\mathbf{q}|^2 +  q_2^{4}}.$$
\end{itemize}
It follows that $|\widehat{M_j}(\mathbf{k})|\leq \mathfrak{m}_j(\mathfrak{C})$ for $j\in\{1,2,3\}$ and taking $\mathfrak{m}^{\star}:=\max\{ \mathfrak{m}_1,\mathfrak{m}_2, \mathfrak{m}_3\}$ concludes the proof.
\end{proof}

Thanks to (\ref{cone}), it is simple to obtain an upper and lower bound for $\widehat{M_3}(\mathbf{k})$ in $\mathrm{X}_{\mathfrak{C}}$. The lower bound will play a key role in the proof of the local and global existence  result. The next lemma gives us this bound.

\begin{lemma}
Let $\mathfrak{C}>0$. For every smooth periodic function $f:\mathbb{T}^3\rightarrow \R$ with zero vertical mean and frequency support in $\mathrm{X}_{\mathfrak{C}}$, there exists a universal constant $\mathfrak{m}_{\star}=\mathfrak{m}_{\star}(\mathfrak{C})>0$ such that:
$$\mathfrak{m}_{\star}\big|\hat{f}(\mathbf{k})\big| \leq   \big|\widehat{M_3[f]}(\mathbf{k})\big|$$
for all $\mathbf{k}\in\Z^3$. Moreover, the constant $\mathfrak{m}_{\star}$ goes to zero as $\mathfrak{C}$ tends to infinity.
\end{lemma}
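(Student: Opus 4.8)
The plan is to mimic the structure of the proof of Lemma \ref{M_order0}, but now extract a \emph{lower} bound for $|\widehat{M}_3(\mathbf{k})|$ rather than an upper bound for all three symbols. As before, it suffices to treat $\mathbf{k}\in\Z^3_\star$, since on $\{k_3=0\}$ both sides vanish and the inequality is trivial. For $\mathbf{k}\in\mathrm{X}_{\mathfrak{C}}$ we write $\mathbf{k}=k\cdot\mathbf{q}$ with $\mathbf{q}\in\mathrm{K}_{\mathfrak{C}}$ and $k\in\Z\setminus\{0\}$, and use the exact homogeneity of $\widehat{M}_3$ of degree zero under the scaling $\mathbf{k}\mapsto\lambda\mathbf{k}$ to reduce everything to the fixed rational vector $\mathbf{q}$:
\[
\big|\widehat{M}_3(\mathbf{k})\big|=\big|\widehat{M}_3(\mathbf{q})\big|=\frac{q_2^{2}(q_1^{2}+q_2^{2})}{q_3^{2}|\mathbf{q}|^{2}+q_2^{4}}.
\]

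The key step is then to bound the right-hand side from below by a positive constant depending only on $\mathfrak{C}$. For the numerator, the cone condition $|q_1|,|q_3|\leq\mathfrak{C}|q_2|$ forces $q_2\neq 0$, hence $q_2^{2}(q_1^{2}+q_2^{2})\geq q_2^{4}$. For the denominator, I would estimate $|\mathbf{q}|^{2}=q_1^{2}+q_2^{2}+q_3^{2}\leq(1+2\mathfrak{C}^{2})q_2^{2}$ using the cone bounds, and likewise $q_3^{2}\leq\mathfrak{C}^{2}q_2^{2}$, so that $q_3^{2}|\mathbf{q}|^{2}+q_2^{4}\leq\big(\mathfrak{C}^{2}(1+2\mathfrak{C}^{2})+1\big)q_2^{4}$. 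Dividing, the powers of $q_2$ cancel and one obtains
\[
\big|\widehat{M}_3(\mathbf{k})\big|\geq\frac{1}{\mathfrak{C}^{2}(1+2\mathfrak{C}^{2})+1}=:\mathfrak{m}_\star(\mathfrak{C})>0,
\]
which is exactly the claimed estimate with $\mathfrak{m}_\star$ a universal (dimensionless, $\mathbf{k}$-independent) constant. The final sentence, that $\mathfrak{m}_\star(\mathfrak{C})\to 0$ as $\mathfrak{C}\to\infty$, is immediate from this explicit formula since the denominator grows like $\mathfrak{C}^{4}$; alternatively one can exhibit a sequence $\mathbf{q}^{(n)}\in\mathrm{K}_{\mathfrak{C}_n}$ with $|q_3|/|q_2|\to\infty$ along which $\widehat{M}_3(\mathbf{q}^{(n)})\to 0$, confirming sharpness of the dependence on $\mathfrak{C}$.

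There is no real obstacle here; the only point requiring a little care is making sure the homogeneity reduction is legitimate, i.e. that $\widehat{M}_3$ is genuinely a function of the direction $\mathbf{q}$ alone on $\Z^3_\star$ (it is, since numerator and denominator are both homogeneous of degree $4$ in $\mathbf{k}$), and that one never divides by $q_2$ when it could vanish — but membership in $\mathrm{K}_{\mathfrak{C}}$ rules that out. One should also remark that combining this lemma with Lemma \ref{M_order0} gives the two-sided bound $\mathfrak{m}_\star|\hat f(\mathbf{k})|\leq|\widehat{M_3[f]}(\mathbf{k})|\leq\mathfrak{m}^\star|\hat f(\mathbf{k})|$ on $\mathrm{X}_{\mathfrak{C}}$, which is the statement actually used later to compare $\sqrt{M_3}$ with the identity in the energy estimates.
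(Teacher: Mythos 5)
Your proposal is correct and follows essentially the same route as the paper: the paper phrases the estimate as an upper bound on the reciprocal symbol $\widehat{M_3^{-1}}(\mathbf{k})=\bigl(k_3^2|\mathbf{k}|^2+k_2^4\bigr)/\bigl(k_2^2(k_1^2+k_2^2)\bigr)$ over $\mathrm{X}_{\mathfrak{C}}$, which is trivially equivalent to your direct lower bound on $\widehat{M}_3$, and both arguments use the same homogeneity reduction to $\mathbf{q}$ and the same cone estimates, yielding the same constant $\mathfrak{m}_\star(\mathfrak{C})^{-1}=\mathfrak{C}^2+2\mathfrak{C}^4+1$ up to the harmless factor $q_2^2/(q_1^2+q_2^2)\le 1$.
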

\begin{proof}
It is clear that the bound has to be proven only for $\mathbf{k}\in \Z^3_{\star}$, since otherwise we have that $\hat{f}(\mathbf{k})=0$ and the statement holds trivially. As $M_3$ is a Fourier multiplier operator, we have that:
$$|\hat{f}(\mathbf{k})|=|\widehat{M_3^{-1}}(\mathbf{k})\, \widehat{M_3[f]}(\mathbf{k})|\leq ||\widehat{M_3^{-1}}||_{L^{\infty}(\mathrm{X}_{\mathfrak{C}})}\,|\widehat{M_3[f]}(\mathbf{k})|.$$
Morover, for  $\mathbf{k}\in \mathrm{X}_{\mathfrak{C}}$ we have the bound
$$\widehat{M_3^{-1}}(\mathbf{k})=\frac{ k_3^{2} |\mathbf{k}|^2 +  k_2^{4}}{ k_2^{2}(k_1^{2} + k_2^{2})}\leq \left[(\mathfrak{C}^2+2\mathfrak{C}^4)+1 \right]\frac{q_2^4}{ q_2^{2}(q_1^{2} + q_2^{2})} $$
from which it follows that $|\hat{f}(\mathbf{k})|\leq \tfrac{1}{\mathfrak{m}_{\star}} |\widehat{M_3[f]}(\mathbf{k})|$ for a suitable constant $\mathfrak{m}_{\star}$.
\end{proof}

As a consequence of the previous lemmas, under the same hypothesis as before we have that the Fourier operator $M_3$ is equivalent to the identity operator in $L^2(\T^3)$. More specifically, there exists a pair of real numbers $0<\mathfrak{m}_{\star}\leq \mathfrak{m}^{\star}$ such that:
$$\mathfrak{m}_{\star}||f||_{L^2(\T^3)}\leq ||M_3[f]||_{L^2(\T^3)}\leq \mathfrak{m}^{\star}||f||_{L^2(\T^3)}.$$

\begin{cor}\label{M3_order_zero}
Let $\mathfrak{C}>0$. For every smooth periodic function $f:\mathbb{T}^3\rightarrow \R$ with zero vertical mean and frequency support in $\mathrm{X}_{\mathfrak{C}}$, there exists positive constants $\mathfrak{m}_{\star}$ and $\mathfrak{m}^{\star}$ such that:
$$\mathfrak{m}_{\star}\big|\hat{f}(\mathbf{k})\big| \leq  \big|\widehat{M_3[f]}(\mathbf{k})\big|\leq \mathfrak{m}^{\star}\big|\hat{f}(\mathbf{k})\big| \qquad \text{for all} \quad \mathbf{k}\in\Z^3.$$
\end{cor}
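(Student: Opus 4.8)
\textbf{Proof proposal for Corollary \ref{M3_order_zero}.}

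The statement is an immediate consequence of the two preceding lemmas, so the plan is simply to combine them. The first lemma (Lemma \ref{M_order0}) applied to the component $j=3$ yields the upper bound: for every smooth periodic $f$ with zero vertical mean and frequency support in $\mathrm{X}_{\mathfrak{C}}$ there is a constant $\mathfrak{m}^{\star}=\mathfrak{m}^{\star}(\mathfrak{C})>0$ with $|\widehat{M_3[f]}(\mathbf{k})|=|\widehat{M}_3(\mathbf{k})\,\hat{f}(\mathbf{k})|\leq \mathfrak{m}^{\star}|\hat{f}(\mathbf{k})|$ for all $\mathbf{k}\in\Z^3$. The second lemma provides the matching lower bound $\mathfrak{m}_{\star}|\hat{f}(\mathbf{k})|\leq |\widehat{M_3[f]}(\mathbf{k})|$ for all $\mathbf{k}\in\Z^3$, with $\mathfrak{m}_{\star}=\mathfrak{m}_{\star}(\mathfrak{C})>0$. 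Concatenating the two inequalities gives exactly the claimed two-sided estimate, and one notes that by construction $0<\mathfrak{m}_{\star}\leq \mathfrak{m}^{\star}$ since for $\mathbf{k}\in\Z^3_{\star}$ one has $\mathfrak{m}_{\star}|\hat f(\mathbf{k})|\le|\widehat{M_3[f]}(\mathbf{k})|\le\mathfrak{m}^{\star}|\hat f(\mathbf{k})|$, while for $\mathbf{k}$ with $k_3=0$ all three quantities vanish because $\hat f(\mathbf{k})=0$ by the zero vertical mean hypothesis (recall $\mathrm{X}_{\mathfrak{C}}\subset\Z^3_{\star}$).

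There is essentially no obstacle here; the only thing to be careful about is the degenerate set $\{k_3=0\}$, where $\widehat{M_3}$ is not defined. As in the proofs of both lemmas, this case is handled trivially: the frequency-support hypothesis forces $\hat f(\mathbf{k})=0$ there, so every term in the chain of inequalities is zero and the estimate holds vacuously. Hence the corollary follows directly, and as remarked immediately before the statement it translates, via Plancherel, into the norm equivalence $\mathfrak{m}_{\star}\|f\|_{L^2(\T^3)}\leq\|M_3[f]\|_{L^2(\T^3)}\leq\mathfrak{m}^{\star}\|f\|_{L^2(\T^3)}$, i.e. $M_3$ acts as an operator comparable to the identity on the relevant subspace.
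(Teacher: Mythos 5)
Your proposal is correct and matches the paper's (implicit) argument exactly: the corollary is stated as an immediate consequence of combining the upper bound from Lemma \ref{M_order0} with the lower bound from the preceding lemma, with the set $\{k_3=0\}$ handled trivially because $\hat f$ vanishes there by the zero-vertical-mean assumption. Nothing further is needed.
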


The key point of the result of well-posedness is the fact that we only work with frequency localized initial perturbations. As we will see later in the proof, to prove that the perturbation does not leave the region of the frequency space where the operator $\textbf{M}$ behaves like a zero order operator,  the sets closed under addition will play a crucial role.

\subsubsection{Closed sets under addition}
A set $\mathrm{X}$ is closed under addition $+:\mathrm{X}\times \mathrm{X}\rightarrow \mathrm{X}$ if for all $a,b\in \mathrm{X}$ we have that $a+b\in \mathrm{X}$. In other words, performing the binary operation on any two elements of the set always gives you back something that is also in the set.\\

\begin{lemma}\label{Fourier_properties}
Fixed $\mathfrak{C}\in \mathbb{Q}$ and $\mathrm{X}_{\mathfrak{C}}$.
For every pair of smooth periodic function $f,g:\mathbb{T}^3\rightarrow \R$ with frequency support in $\mathrm{X}_{\mathfrak{C}}$ we have that:
\begin{itemize}
	\item $\text{supp}\left(\widehat{f\, g}\right)\subset \mathrm{X}_{\mathfrak{C}}$.
	\item $\text{supp}\left(\widehat{f\pm g}\right)\subset \mathrm{X}_{\mathfrak{C}}$.
	\item $\text{supp}\left(\widehat{M_j[f]}\right)\subset \mathrm{X}_{\mathfrak{C}}$ for all $j\in\{1,2,3\}$.
\end{itemize}
\begin{proof}The proof is an immediate consequence of the properties of the Fourier transform:
\begin{itemize}
	\item Clearly $\text{supp}(\widehat{f\, g})=\text{supp}(\hat{f}\ast \hat{g})\subset \text{supp}(\hat{f})+\text{supp}(\hat{g})\subset \mathrm{X}_{\mathfrak{C}},$ since $\mathrm{X}_{\mathfrak{C}}$ is closed under addition.
	
	\item Note that $\text{supp}(\widehat{f\pm g})=\text{supp}(\hat{f}\pm \hat{g})\subset \text{supp}(\hat{f})\cup \text{supp}(\hat{g})\subset  \mathrm{X}_{\mathfrak{C}}.$
	
	\item As $\textbf{M}$ is a Fourier multiplier, we have: $\text{supp}(\widehat{M_j[f]})=\text{supp}(\widehat{M_j}\hat{f})\subset\text{supp}(\widehat{M_j})\cap \text{supp}(\hat{f})\subset  \mathrm{X}_{\mathfrak{C}}.$
\end{itemize}
\vspace{-0.4 cm}
\end{proof}
\end{lemma}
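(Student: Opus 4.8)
The plan is to reduce all three assertions to one structural fact: the frequency straight line $\mathrm{X}_{\mathfrak{C}}=\mathrm{L}(\mathbf{q})$ is a subgroup of $\Z^3$ — in particular it is closed under addition, closed under negation, and contains the origin. First I would check this directly from the definition of $\mathrm{L}(\mathbf{q})$: if $\mathbf{a},\mathbf{b}\in\mathrm{L}(\mathbf{q})$ then $\mathbf{a}=k_a\mathbf{q}$ and $\mathbf{b}=k_b\mathbf{q}$ for integers $k_a,k_b$ with $k_a\mathbf{q},k_b\mathbf{q}\in\Z^3$; hence $\mathbf{a}\pm\mathbf{b}=(k_a\pm k_b)\mathbf{q}$ is a sum of integer vectors, so it lies in $\Z^3$, and it is again an integer multiple of $\mathbf{q}$, so it lies in $\mathrm{L}(\mathbf{q})$. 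This is the only point that needs a moment of care, since $\mathbf{q}$ is merely rational.

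For the product, I would use that Fourier coefficients on $\T^3$ turn products into convolutions, $\widehat{fg}=\hat f\ast\hat g$, so $\widehat{fg}(\mathbf{k})$ can be nonzero only when $\mathbf{k}=\mathbf{m}+\mathbf{n}$ with $\mathbf{m}\in\text{supp}(\hat f)$ and $\mathbf{n}\in\text{supp}(\hat g)$; that is, $\text{supp}(\widehat{fg})\subset\text{supp}(\hat f)+\text{supp}(\hat g)$. Since both summands lie in $\mathrm{X}_{\mathfrak{C}}$ and $\mathrm{X}_{\mathfrak{C}}$ is closed under addition, the right-hand side is contained in $\mathrm{X}_{\mathfrak{C}}$. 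The sum is even simpler: $\widehat{f\pm g}=\hat f\pm\hat g$ is supported in $\text{supp}(\hat f)\cup\text{supp}(\hat g)\subset\mathrm{X}_{\mathfrak{C}}$. For the third item, $M_j$ is a Fourier multiplier, so $\widehat{M_j[f]}(\mathbf{k})=\widehat{M_j}(\mathbf{k})\,\hat f(\mathbf{k})$, which vanishes wherever $\hat f(\mathbf{k})$ does; therefore $\text{supp}(\widehat{M_j[f]})\subset\text{supp}(\hat f)\subset\mathrm{X}_{\mathfrak{C}}$.

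I do not expect a genuine obstacle: the proof is a routine bookkeeping of supports under convolution, addition, and multiplier operators, resting entirely on the subgroup property of $\mathrm{L}(\mathbf{q})$. One should only note in passing that the modes with $k_3=0$ are harmless — $\widehat{M_j}$ is declared to be zero there, and $\hat f,\hat g$ vanish on $\{k_3=0\}$ by the zero-vertical-mean hypothesis — so none of the three inclusions is affected by that exceptional set.
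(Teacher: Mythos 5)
Your proposal is correct and follows essentially the same route as the paper: convolution for the product, union of supports for the sum, and the multiplier property for $M_j$, all resting on $\mathrm{X}_{\mathfrak{C}}$ being closed under addition. Your explicit verification that $\mathrm{L}(\mathbf{q})$ is a subgroup of $\Z^3$ (and the remark about the $\{k_3=0\}$ modes) only makes explicit what the paper takes for granted.
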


\section{Local existence for frequency-localized initial data}\label{Sec_3}
The main result of this section is:
\begin{thm}\label{local_existence} Fixed $\mathfrak{C}\in \mathbb{Q}$ and $\mathrm{X}_{\mathfrak{C}}$. Assume that $\theta_0\in H^s(\T^3)$  with $s>5/2$ has zero vertical mean and satisfies that $\text{supp}(\widehat{\theta_0})\subset \mathrm{X}_{\mathfrak{C}}$. Then, there exists a time $T>0$ and a unique smooth solution $$\theta\in L^{\infty}(0,T;H^s(\T^3))$$
of the Cauchy problem (\ref{problema_perturbado}) such that $\text{supp}(\widehat{\theta}(t))\subset \mathrm{X}_{\mathfrak{C}}$ for all $t\in[0,T)$.
\end{thm}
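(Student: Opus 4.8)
\smallskip
\noindent\emph{Strategy of the proof.}
Since the steady profile is $\Omega(x_3)=x_3$ we have $\Omega'\equiv 1$, so (\ref{problema_perturbado}) reads $\partial_t\theta+\mathbf{u}\cdot\nabla\theta=-M_3[\theta]$ with $\mathbf{u}=M[\theta]$ and $\theta(\cdot,0)=\theta_0$. The plan is to construct the solution by a Galerkin scheme adapted to the line $\mathrm{X}_{\mathfrak{C}}$. For $N\in\N$ set $S_N:=\mathrm{X}_{\mathfrak{C}}\cap\{\mathbf{k}\in\Z^3_{\star}:|\mathbf{k}|\le N\}$ (a finite set) and let $P_N$ be the orthogonal projection onto $\mathrm{span}\{e^{i\mathbf{k}\cdot\mathbf{x}}:\mathbf{k}\in S_N\}$; one then solves $\partial_t\theta_N+P_N(\mathbf{u}_N\cdot\nabla\theta_N)=-P_N M_3[\theta_N]$ with $\mathbf{u}_N=M[\theta_N]$ and $\theta_N(0)=P_N\theta_0$. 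By Lemma \ref{Fourier_properties} the quadratic nonlinearity maps $\mathrm{span}\,S_N$ into itself, so this is a polynomial ODE on a finite-dimensional space; Picard--Lindel\"of gives a unique local solution, $\widehat{\theta_N}(t)$ stays supported in $S_N\subset\mathrm{X}_{\mathfrak{C}}$ (and zero vertical mean is preserved, since the $\mathbf{k}=0$ mode of $\mathbf{u}_N\cdot\nabla\theta_N$ vanishes by $\nabla\cdot\mathbf{u}_N=0$), and $\mathbf{u}_N$ is divergence-free because $k_j\widehat{M}_j(\mathbf{k})\equiv 0$.

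The heart of the argument is a bound on $\|\theta_N\|_{H^s}$ uniform in $N$. Applying $\Lambda^s$, pairing with $\Lambda^s\theta_N$ in $L^2$, using that $P_N$ is self-adjoint and commutes with $\Lambda^s$, and splitting $\Lambda^s(\mathbf{u}_N\cdot\nabla\theta_N)=\mathbf{u}_N\cdot\nabla\Lambda^s\theta_N+[\Lambda^s,\mathbf{u}_N\cdot\nabla]\theta_N$, the first term integrates to $0$ since $\nabla\cdot\mathbf{u}_N=0$, while the commutator is controlled, by a standard Kato--Ponce estimate (in the spirit of Lemma \ref{commutator_estimate}), by $\|\nabla\mathbf{u}_N\|_{L^\infty}\|\theta_N\|_{H^s}+\|\mathbf{u}_N\|_{H^s}\|\nabla\theta_N\|_{L^\infty}$. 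The decisive input is Lemma \ref{M_order0}: since $\widehat{\theta_N}$ is supported on $\mathrm{X}_{\mathfrak{C}}$, $\textbf{M}$ acts as a bounded multiplier of norm $\mathfrak{m}^{\star}$, hence $\|\Lambda^{\sigma}\mathbf{u}_N\|_{L^2}\le \mathfrak{m}^{\star}\|\Lambda^{\sigma}\theta_N\|_{L^2}$ for every $\sigma\ge 0$; together with $H^{s-1}(\T^3)\hookrightarrow L^\infty(\T^3)$, valid precisely because $s>5/2$, this bounds $\|\nabla\mathbf{u}_N\|_{L^\infty}$, $\|\nabla\theta_N\|_{L^\infty}$ and $\|\mathbf{u}_N\|_{H^s}$ by $C\,\mathfrak{m}^{\star}\|\theta_N\|_{H^s}$. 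The damping term contributes $|\langle\Lambda^s\theta_N,\Lambda^s M_3[\theta_N]\rangle|\le \mathfrak{m}^{\star}\|\theta_N\|_{H^s}^2$. Using $\|\cdot\|_{H^s}\sim\|\Lambda^s\cdot\|_{L^2}$ on zero-vertical-mean functions one arrives at $\tfrac{d}{dt}\|\theta_N\|_{H^s}^2\le C\,\mathfrak{m}^{\star}\big(\|\theta_N\|_{H^s}^2+\|\theta_N\|_{H^s}^3\big)$, a Riccati-type differential inequality whose solution stays bounded on an interval $[0,T]$ with $T=T(\mathfrak{C},\|\theta_0\|_{H^s})>0$ independent of $N$; in particular the ODE does not blow up before time $T$.

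From the uniform $H^s$ bound and the equation, $\partial_t\theta_N$ is bounded in $L^\infty(0,T;H^{s-1})$ (the nonlinearity is a product of an $H^s$ factor and an $H^{s-1}\hookrightarrow L^\infty$ factor, and $M_3$ loses no derivatives on $\mathrm{X}_{\mathfrak{C}}$). The Aubin--Lions--Simon lemma then yields a subsequence converging strongly in $C([0,T];H^{s'}(\T^3))$ for every $s'<s$ and weakly-$*$ in $L^\infty(0,T;H^s)$ to some $\theta$; strong convergence suffices to pass to the limit in $\mathbf{u}_N\cdot\nabla\theta_N$ and $M_3[\theta_N]$, so $\theta$ solves (\ref{problema_perturbado}), and since every $\widehat{\theta_N}$ is supported in $\mathrm{X}_{\mathfrak{C}}$ so is $\widehat{\theta}(t)$. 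For uniqueness, if $\theta^1,\theta^2$ are two such solutions then $w:=\theta^1-\theta^2$ is again supported in $\mathrm{X}_{\mathfrak{C}}$ and satisfies $\partial_t w+\mathbf{u}^1\cdot\nabla w=-(\mathbf{u}^1-\mathbf{u}^2)\cdot\nabla\theta^2-M_3[w]$; an $L^2$ estimate using $\nabla\cdot\mathbf{u}^1=0$, Lemma \ref{M_order0} (so $\|\mathbf{u}^1-\mathbf{u}^2\|_{L^2}=\|M[w]\|_{L^2}\le\mathfrak{m}^{\star}\|w\|_{L^2}$) and $\|\nabla\theta^2\|_{L^\infty}\lesssim\|\theta^2\|_{H^s}$ gives $\tfrac{d}{dt}\|w\|_{L^2}^2\lesssim\|w\|_{L^2}^2$, hence $w\equiv 0$ by Gr\"onwall. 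Finally, $s>5/2$ gives $\theta(t)\in C^1(\T^3)$ (a classical solution), running the same energy estimate at level $\sigma>s$ propagates any extra Sobolev regularity of $\theta_0$ (the sense in which the solution is ``smooth''), and $\theta\in C([0,T];H^s)$ follows by a Bona--Smith argument.

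The genuine obstacle here is conceptual rather than computational: for generic, non-localized data the energy method collapses because $\textbf{M}$ is even and of order one, so the term $T_{bad}$ from the introduction cannot be absorbed; everything above works only because Lemma \ref{M_order0} replaces that unbounded operator by a bounded one, which is legitimate exactly because Lemma \ref{Fourier_properties} keeps the frequency support inside $\mathrm{X}_{\mathfrak{C}}$ for all time. Hence the one delicate design choice is to set up an approximation scheme that simultaneously preserves the zero-vertical-mean condition, the divergence-free structure, and the frequency localization --- the Galerkin truncation on $S_N$ does precisely this --- while the threshold $s>5/2$ is forced by the requirement $\nabla\theta,\nabla\mathbf{u}\in L^\infty$ in the commutator estimate, i.e.\ by $H^{s-1}\hookrightarrow L^\infty(\T^3)$.
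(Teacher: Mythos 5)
Your argument is sound and reaches the correct conclusion, but the construction is genuinely different from the paper's. The paper builds the solution by an outer Picard iteration in which the drift velocity is frozen at the previous iterate, $\mathbf{u}_{n-1}=M[\theta_{n-1}]$; each resulting linear transport problem is solved by a separate theorem (Theorem \ref{local_existence_hyperdisipation}), itself proved by adding a hyperdissipative term $-\epsilon\Delta$, running a Duhamel iteration whose semigroup $e^{-(\epsilon(-\Delta)+M_3)t}$ visibly preserves the frequency support, and then letting $\epsilon\to0$; convergence of the outer iteration is obtained by showing it contracts in $L^{\infty}(0,T;H^{s-1})$. Your Galerkin truncation onto $S_N=\mathrm{X}_{\mathfrak{C}}\cap\{|\mathbf{k}|\le N\}$ collapses these two layers into one: frequency localization, the divergence-free structure and the zero-vertical-mean condition are preserved automatically by the projection (you correctly note that the zero mode of the nonlinearity vanishes since $\mathbf{u}_N\cdot\nabla\theta_N=\nabla\cdot(\mathbf{u}_N\theta_N)$), the cancellation of the transported term survives $P_N$ because $\theta_N$ lies in the range of $P_N$, and Aubin--Lions compactness replaces the contraction argument. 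Both proofs rest on exactly the same analytic inputs --- Lemma \ref{M_order0}, Lemma \ref{Fourier_properties}, the commutator estimate of Lemma \ref{commutator_estimate} and $H^{s-1}(\T^3)\hookrightarrow L^{\infty}$ for $s>5/2$ --- and the uniqueness arguments are essentially identical ($L^2$ estimate on the difference, legitimate because both solutions are frequency-localized so that $\mathbf{M}$ is bounded on their difference). What the paper's route buys is an auxiliary linear well-posedness result of independent use and strong convergence of the full approximating sequence without recourse to compactness; what yours buys is brevity and an approximation scheme that is exactly solvable (a finite-dimensional quadratic ODE). Two cosmetic points: the quadratic nonlinearity maps $\mathrm{span}\,S_N$ into $\mathrm{span}\left(\mathrm{X}_{\mathfrak{C}}\cap\{|\mathbf{k}|\le 2N\}\right)$ rather than into $\mathrm{span}\,S_N$, and it is the composition with $P_N$ that closes the ODE --- your scheme does include this projection, so only the phrasing is off; and the damping term has a favorable sign, $-\|\sqrt{M_3}\,[\Lambda^s\theta_N]\|_{L^2}^2\le0$, so it may simply be discarded instead of being bounded by $\mathfrak{m}^{\star}\|\theta_N\|_{H^s}^2$.
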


Before that, the goal is to prove the existence of smooth solutions to the scalar \textit{linear} equation:
\begin{equation}\label{linear_hyperdisipation}
\left\{
\begin{array}{rl}
\partial_t\theta(\mathbf{x},t)+\mathbf{v}(\mathbf{x},t)\cdot\nabla\theta(\mathbf{x},t) &= -M_3[\theta](\mathbf{x},t)  \\
\theta(\mathbf{x},0)&=\phantom{-}\theta_0(\mathbf{x})
\end{array}
\right.
\end{equation}
where the initial datum $\theta_0$ and the given divergence-free drift velocity field $\mathbf{v}$ satisfies that:
\begin{itemize}
	\item $\text{supp}(\widehat{\theta_0})\subset \mathrm{X}_{\mathfrak{C}}$.
	\item $\text{supp}(\widehat{\mathbf{v}}(t))\subset \mathrm{X}_{\mathfrak{C}}$ for all $t\in[0,T)$ for a positive time $T$.
\end{itemize}
The main result is:
\begin{thm}\label{local_existence_hyperdisipation}
Let $s>3/2$. Given $\theta_0\in H^s(\T^3)$ and a divergence-free vector field $\mathbf{v}\in L^{\infty}(0,T;H^s(\T^3))$ satisfying the above conditions. Then, there exists a unique smooth solution of (\ref{linear_hyperdisipation}) such that:
\begin{equation}\label{smoothness_class}
\theta\in L^{\infty}(0,T;H^s(\T^3)).
\end{equation}
Moreover, we have that $\text{supp}(\hat{\theta}(t))\subset  \mathrm{X}_{\mathfrak{C}}$ for all $t\in[0,T).$
\end{thm}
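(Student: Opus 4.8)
The plan is to treat (\ref{linear_hyperdisipation}) as a \emph{linear} transport equation with a prescribed, frequency-localized drift $\mathbf{v}$, in which $-M_3[\theta]$ plays the role of a bounded — and, as we shall see, sign-favorable — lower-order perturbation. The point is that once one knows a priori that the solution keeps its Fourier spectrum inside $\mathrm{X}_{\mathfrak{C}}$, Corollary~\ref{M3_order_zero} says $M_3$ acts there as a zeroth-order operator, so no parabolic regularization is needed; the whole argument then reduces to a standard transport-type construction: a frequency-truncated approximation, uniform energy bounds, a compactness passage to the limit, and an $L^2$ uniqueness estimate.

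First I would set $\mathbf{v}_N=P_N\mathbf{v}$ and $\theta_{0,N}=P_N\theta_0$, where $P_N$ is the Fourier projection onto $\{|\mathbf{k}|\le N\}$, and solve the finite-dimensional linear system $\partial_t\theta_N+P_N(\mathbf{v}_N\cdot\nabla\theta_N)=-M_3[\theta_N]$, $\theta_N(0)=\theta_{0,N}$, on the space of trigonometric polynomials with frequencies in $\mathrm{X}_{\mathfrak{C}}\cap\{|\mathbf{k}|\le N\}$; since $\mathbf{v}_N\in L^\infty(0,T;H^s)$ this admits a global solution on $[0,T]$. Because $\mathrm{L}(\mathbf{q})$ is a subgroup of $\Z^3$, hence closed under addition, the argument of Lemma~\ref{Fourier_properties} shows $\mathbf{v}_N\cdot\nabla\theta_N$ again has spectrum in $\mathrm{X}_{\mathfrak{C}}$; combined with the fact that $M_3$ is a Fourier multiplier, the modes $\widehat{\theta_N}(\mathbf{k},\cdot)$ with $\mathbf{k}\notin\mathrm{X}_{\mathfrak{C}}$ satisfy a homogeneous linear ODE with zero initial data and so vanish identically. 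In particular $\theta_N(t)$ retains zero vertical mean, so that $M_3[\theta_N]$ is unambiguously defined throughout.

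Testing the approximate equation with $\Lambda^{2s}\theta_N$ gives
$$\frac{1}{2}\frac{d}{dt}\|\theta_N\|_{H^s}^2=-\int_{\T^3}\Lambda^s\theta_N\,\Lambda^s\big(\mathbf{v}_N\cdot\nabla\theta_N\big)-\sum_{\mathbf{k}\in\Z^3}|\mathbf{k}|^{2s}\,\widehat{M_3}(\mathbf{k})\,|\widehat{\theta_N}(\mathbf{k})|^2,$$
where the last sum is $\ge 0$ since $\widehat{M_3}\ge 0$, and is therefore discarded; in the first term the symmetric part $\int\Lambda^s\theta_N\,\mathbf{v}_N\cdot\nabla\Lambda^s\theta_N$ vanishes after integration by parts (as $\nabla\cdot\mathbf{v}_N=0$), leaving only the commutator, bounded via Lemma~\ref{commutator_estimate} by $C\big(\|\nabla\mathbf{v}_N\|_{L^\infty}+\|\Lambda^s\mathbf{v}_N\|_{L^2}\big)\|\theta_N\|_{H^s}^2$. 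The one genuinely delicate point — what I expect to be the heart of the matter — is that on $\T^3$ the bound $\|\nabla\mathbf{v}\|_{L^\infty}\lesssim\|\mathbf{v}\|_{H^s}$ would ordinarily demand $s>5/2$, whereas here it holds for \emph{every} $s>3/2$: since $\widehat{\mathbf{v}}$ is supported on the single line $\{k\mathbf{q}:k\in\Z\}$, a Cauchy--Schwarz/Bernstein estimate along that line reduces the claim to the one-dimensional embedding $H^s(\T)\hookrightarrow W^{1,\infty}(\T)$, at the cost of a constant depending on $\mathbf{q}$ — hence on $\mathfrak{C}$, which is fixed. Grönwall then yields $\|\theta_N(t)\|_{H^s}\le\|\theta_0\|_{H^s}\exp\!\big(C\int_0^t\|\mathbf{v}(\tau)\|_{H^s}d\tau\big)$ uniformly in $N$, and, writing $\mathbf{v}_N\cdot\nabla\theta_N=\nabla\cdot(\mathbf{v}_N\theta_N)$ and using the algebra property (Lemma~\ref{product_estimate} with $s>3/2$) together with the boundedness of $M_3$, one gets $\partial_t\theta_N$ bounded in $L^\infty(0,T;H^{s-1})$.

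A standard Aubin--Lions compactness argument now produces a subsequence with $\theta_N\to\theta$ in $C([0,T];H^{s-\delta})$ and $\theta_N\rightharpoonup\theta$ weak-$*$ in $L^\infty(0,T;H^s)$; since $\mathbf{v}_N\to\mathbf{v}$ in $L^\infty(0,T;H^s)$, every term of the equation passes to the limit (the transport term as a product of a strongly and a weakly convergent factor), so $\theta$ solves (\ref{linear_hyperdisipation}) in $L^\infty(0,T;H^s)$, and the spectral constraint $\mathrm{supp}\,\widehat{\theta}(t)\subset\mathrm{X}_{\mathfrak{C}}$ survives the limit. Weak time-continuity together with continuity of $t\mapsto\|\theta(t)\|_{H^s}$ (read off the energy identity) upgrades this to $\theta\in C([0,T];H^s)$, so the datum is attained and the solution lies in the stated regularity class. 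Finally, for uniqueness the difference $w$ of two solutions satisfies the same equation with $w(0)=0$ and spectrum in $\mathrm{X}_{\mathfrak{C}}$, and the $L^2$ identity $\tfrac12\tfrac{d}{dt}\|w\|_{L^2}^2=\tfrac12\int(\nabla\cdot\mathbf{v})|w|^2-\sum_{\mathbf{k}}\widehat{M_3}(\mathbf{k})|\widehat{w}(\mathbf{k})|^2\le 0$ forces $w\equiv 0$.
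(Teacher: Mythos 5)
Your argument is correct, but it follows a genuinely different route from the paper's. The paper regularizes \eqref{linear_hyperdisipation} by adding $-\epsilon\Delta$, runs an iteration in which the transport term is lagged and treated as a source, closes the bounds using only the algebra property of $H^s$ ($s>3/2$) together with Young's inequality to absorb $\tfrac{1}{2\epsilon}\|\Lambda^s(\mathbf{v}\,\theta_n^\epsilon)\|_{L^2}^2$ into the $\epsilon\|\Lambda^{s+1}\theta_{n+1}^\epsilon\|_{L^2}^2$ term, and only then sends $\epsilon\to 0$ via Aubin--Lions; it thus never needs $\nabla\mathbf{v}\in L^\infty$, at the price of a two-level limit ($n\to\infty$, then $\epsilon\to 0$) and an $\epsilon$-dependent time step that must be iterated to reach $T$. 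You instead work directly with a Fourier--Galerkin truncation and close the commutator energy estimate at the level $s>3/2$ by exploiting that any function with spectrum on the line $\mathrm{L}(\mathbf{q})$ is a function of the single variable $\mathbf{q}\cdot\mathbf{x}$, so the one-dimensional embedding $H^{s}(\T)\hookrightarrow W^{1,\infty}(\T)$ applies; this is a genuine structural observation that the paper does not use in this proof, and it buys you a single compactness passage with no artificial viscosity. Two small points to tidy up: (i) Lemma~\ref{commutator_estimate} as stated excludes $p'=\infty$, so either invoke the standard Kato--Ponce endpoint form $\|\nabla f\|_{L^\infty}\|\Lambda^{s-1}g\|_{L^2}+\|\Lambda^s f\|_{L^2}\|g\|_{L^\infty}$ (which the paper itself uses later) or take $p'$ large and finite and interpolate; (ii) the resulting bound involves $\|\nabla\theta_N\|_{L^\infty}$ as well as $\|\nabla\mathbf{v}_N\|_{L^\infty}$, so the one-dimensional reduction must be applied to $\theta_N$ too --- which is legitimate, since you have already shown its spectrum stays on $\mathrm{L}(\mathbf{q})$, but it should be said explicitly. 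With these repairs both proofs are complete and yield the same conclusion.
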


\begin{proof}[Proof of Theorem \ref{local_existence_hyperdisipation}]
Following the arguments of \cite{Friedlander-Rusin-Vicol_fractional}, we regularize (\ref{linear_hyperdisipation}) with hyper-dissipation as:
\begin{equation}\label{linear_hyperdisipation_epsilon}
\left\{
\begin{array}{rl}
\partial_t\theta^{\epsilon}(\mathbf{x},t)+\mathbf{v}(\mathbf{x},t)\cdot\nabla\theta^{\epsilon}(\mathbf{x},t)-\epsilon\,\Delta\theta^{\epsilon}(\mathbf{x},t) &= -M_3[\theta^{\epsilon}](\mathbf{x},t)  \\
\theta^{\epsilon}(\mathbf{x},0)&=\phantom{-}\theta_0(\mathbf{x})
\end{array}
\right.
\end{equation}
for $\epsilon\in(0,1]$ and finally we pass to the limit $\epsilon\to 0$ in order to obtain a solution of the original system. 

On one hand, since $\mathbf{v}$ is smooth and divergence-free, it follows from the De Giorgi techniques (see  \cite{Friedlander-Rusin-Vicol_fractional} or \cite{Seregin-Silvestre-Sverak-Zlatos}) that there exists a \textit{unique} global \textit{smooth} solution $\theta^{\epsilon}$ of (\ref{linear_hyperdisipation_epsilon}) with
$$\theta^{\epsilon}\in L^{\infty}(0,T;H^s(\T^3))\cap \epsilon L^2(0,T;H^{s+1}(\T^3)).$$
%and the solution is bounded in the above spaces independently of $\epsilon\in(0,1].$

On the other hand, we proceed to construct a solution $\theta^{\epsilon}$ of (\ref{linear_hyperdisipation_epsilon}) which has the desired frequency support property and belongs to the smooth category. Then, by the uniqueness of strong solutions, we pass to the limit $\epsilon\rightarrow 0$ and obtain a solution with desired properties. We consider the following iterative scheme:
\begin{equation}\label{iteracion_1_epsilon}
\left\{
\begin{array}{rl}
\partial_t\theta_1^{\epsilon}(\mathbf{x},t)-\epsilon\,\Delta\theta_{1}^{\epsilon}(\mathbf{x},t) &= -M_3[\theta_1^{\epsilon}](\mathbf{x},t)  \\
\theta_1^{\epsilon}(\mathbf{x},0)&=\phantom{-}\theta_0(\mathbf{x})
\end{array}
\right.
\end{equation}
and
\begin{equation}\label{iteracion_n_epsilon}
\left\{
\begin{array}{rl}
\partial_t\theta_{n+1}^{\epsilon}(\mathbf{x},t) +\mathbf{v}(\mathbf{x},t)\cdot\nabla\theta_{n}^{\epsilon}(\mathbf{x},t) -\epsilon\,\Delta\theta_{n+1}^{\epsilon}(\mathbf{x},t)&= -M_3[\theta_{n+1}^{\epsilon}](\mathbf{x},t)  \\
\theta_{n+1}^{\epsilon}(\mathbf{x},0)&=\phantom{-}\theta_0(\mathbf{x})
\end{array}
\right.
\end{equation}
for all $n\geq 1$. We note that the solutions of (\ref{iteracion_1_epsilon}) and (\ref{iteracion_n_epsilon}) respectively, may be written explicitly using the Duhamel's formula:
\begin{align*}
\theta_1^{\epsilon}(\mathbf{x},t)&=e^{-(\epsilon(-\Delta)+M_3)\,t}\,\theta_0(\mathbf{x}),\\
\theta_{n+1}^{\epsilon}(\mathbf{x},t)&=e^{-(\epsilon(-\Delta)+M_3)\,t}\,\theta_0(\mathbf{x})+\int_{0}^{t}e^{-(\epsilon(-\Delta)+M_3)\,(t-\tau)}\, (\mathbf{v}(\mathbf{x},\tau)\cdot\nabla\theta_{n}^{\epsilon}(\mathbf{x},\tau))\,d\tau.
\end{align*}
Since $e^{-(\epsilon(-\Delta)+M_3)\,t}$  is given explicitly by the Fourier multiplier with non-zero symbol $e^{-(\epsilon|\mathbf{k}|+\widehat{M_3}(\mathbf{k}))\,t}$, this operator does not alter the frequency support of the function on which it acts. Therefore, it follows directly from our assumption on the frequency support of $\theta_0$ that $\text{supp}(\widehat{\theta_1^{\epsilon}}(t))\subset \mathrm{X}_{\mathfrak{C}}$ for all $t\in[0,T).$

Now, we proceed inductively and note that if $\text{supp}(\widehat{\theta_n^{\epsilon}}(t))\subset \mathrm{X}_{\mathfrak{C}}$ for all $t\in[0,T)$. Then, by our assumption on the frequency support of $\mathbf{v}$ and Lemma \ref{Fourier_properties} we also have $\text{supp}(\widehat{\mathbf{v}\cdot\nabla\theta_n^{\epsilon}}(t))\subset \mathrm{X}_{\mathfrak{C}}$ for all $t\in[0,T)$. Hence, we obtain that  $\text{supp}(\widehat{\theta_{n+1}^{\epsilon}}(t))\subset \mathrm{X}_{\mathfrak{C}}$ for all $t\in[0,T)$ concluding the proof of the induction step. This proves that the frequency support of all the iterates $\theta_n^{\epsilon}(t)$ lies on $\mathrm{X}_{\mathfrak{C}}$ for all $t\in[0,T)$.

Thus, it is left to prove that the sequence $\{\theta_n^{\epsilon}\}_{n\geq 1}$ converges to a function $\theta^{\epsilon}$ which lies in the smoothness class (\ref{smoothness_class}). Note that there is no cancellation of the highest order term in the nonlinearity. However, since (at least for now) $\epsilon\in(0,1]$ is fixed, we may use the full smoothing power of the Laplacian.\vspace{0.2 cm}

To prove it, for all $n\geq 1$ we define:
$$\mathcal{R}_n(t):=\sup_{\tau\in[0,t]}||\Lambda^s\theta_n^{\epsilon}||_{L^2}^2(\tau)+\int_{0}^{t} ||\sqrt{M_3}\,[\Lambda^{s}\theta_n^{\epsilon}]||_{L^2}^2(\tau)\,d\tau+\epsilon \int_{0}^{t} ||\Lambda^{s+1}\theta_n^{\epsilon}||_{L^2}^2(\tau)\,d\tau.$$
Moreover, as the frequency support of all the iterates $\theta_n^{\epsilon}$ lies on $\mathrm{X}_{\mathfrak{C}}\subset \mathrm{K}_{\mathfrak{C}}$, using Corollary \ref{M3_order_zero} we have:
$$||\sqrt{M_3}\,[\Lambda^{s}\theta_n^{\epsilon}]||_{L^2}^2(\tau)\approx ||\Lambda^{s}\theta_n^{\epsilon}||_{L^2}^2(\tau).$$

In the first step, note that from (\ref{iteracion_1_epsilon}) it follows that for any $t\in(0,T]$ we obtain that $\mathcal{R}_1(t)\leq 2\, ||\Lambda^s\theta_0||_{L^2}^2$. We proceed inductively and assume that there exists a time $T^{\star}\in(0,T]$ such that $\mathcal{R}_{n}(T^{\star})\leq  2\, ||\Lambda^s\theta_0||_{L^2}^2$. Here, we show that if $T^{\star}$
 is chosen appropriately, in terms of $\theta_0, \mathbf{v}$ and $\epsilon$, we have $\mathcal{R}_{n+1}(T^{\star})\leq  2\, ||\Lambda^s\theta_0||_{L^2}^2$ \nolinebreak too.

From (\ref{iteracion_n_epsilon}), the divergence-free velocity field $\nabla\cdot\mathbf{v}=0$, integration by parts and the fact that $s>3/2$ which makes $H^s$ an algebra, we obtain:
$$\tfrac{1}{2}\partial_t ||\Lambda^s\theta_{n+1}^{\epsilon}||_{L^2}^2(t)+\,||\sqrt{M_3}\,[\Lambda^s\theta_{n+1}^{\epsilon}]||_{L^2}^2(t)+\epsilon\, ||\Lambda^{s+1}\theta_{n+1}^{\epsilon}||_{L^2}^2(t)\leq \frac{||\Lambda^s(\mathbf{v}\,\theta_n^{\epsilon})||_{L^2}^2(t)}{2\,\epsilon} +\frac{\epsilon\,||\Lambda^{s+1}\theta_n^{\epsilon}||_{L^2}^2(t)}{2}$$
and in consequence, as $\mathcal{R}_{n}(T^{\star})\leq  2\, ||\Lambda^s\theta_0||_{L^2}^2$ we have proved that:
\begin{align*}
\mathcal{R}_{n+1}(T^{\star})&\leq ||\Lambda^s\theta_0||_{L^2}^2+\frac{C_s}{\epsilon}\int_{0}^{T^{\star}}||\Lambda^s\mathbf{v}||_{L^2}^2(\tau)\,||\Lambda^{s}\theta_n^{\epsilon}||_{L^2}^2(\tau)\,d\tau\\
&\leq ||\Lambda^s\theta_0||_{L^2}^2+\frac{2 C_s T^{\star}}{\epsilon}||\mathbf{v}||_{L^{\infty}(0,T;H^s)}^2\,||\Lambda^{s}\theta_0||_{L^2}^2.
\end{align*} 
Hence, if we let 
\begin{equation}\label{valor_T1}
T^{\star}\leq \frac{\epsilon}{2 C_s ||\mathbf{v}||_{L^{\infty}(0,T;H^s)}^2}
\end{equation}
we have that $\mathcal{R}_{n+1}(T^{\star})\leq 2||\Lambda^{s}\theta_0||_{L^2}^2$. Since $T^{\star}$ is independent of $n$, it is clear that the inductive argument may be carried through, and hence $\mathcal{R}_n(T^{\star})\leq 2 ||\Lambda^{s}\theta_0||_{L^2}^2$ for all $n\geq 1$.\vspace{0.2 cm}

The second step is the passage to the limit in $n$. Taking the difference of two iterates:
\begin{equation}\label{difference_two}
\left\{
\begin{array}{rl}
\partial_t(\theta_{n+1}^{\epsilon}-\theta_{n}^{\epsilon})(\mathbf{x},t) +\mathbf{v}(\mathbf{x},t)\cdot\nabla(\theta_{n}^{\epsilon}-\theta_{n-1}^{\epsilon})(\mathbf{x},t) -\epsilon\,\Delta(\theta_{n+1}^{\epsilon}-\theta_{n}^{\epsilon})(\mathbf{x},t)&= -M_3[\theta_{n+1}^{\epsilon}-\theta_{n}^{\epsilon}](\mathbf{x},t)  \\
(\theta_{n+1}^{\epsilon}-\theta_{n}^{\epsilon})(\mathbf{x},0)&=\phantom{-}0
\end{array}
\right.
\end{equation}
for all $n\geq 2$. Similarly to the above, it follows from (\ref{difference_two}) that
\begin{align*}
\hspace*{-0.15 cm}\widetilde{\mathcal{R}}_n(t)&:=\sup_{\tau\in[0,t]}||\Lambda^s(\theta_{n+1}^{\epsilon}-\theta_{n}^{\epsilon})||_{L^2}^2(\tau)+\int_{0}^{t} ||\sqrt{M_3}\,[\Lambda^{s}(\theta_{n+1}^{\epsilon}-\theta_{n}^{\epsilon})]||_{L^2}^2(\tau)\,d\tau+\epsilon \int_{0}^{t} ||\Lambda^{s+1}(\theta_{n+1}^{\epsilon}-\theta_{n}^{\epsilon})||_{L^2}^2(\tau)\,d\tau\\
&\leq \frac{C_s}{\epsilon}\int_{0}^{t} ||\Lambda^s\mathbf{v}||_{L^2}^2(\tau) ||\Lambda^s(\theta_{n}^{\epsilon}-\theta_{n-1}^{\epsilon})||_{L^2}^2(\tau)\,d\tau\leq \frac{t\, C_s}{\epsilon} ||\mathbf{v}||_{L^{\infty}(0,T;H^s)}^2 \widetilde{\mathcal{R}}_{n-1}(t) \hspace{1.8 cm} \text{for all} \quad n\geq 2.
\end{align*}
In particular, due to our choice of $T^{\star}\in(0,T]$ on (\ref{valor_T1}) we have that $\widetilde{\mathcal{R}}_n(T^{\star})\leq\frac{1}{2}\widetilde{\mathcal{R}}_{n-1}(T^{\star})$, which implies that the sequence $\{\theta_n^{\epsilon}\}_{n\geq 1}$ is not only bounded,  we actually have a contraction in
\begin{equation}\label{smooth_class_T1}
L^{\infty}(0,T^{\star};H^s(\T^3))\cap \epsilon L^2(0,T^{\star};H^{s+1}(\T^3)).
\end{equation}
Hence there exists a limiting function $\theta_n^{\epsilon}\rightarrow \theta^{\epsilon}$ in the category (\ref{smooth_class_T1}). In addition, since for every $n\geq 1$ we have $\text{supp}(\widehat{\theta_n^{\epsilon}}(t))\subset \mathrm{X}_{\mathfrak{C}}$ and the set $\mathrm{X}_{\mathfrak{C}}$ is closed, we automatically obtain that $\text{supp}(\widehat{\theta^{\epsilon}}(t))\subset \mathrm{X}_{\mathfrak{C}}$.

To show that $\theta^{\epsilon}$ may be continued in (\ref{smooth_class_T1}) up to time $T$, we note that $||\Lambda^s \theta^{\epsilon}||_{L^2}^2(T^{\star})\leq 2 ||\Lambda^s \theta_0||_{L^2}^2$ thanks to the fact that $\mathcal{R}_n(T^{\star})\leq 2 ||\Lambda^s \theta_0||_{L^2}^2$ for all $n\geq 1$. Hence, repeating the above argument with initial condition $\theta^{\epsilon}(T^{\star})$, we obtain a solution $\theta^{\epsilon}\in L^{\infty}(0,2\,T^{\star};H^s(\T^3))\cap \epsilon L^2(0,2\,T^{\star};H^{s+1}(\T^3))$
whit the bound $||\Lambda^s\theta^{\epsilon}||_{L^2}^2(2\,T^{\star})\leq 2||\Lambda^s\theta^{\epsilon}||_{L^2}^2(T^{\star})\leq 4 ||\Lambda^s \theta_0||_{L^2}^2$.

The above argument may be extended iteratively, thereby concluding the construction of the solution $\theta^{\epsilon}$ in the category (\ref{smoothness_class}).\vspace{0.2 cm}

In order to close the proof we need to pass to the limit as $\epsilon\rightarrow 0$. By construction we have that $\theta^{\epsilon}$ is uniformly bounded, with respect to $\epsilon$ in $L^{\infty}(0,T;H^s(\T^3))$, and from (\ref{linear_hyperdisipation_epsilon}) we obtain that $\partial_t\theta^{\epsilon}$ is uniformly bounded, with respect to $\epsilon$ in $L^{\infty}(0,T;H^{s-2}(\T^3)) \cap L^2(0,T;H^{s-2}(\T^3))$. In particular, from the uniform bounds (with respect to $\epsilon$) of $\theta^{\epsilon}$ and $\partial_t\theta^{\epsilon}$ in the corresponding norms, one can use the Banach-Alaoglu theorem and the Aubin-Lions's compactness lemma (see, e.g. \cite{Lions} or \cite{Temam}) to justify that one can extract a subsequence of $\theta^{\epsilon}$ and $\partial_t\theta^{\epsilon}$ (using the same index for simplicity) as $\epsilon\to 0$ and elements $\theta$ and $\partial_t\theta$, such that:
\begin{itemize}
	\item \quad $\theta^{\epsilon}\rightarrow \theta$  strongly in $C(0,T;H^{s-1}(\T^3))$.
	\item \quad $\partial_t\theta^{\epsilon}\rightharpoonup \partial_t\theta$ weakly in $L^2(0,T;H^{s-2}(\T^3))$.
	\item \quad $\partial_t\theta^{\epsilon}\overset{\ast}{\rightharpoonup} \partial_t\theta$ weakly-$\ast$ in $L^{\infty}(0,T;H^{s-2}(\T^3))$.
\end{itemize}
Now, from (\ref{linear_hyperdisipation_epsilon}) we have that $\partial_t\theta^{\epsilon} \rightarrow -M_3[\theta]-\mathbf{v}\cdot\nabla\theta$ in $C(0,T;H^{s-2}(\T^3))$. Moreover, as $\theta^{\epsilon}\rightarrow \theta$ in $C(0,T;H^{s-1}(\T^3))$, the distribution limit of $\partial_t\theta^{\epsilon}$ must be $\partial_t\theta$ for the closed graph theorem \cite{Brezis}. In consequence, since the evolution is linear and $s$ is large enough, it follows that this limiting function is the unique smooth solution of (\ref{linear_hyperdisipation}) which lies in $L^{\infty}(0,T;H^s(\T^3))$. Lastly, since for every $\epsilon\in(0,1]$ we have $\text{supp}(\widehat{\theta^{\epsilon}})\subset \mathrm{X}_{\mathfrak{C}}$, and since $\mathrm{X}_{\mathfrak{C}}$ is closed, we obtain that the limiting function also has the desired support property,  i.e. $\text{supp}(\widehat{\theta})\subset\mathrm{X}_{\mathfrak{C}}$, which concludes the proof of the theorem.
\end{proof}

The main difficulty in the proof of the previous theorem is the construction of an iteration scheme which is both  suitble for energy estimates and preserves the feature that in each iteration step the frequency support of the approximation lies on $\mathrm{X}_{\mathfrak{C}}$. Now, we are ready to prove the main result of this section.

\begin{proof}[Proof of Theorem \ref{local_existence}]
In order to construct the local in time solution $\theta$ with frequency support in $\mathrm{X}_{\mathfrak{C}}$, we consider the sequence of approximations $\{\theta_n\}_{n\geq 1}$ given by the solutions of
\begin{equation}\label{iteracion_1}
\left\{
\begin{array}{rl}
\partial_t\theta_1(\mathbf{x},t) &= -M_3[\theta_1](\mathbf{x},t)  \\
\theta_1(\mathbf{x},0)&=\phantom{-}\theta_0(\mathbf{x})
\end{array}
\right.
\end{equation}
and
\begin{equation}\label{iteracion_n}
\left\{
\begin{array}{rl}
\partial_t\theta_n(\mathbf{x},t) +\mathbf{u}_{n-1}(\mathbf{x},t)\cdot\nabla\theta_{n}(\mathbf{x},t) &= -M_3[\theta_n](\mathbf{x},t)  \\
\mathbf{u}_{n-1}(\mathbf{x},t)&=\phantom{-}M[\theta_{n-1}](\mathbf{x},t)\\
\theta_n(\mathbf{x},0)&=\phantom{-}\theta_0(\mathbf{x})
\end{array}
\right.
\end{equation}
for all $n\geq 2$. One may solve (\ref{iteracion_1}) explicitly in the frequency space as
$\widehat{\theta_1}(\mathbf{k},t)=e^{-\widehat{M_3}(\mathbf{k})\,t}\,\widehat{\theta_0}(\mathbf{k})$ for $\mathbf{k}\in\Z^3$. Hence, it is clear that $\text{supp}(\widehat{\theta_1}(t))\subset \text{supp}(\widehat{\theta_0})\subset \mathrm{X}_{\mathfrak{C}}$ for all $t\geq 0$.  Moreover, we have that:
$$ ||\Lambda^s\theta_1||_{L^2}^2(t)+2\int_{0}^{t}||\sqrt{M_3}\,[\Lambda^s\theta_1]||_{L^2}^2(\tau)\,d\tau=||\Lambda^s\theta_0||_{L^2}^2 \qquad \text{for all} \quad t\geq 0.$$
In particular, fixed $T>0$, we obtain the bound:
$$ ||\Lambda^s\theta_1||_{L^{\infty}(0,T;L^2)}^2+||\sqrt{M_3}\,[\Lambda^s\theta_1]||_{L^{2}(0,T;L^2)}^2\leq  2\, ||\Lambda^s\theta_0||_{L^2}^2.$$
In order to solve (\ref{iteracion_n}) we appeal to Theorem \ref{local_existence_hyperdisipation}. Indeed, by the inductive assumption we have that $\theta_{n-1} \in L^\infty(0,T;H^s)$  and also that $\text{supp}(\widehat{\theta_{n-1}}(t))\subset \mathrm{X}_{\mathfrak{C}}$ for all $t\in[0,T)$. Hence, as $\mathbf{u}_{n-1}\equiv M[\theta_{n-1}]$ by applying Lemma \ref{M_order0} we have that $\mathbf{u}_{n-1} \in L^\infty(0,T;H^s)$ and by Lemma \ref{Fourier_properties} we have $\text{supp}(\widehat{\mathbf{u}_{n-1}}(t))\subset \mathrm{X}_{\mathfrak{C}}$ for $t\in[0,T)$. Therefore, all the conditions of Theorem \ref{local_existence_hyperdisipation} are satisfied, by letting $\mathbf{v} = \mathbf{u}_{n-1}$, and there exists a unique solution $\theta_n \in L^\infty(0,T;H^s) $ of (\ref{iteracion_n}), such that $\text{supp}(\widehat{\theta_n}(t)) \subset \mathrm{X}_{\mathfrak{C}}$ for $t\in [0,T)$. Moreover, using that $\theta_0$ has zero vertical mean on $\T^3$ the sequence $\{\theta_n\}_{n\geq 1}$ satisfies  the same by construction.\vspace{0.2 cm}

To prove that the sequence $\{\theta_n\}_{n\geq 1}$ converges, we first prove that it is bounded. To do it, we assume inductively that the following bound holds for all $1\leq j\leq n-1$ and proceed to prove that it holds for $j=n$.
\begin{equation}\label{induction_assumption}
||\Lambda^s\theta_j||_{L^{\infty}(0,T;L^2)}^2+||\sqrt{M_3}\,[\Lambda^s\theta_j]||_{L^{2}(0,T;L^2)}^2\leq 2\,||\Lambda^s\theta_0||_{L^2}^2.
\end{equation}
Applying $\Lambda^s$ to (\ref{iteracion_n}) and taking an $L^2$ inner product with $\Lambda^s\theta_n$ we obtain:
\begin{align*}
\tfrac{1}{2}\partial_t||\Lambda^s\theta_n||_{L^2}^2(t)+||\sqrt{M_3}\,[\Lambda^s \theta_n]||_{L^2}^2(t)&\leq ||\Lambda^s\theta_n||_{L^2}(t)\,||\left[\mathbf{u}_{n-1}\cdot\nabla,\Lambda^s\right]\theta_n||_{L^2}(t)\\
&\lesssim ||\Lambda^s\theta_n||_{L^2}(t)\left(||\nabla\mathbf{u}_{n-1}||_{L^{\infty}}||\Lambda^s\theta_n||_{L^2}+ ||\Lambda^s\mathbf{u}_{n-1}||_{L^2}||\nabla\theta_n||_{L^{\infty}}\right)(t)
\end{align*} 
and for $s>5/2$ we have that:
\begin{equation}\label{lwp_part1}
\tfrac{1}{2}\partial_t||\Lambda^s\theta_n||_{L^2}^2(t)+||\sqrt{M_3}\,[\Lambda^s \theta_n]||_{L^2}^2(t)\lesssim  ||\Lambda^s\theta_n||_{L^2}^2(t) \,||\Lambda^{s}\mathbf{u}_{n-1}||_{L^2}(t).
\end{equation}
Above, we have used the fact that $\nabla\cdot\mathbf{u}_{n-1}=0$ in order to write the commutator estimate and  the Sobolev embedding $L^{\infty}(\T^3)\hookrightarrow H^{3/2^{+}}(\T^3)$. Since $\mathbf{u}_{n-1}$ is obtained from $\theta_{n-1}$ by a bounded Fourier multiplier (cf. Lemma \ref{M_order0}) there exists a positive constant $\mathfrak{m}^{\star}$ such that:
\begin{equation}\label{lwp_part2}
||\Lambda^s\mathbf{u}_{n-1}||_{L^2}(t)\leq \mathfrak{m}^{\star}||\Lambda^s\theta_{n-1}||_{L^2}(t).
\end{equation}
In consequence, putting together (\ref{lwp_part1}) and (\ref{lwp_part2}) for $s>5/2$ we have proved that:
\begin{align*}
\tfrac{1}{2}\partial_t||\Lambda^s\theta_n||_{L^2}^2(t)+||\sqrt{M_3}\,[\Lambda^s \theta_n]||_{L^2}^2(t)&\leq C_s\,\mathfrak{m}^{\star}\, ||\Lambda^s\theta_n||_{L^2}^2(t)\,||\Lambda^s\theta_{n-1}||_{L^2}(t).
\end{align*}

By Corollary (\ref{M3_order_zero}) there exists two positive constants such that $\mathfrak{m}_{\star}\leq \widehat{M_3}(\mathbf{k})\leq \mathfrak{m}^{\star}$ for all $k\in\Z^3_{\star}$. Hence, applying H\"older's inequality we get:
\begin{align*}
\tfrac{1}{2}\partial_t||\Lambda^s\theta_n||_{L^2}^2(t)+||\sqrt{M_3}\,[\Lambda^s \theta_n]||_{L^2}^2(t)&\leq \frac{C_s\,\mathfrak{m}^{\star}}{\sqrt{\mathfrak{m}_{\star}}}\, ||\Lambda^s\theta_n||_{L^2}(t)\,||\Lambda^s\theta_{n-1}||_{L^2}(t)\,||\sqrt{M_3}\,[\Lambda^s\theta_n]||_{L^2}(t)\\
&\leq \frac{1}{2}\left(\frac{C_s\,\mathfrak{m}^{\star}}{\sqrt{\mathfrak{m}_{\star}}}\right)^2\, ||\Lambda^s\theta_n||_{L^2}^2(t)\,||\Lambda^s\theta_{n-1}||_{L^2}^2(t)+\frac{1}{2}||\sqrt{M_3}\,[\Lambda^s\theta_n]||_{L^2}^2(t).
\end{align*}
Using the inductive assumption (\ref{induction_assumption}), it follows for $t\in[0,T)$ that:
$$\partial_t||\Lambda^s\theta_n||_{L^2}^2(t)+||\sqrt{M_3}\,[\Lambda^s \theta_n]||_{L^2}^2(t)\leq \left(\frac{C_s\,\mathfrak{m}^{\star}}{\sqrt{\mathfrak{m}_{\star}}} ||\Lambda^s\theta_0||_{L^2}\right)^2\, ||\Lambda^s\theta_n||_{L^2}^2(t)$$
and applying Gr\"onwall's inequality, we arrive to:
$$||\Lambda^s\theta_n||_{L^2}^2(t)+\int_{0}^{t}||\sqrt{M_3}\,[\Lambda^s\theta_n]||_{L^2}^2(\tau)\,d\tau\leq \exp{\left[\left(\frac{C_s\,\mathfrak{m}^{\star}}{\sqrt{\mathfrak{m}_{\star}}} ||\Lambda^s\theta_0||_{L^2}\right)^2\, t\right]}||\Lambda^s\theta_0||_{L^2}^2.$$
Therefore, taking 
\begin{equation}\label{value_T_1}
T\leq \frac{\log 2 }{\left(\frac{C_s\,\mathfrak{m}^{\star}}{\sqrt{\mathfrak{m}_{\star}}} ||\Lambda^s\theta_0||_{L^2}\right)^2} 
\end{equation}
we obtain that (\ref{induction_assumption}) holds for $j=n$ and so by induction it holds for all $j\geq 1$. This shows that the sequence $\{\theta_n\}_{n\geq 1}$ is uniformly bounded in $L^{\infty}(0,T;H^s).$\vspace{0.2 cm}

Moreover, we may show that the sequence $\{\theta_n\}_{n\geq 1}$ is Cauchy in $L^{\infty}(0,T;H^{s-1})$. To see this, we consider the difference of two iterates $\widetilde{\theta}_n:=\theta_n-\theta_{n-1}$. It follows from (\ref{iteracion_n}) that $\widetilde{\theta}_n$ is a solution of:
\begin{equation}\label{difference_iterates}
\left\{
\begin{array}{rl}
\partial_t\widetilde{\theta}_n(\mathbf{x},t) +\mathbf{u}_{n-1}(\mathbf{x},t) \cdot\nabla\widetilde{\theta}_n(\mathbf{x},t) +\widetilde{\mathbf{u}}_{n-1}(\mathbf{x},t) \cdot\nabla\theta_{n-1}(\mathbf{x},t) &= -M_3[\widetilde{\theta}_n](\mathbf{x},t)   \\
\mathbf{u}_{n-1}(\mathbf{x},t) &=\phantom{-}M[\theta_{n-1}](\mathbf{x},t) \\
%\widetilde{\mathbf{u}}_{n-1}(\mathbf{x},t) &=\phantom{-}M[\theta_{n-1}-\theta_{n-2}](\mathbf{x},t) \\
\widetilde{\theta}_n(\mathbf{x},0) &=\phantom{-}0
\end{array}
\right.
\end{equation}
for all $n\geq 3$, where $\widetilde{\mathbf{u}}_{n}(\mathbf{x},t) :=M[\widetilde{\theta}_{n}](\mathbf{x},t)$. Applying $\Lambda^{s-1}$ to (\ref{difference_iterates}), taking an $L^2$ inner product with $\Lambda^{s-1}$ and using that $\nabla\cdot\mathbf{u}_{n-1}=0$, we arrive to:
\begin{equation}\label{lwp_parte_A}
\begin{aligned}
\tfrac{1}{2}\partial_t||\Lambda^{s-1}\widetilde{\theta}_n||_{L^2}^2(t)+||\sqrt{M_3}\,[\Lambda^{s-1}\widetilde{\theta}_n]||_{L^2}^2(t) &\leq   ||\Lambda^{s-1}\widetilde{\theta}_n||_{L^2}(t)\,||\left[\mathbf{u}_{n-1}\cdot\nabla,\Lambda^{s-1}\right]\widetilde{\theta}_n||_{L^2}(t)\\
&\quad + ||\Lambda^{s-1}\widetilde{\theta}_n||_{L^2}(t)\,||\Lambda^{s-1}(\widetilde{\mathbf{u}}_{n-1}\cdot\nabla\theta_{n-1})||_{L^2}(t). 
\end{aligned}
\end{equation}
Now, applying the Sobolev embeddings into the product and commutator estimate given by Lemma \ref{product_estimate} and Lemma \ref{commutator_estimate}, we obtain for $s>5/2$ that:
\begin{align*}
\bullet \quad ||\left[\mathbf{u}_{n-1}\cdot\nabla,\Lambda^{s-1}\right]\widetilde{\theta}_n||_{L^2}&\lesssim ||\nabla \mathbf{u}_{n-1}||_{L^{\infty}} ||\Lambda^{s-2}\nabla \widetilde{\theta}_n||_{L^2}+||\Lambda^{s-1}\mathbf{u}_{n-1}||_{L^6}||\nabla \widetilde{\theta}_n||_{L^{3}}\\
&\lesssim ||\Lambda^{5/2^{+}}\mathbf{u}_{n-1}||_{L^{2}}||\Lambda^{s-1} \widetilde{\theta}_n||_{L^2}+||\Lambda^{s}\mathbf{u}_{n-1}||_{L^2}||\Lambda^{3/2^{+}} \widetilde{\theta}_n||_{L^{2}}\\
&\lesssim ||\Lambda^{s}\mathbf{u}_{n-1}||_{L^{2}}||\Lambda^{s-1} \widetilde{\theta}_n||_{L^2}.\\[10 pt]
\bullet \quad ||\Lambda^{s-1}(\widetilde{\mathbf{u}}_{n-1}\cdot\nabla\theta_{n-1})||_{L^2}&\lesssim  ||\widetilde{\mathbf{u}}_{n-1}||_{L^{\infty}} ||\Lambda^{s-1}\nabla \theta_{n-1}||_{L^2}+||\Lambda^{s-1}\widetilde{\mathbf{u}}_{n-1}||_{L^2}||\nabla \theta_{n-1}||_{L^{\infty}}\\
&\lesssim ||\Lambda^{3/2^{+}}\widetilde{\mathbf{u}}_{n-1}||_{L^2} ||\Lambda^s\theta_{n-1}||_{L^2}+||\Lambda^{s-1}\widetilde{\mathbf{u}}_{n-1}||_{L^{2}}||\Lambda^{5/2^{+}}\theta_{n-1}||_{L^{2}}\\
&\lesssim ||\Lambda^{s-1}\widetilde{\mathbf{u}}_{n-1}||_{L^{2}} ||\Lambda^s\theta_{n-1}||_{L^2}.
\end{align*}
Combining these two inequalities with Lemma \ref{M_order0} and Corollary \ref{M3_order_zero}  we have proved that there exists two positive constants satisfying $\mathfrak{m}_{\star}\leq \widehat{M_3}(\mathbf{k})\leq \mathfrak{m}^{\star}$ for all $k\in\Z^3_{\star}$ such that:
\begin{align*}
||\left[\mathbf{u}_{n-1}\cdot\nabla,\Lambda^{s-1}\right]\widetilde{\theta}_n||_{L^2}+||\Lambda^{s-1}(\widetilde{\mathbf{u}}_{n-1}\cdot\nabla\theta_{n-1})||_{L^2}&\lesssim \mathfrak{m}^{\star} ||\Lambda^s \theta_{n-1}||_{L^2} \left(||\Lambda^{s-1}\widetilde{\theta}_n||_{L^2}+||\Lambda^{s-1} \widetilde{\theta}_{n-1}||_{L^2} \right).
\end{align*}
Hence, combining this estimate with (\ref{lwp_parte_A}), we arrive to:
\begin{align*}
\tfrac{1}{2}\partial_t||\Lambda^{s-1}\widetilde{\theta}_n||_{L^2}^2+&||\sqrt{M_3}\,[\Lambda^{s-1}\widetilde{\theta}_n]||_{L^2}^2 \leq C_s  \mathfrak{m}^{\star} ||\Lambda^{s-1}\widetilde{\theta}_n||_{L^2} ||\Lambda^s \theta_{n-1}||_{L^2} \left(||\Lambda^{s-1}\widetilde{\theta}_n||_{L^2}+||\Lambda^{s-1} \widetilde{\theta}_{n-1}||_{L^2} \right)\\
&\leq \frac{C_s \mathfrak{m}^{\star}}{\sqrt{\mathfrak{m}_{\star}}}||\sqrt{M_3}\,[\Lambda^{s-1}\widetilde{\theta}_n]||_{L^2} ||\Lambda^s \theta_{n-1}||_{L^2} \left(||\Lambda^{s-1}\widetilde{\theta}_n||_{L^2}+||\Lambda^{s-1} \widetilde{\theta}_{n-1}||_{L^2} \right)\\
&\leq ||\sqrt{M_3}\,[\Lambda^{s-1}\widetilde{\theta}_n]||_{L^2}^2+\frac{1}{2}\left(\frac{C_s \mathfrak{m}^{\star}}{\sqrt{\mathfrak{m}_{\star}}}\right)^2||\Lambda^s \theta_{n-1}||_{L^2}^2\left(||\Lambda^{s-1}\widetilde{\theta}_n||_{L^2}^2+||\Lambda^{s-1}\widetilde{\theta}_{n-1}||_{L^2}^2\right)
%&\leq \frac{1}{2}||\sqrt{M_3}\,[\Lambda^{s-1}\widetilde{\theta}_n]||_{L^2}^2+\frac{1}{2}\left(\frac{C_s \mathfrak{m}^{\star}}{\sqrt{\mathfrak{m}_{\star}}}\right)^2||\Lambda^s \theta_{n-1}||_{L^2}^2||\Lambda^{s-1}\widetilde{\theta}_n||_{L^2}^2\\
%&+\frac{1}{2}||\sqrt{M_3}\,[\Lambda^{s-1}\widetilde{\theta}_n]||_{L^2}^2+\frac{1}{2}\left(\frac{C_s \mathfrak{m}^{\star}}{\sqrt{\mathfrak{m}_{\star}}}\right)^2||\Lambda^s \theta_{n-1}||_{L^2}^2||\Lambda^{s-1}\widetilde{\theta}_{n-1}||_{L^2}^2
\end{align*}
and, as consequence of (\ref{induction_assumption}) we get:
\begin{align*}
\partial_t||\Lambda^{s-1}\widetilde{\theta}_n||_{L^2}^2(t)&\leq \left(\frac{C_s \mathfrak{m}^{\star}}{\sqrt{\mathfrak{m}_{\star}}}\right)^2||\Lambda^s \theta_{n-1}||_{L^2}^2(t)\left(||\Lambda^{s-1}\widetilde{\theta}_n||_{L^2}^2(t)+||\Lambda^{s-1}\widetilde{\theta}_{n-1}||_{L^2}^2(t)\right)\\
&\leq 2\left(\frac{C_s \mathfrak{m}^{\star}}{\sqrt{\mathfrak{m}_{\star}}}\right)^2||\Lambda^s \theta_{0}||_{L^2}^2\left(||\Lambda^{s-1}\widetilde{\theta}_n||_{L^2}^2(t)+||\Lambda^{s-1}\widetilde{\theta}_{n-1}||_{L^2}^2(t)\right).
\end{align*}
Hence, applying Gr\"onwall's inequality (see \cite[p.~624]{Evans}) we obtain that:
%$$||\Lambda^{s-1}\widetilde{\theta}_n||_{L^2}^2(t)\leq \exp{\left[2\left(\frac{C_s\,\mathfrak{m}^{\star}}{\sqrt{\mathfrak{m}_{\star}}}  ||\Lambda^s\theta_{0}||_{L^2}\right)^2\,t\right]} \left(||\Lambda^{s-1}\widetilde{\theta}_n||_{L^2}^2(0)+2\left(\frac{C_s\,\mathfrak{m}^{\star}}{\sqrt{\mathfrak{m}_{\star}}}  ||\Lambda^s\theta_{0}||_{L^2}\right)^2\int_{0}^{t}||\Lambda^{s-1}\widetilde{\theta}_{n-1}||_{L^2}^2(\tau)\,d\tau\right).$$
\begin{align*}
||\Lambda^{s-1}\widetilde{\theta}_n||_{L^2}^2(t)&\leq \exp{\left[2\left(\frac{C_s\,\mathfrak{m}^{\star}}{\sqrt{\mathfrak{m}_{\star}}}  ||\Lambda^s\theta_{0}||_{L^2}\right)^2\,t\right]} ||\Lambda^{s-1}\widetilde{\theta}_n||_{L^2}^2(0)\\
&+\exp{\left[2\left(\frac{C_s\,\mathfrak{m}^{\star}}{\sqrt{\mathfrak{m}_{\star}}}  ||\Lambda^s\theta_{0}||_{L^2}\right)^2\,t\right]} 2\left(\frac{C_s\,\mathfrak{m}^{\star}}{\sqrt{\mathfrak{m}_{\star}}}  ||\Lambda^s\theta_{0}||_{L^2}\right)^2\int_{0}^{t}||\Lambda^{s-1}\widetilde{\theta}_{n-1}||_{L^2}^2(\tau)\,d\tau
\end{align*}
and as by definition $\widetilde{\theta}_n(\mathbf{x},0)=0$, we have for $0\leq t\leq T$ that:
$$||\Lambda^{s-1}\widetilde{\theta}_n||_{L^2}^2(t)\leq ||\Lambda^{s-1}\widetilde{\theta}_{n-1}||_{L^{\infty}(0,T;L^2)}^2 \exp{\left[2\left(\frac{C_s\,\mathfrak{m}^{\star}}{\sqrt{\mathfrak{m}_{\star}}}  ||\Lambda^s\theta_{0}||_{L^2}\right)^2\,T\right]} 2\left(\frac{C_s\,\mathfrak{m}^{\star}}{\sqrt{\mathfrak{m}_{\star}}}  ||\Lambda^s\theta_{0}||_{L^2}\right)^2\,T.$$
Here, after recalling (\ref{value_T_1}), if we let $T$ be such that:
$$T:=\frac{\min\{\log 2,1/6\}}{\left(\frac{C_s\,\mathfrak{m}^{\star}}{\sqrt{\mathfrak{m}_{\star}}} ||\Lambda^s\theta_0||_{L^2}\right)^2}$$
%$$T:=\min\left\lbrace \frac{\log 2 }{\left(\frac{C_s\,\mathfrak{m}^{\star}}{\sqrt{\mathfrak{m}_{\star}}} ||\Lambda^s\theta_0||_{L^2}\right)^2} ,\Blue{???}\right\rbrace$$
we obtain that:
$$||\Lambda^{s-1}\widetilde{\theta}_n||_{L^{\infty}(0,T;L^2)}^2\leq \frac{1}{2}||\Lambda^{s-1}\widetilde{\theta}_{n-1}||_{L^{\infty}(0,T;L^2)}^2.$$
Consequently $\{\theta_n\}_{n\geq 1}$ is Cauchy in $L^{\infty}(0,T;H^{s-1})$ and hence $\theta_n$ converges strongly to $\theta$ in $L^{\infty}(0,T;H^{s-1})$. Nothing that $s-1>3/2$, this shows that the strong convergence occurs in a H\"older space, which is sufficient to prove that the limiting function $\theta\in L^{\infty}(0,T;H^s)$ is a solution of the initial value problem (\ref{problema_perturbado}).

To conclude the proof of the theorem we prove the uniqueness.  We note that if $\theta^{(1)}$ and $\theta^{(2)}$ are two solutions of (\ref{problema_perturbado}), then $\theta^{\sharp}=\theta^{(1)}-\theta^{(2)}$ solves
\begin{equation}\label{problema_unicidad}
\left\{
\begin{array}{rl}
\partial_t\theta^{\sharp}(\mathbf{x},t)+\mathbf{u}^{(1)}(\mathbf{x},t)\cdot\nabla\theta^{\sharp}(\mathbf{x},t)+\mathbf{u}^{\sharp}(\mathbf{x},t)\cdot\nabla\theta^{(2)}(\mathbf{x},t)&=-M_3[\theta^{\sharp}](\mathbf{x},t) \\
\mathbf{u}^{\sharp}(\mathbf{x},t)&=\phantom{-}M[\theta^{\sharp}](\mathbf{x},t)\\
\theta^{\sharp}(\mathbf{x},0)&=\phantom{-}0.
\end{array}
\right.
\end{equation}
An $L^2$ estimate on (\ref{problema_unicidad}) shows that $\theta^{\sharp}(\mathbf{x},t)=0$ for all $t\in[0,T)$, since $\theta^{(2)}\in L^{\infty}(0,T;H^{5/2^{+}})$ and the frequency support of $\theta^{\sharp}$ belongs to $\mathrm{X}_{\mathfrak{C}}$, due to the fact that $\text{supp}(\widehat{\theta^{(i)}}(t))\subset \mathrm{X}_{\mathfrak{C}}$ for $t\in [0,T)$ and \nolinebreak $i=1,2.$
\end{proof}

\section{Global existence in $H^{5/2^{+}}(\T^3)$ for frequency-localized initial data}\label{Sec_4}
This section is devoted to prove the main result of this paper:

\begin{thm}\label{main_thm}
Fixed $\mathfrak{C}>0$ and the frequency straight line $\mathrm{X}_{\mathfrak{C}}$. Let $\theta_0\in H^s(\T^3)$ with zero vertical mean and frequency support in $\mathrm{X}_{\mathfrak{C}}$ such that $||\theta_0||_{H^{\kappa}}\leq \epsilon_0$ where $\epsilon_0$ is give by (\ref{epsilon_0_def}) and $\kappa:=\tfrac{1}{\alpha}+\tfrac{5}{2}^{+}$ for $\alpha\in(0,1)$. Then, the solution of the non-diffusive MG equation (\ref{active_scalar}) with  initial datum $\Theta(\mathbf{x},0)=\Omega(x_3)+\theta_0(\mathbf{x})$ exists globally in time and satisfies the following exponential decay to the steady state:
$$||\Theta-\Omega||_{H^s}(t)\equiv||\theta||_{H^s}(t)\lesssim ||\theta_0||_{H^s} \exp(-\mathfrak{m}_{\star}t).$$
\end{thm}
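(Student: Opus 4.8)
The proof is a continuation/bootstrap argument built on the local theory of Theorem~\ref{local_existence} together with the coercivity of $M_3$ on the frequency line. For $\Omega(x_3)=x_3$ the perturbation $\theta=\Theta-\Omega$ solves (\ref{problema_perturbado}) with $u_3\,\Omega'=M_3[\theta]$, and since $\text{supp}(\widehat{\theta_0})\subset\mathrm{X}_{\mathfrak{C}}$, Lemma~\ref{Fourier_properties} shows this property is propagated by the flow: $\mathbf{u}\cdot\nabla\theta$, hence $\theta(t)$, remains frequency-supported in $\mathrm{X}_{\mathfrak{C}}$ for as long as the solution exists. Consequently, throughout the evolution we may invoke Lemma~\ref{M_order0} to treat $\mathbf{u}=M[\theta]$ as an order-zero operator, $\|\Lambda^{\sigma}\mathbf{u}\|_{L^2}\le\mathfrak{m}^{\star}\|\Lambda^{\sigma}\theta\|_{L^2}$ at every level $\sigma$, and Corollary~\ref{M3_order_zero} to obtain $\|\sqrt{M_3}\,[\Lambda^{\sigma}\theta]\|_{L^2}^2\ge\mathfrak{m}_{\star}\|\Lambda^{\sigma}\theta\|_{L^2}^2$. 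This coercivity of the damping term is the substitute, in the non-diffusive regime, for the missing Laplacian, and it is precisely what is unavailable for generic (non-localized) data.

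First I would record the energy identities of Section~\ref{Sec_4.1}. At the $L^2$ level, $\nabla\cdot\mathbf{u}=0$ annihilates the transport term and gives $\tfrac{d}{dt}\|\theta\|_{L^2}^2+2\|\sqrt{M_3}\,\theta\|_{L^2}^2=0$, hence $\|\theta\|_{L^2}(t)\le e^{-\mathfrak{m}_{\star}t}\|\theta_0\|_{L^2}$ with no smallness needed. At level $H^{\sigma}$ (to be applied with $\sigma=\kappa$ and $\sigma=s$), applying $\Lambda^{\sigma}$, pairing with $\Lambda^{\sigma}\theta$, cancelling the top-order transport term by incompressibility, and using the commutator estimate of Lemma~\ref{commutator_estimate}, the order-zero bound on $\mathbf{u}$, and $H^{3/2^{+}}\hookrightarrow L^{\infty}$ to dominate $\|\nabla\mathbf{u}\|_{L^{\infty}}$ and $\|\nabla\theta\|_{L^{\infty}}$ by $\|\theta\|_{H^{5/2^{+}}}$, one gets
\begin{equation*}
\tfrac{d}{dt}\|\theta\|_{H^{\sigma}}^2+2\mathfrak{m}_{\star}\|\theta\|_{H^{\sigma}}^2\le C_{\sigma}\,\mathfrak{m}^{\star}\,\|\theta\|_{H^{5/2^{+}}}\,\|\theta\|_{H^{\sigma}}^2 .
\end{equation*}
The decay generated by the linear semigroup $e^{-M_3 t}$ is the content of Section~\ref{Sec_4.2}.

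The bootstrap of Section~\ref{Sec_4.3} then runs as follows. On a maximal interval $[0,T^{\ast})$ assume the a priori bound $\|\theta\|_{H^{\kappa}}(t)\le 2\epsilon_0$. Interpolating $\|\theta\|_{H^{5/2^{+}}}\lesssim\|\theta\|_{L^2}^{1-\vartheta}\,\|\theta\|_{H^{\kappa}}^{\vartheta}$ and inserting the unconditional $L^2$-decay shows that $\|\theta\|_{H^{5/2^{+}}}(t)$ itself decays exponentially and that $\int_0^{\infty}\|\theta\|_{H^{5/2^{+}}}(\tau)\,d\tau\le C\,\epsilon_0/\mathfrak{m}_{\star}$; the exponent bookkeeping in this interpolation, combined with the derivative loss in the commutator estimate, is what fixes the admissible threshold $\kappa=\tfrac{1}{\alpha}+\tfrac{5}{2}^{+}$ in the statement. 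Feeding this integral bound into the $H^{\kappa}$ inequality and Gr\"onwall gives $\|\theta\|_{H^{\kappa}}^2(t)\le\epsilon_0^2\exp\!\left(C\,\mathfrak{m}^{\star}\epsilon_0/\mathfrak{m}_{\star}\right)$, so choosing $\epsilon_0$ small enough (as in (\ref{epsilon_0_def})) forces $\|\theta\|_{H^{\kappa}}(t)\le\tfrac{3}{2}\epsilon_0$, closing the bootstrap and showing $T^{\ast}=\infty$; in particular $\|\theta\|_{H^s}$ cannot blow up in finite time, so the local solution of Theorem~\ref{local_existence} extends globally. Feeding the same integral bound into the $H^s$ inequality and Gr\"onwall yields
\begin{equation*}
\|\theta\|_{H^s}^2(t)\le\|\theta_0\|_{H^s}^2\exp\!\left(-2\mathfrak{m}_{\star}t+C_s\,\mathfrak{m}^{\star}\!\int_0^{t}\|\theta\|_{H^{5/2^{+}}}(\tau)\,d\tau\right)\lesssim\|\theta_0\|_{H^s}^2\,e^{-2\mathfrak{m}_{\star}t},
\end{equation*}
which, through $\Theta=\Omega+\theta$, is exactly the claimed exponential convergence to $\Omega(x_3)=x_3$.

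The principal obstacle is closing this bootstrap: since $M_3\sim\mathrm{Id}$ rather than a genuine fractional Laplacian, the damping furnishes coercivity but \emph{no} smoothing, so nothing can absorb the derivative loss hidden in $\mathbf{u}\cdot\nabla\theta$ except the a priori smallness, and one must arrange $\epsilon_0$ and the regularity index $\kappa$ so that the interpolated quantity $\int_0^{\infty}\|\theta\|_{H^{5/2^{+}}}$ is at once finite and quantitatively small. A secondary but indispensable point is that every estimate above is legitimate only while the frequency support remains inside $\mathrm{X}_{\mathfrak{C}}$: losing this would reinstate the unbounded behaviour (\ref{eq:unbounded:symbol}) of $\mathbf{M}$ and destroy both the coercivity of $M_3$ and the order-zero bound on $\mathbf{u}$ on which the whole scheme rests.
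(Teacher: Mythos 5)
Your proposal is correct and reproduces the paper's overall architecture (conditional energy inequality with damping $-\mathfrak{m}_{\star}\|\theta\|_{H^\sigma}^2$ plus a perturbation proportional to $\|\theta\|_{H^{5/2^{+}}}\|\theta\|_{H^\sigma}^2$, a bootstrap on the smallness of $\|\theta\|_{H^{\kappa}}$, and a final Gr\"onwall step for $\|\theta\|_{H^s}$), but the central lemma --- the exponential decay of $\|\theta\|_{H^{5/2^{+}}}$ --- is obtained by a genuinely different route. The paper (Lemma \ref{decay_h^5/2}) runs Duhamel's formula (\ref{Duhamel's_formula}) against the semigroup $e^{-M_3 t}$, estimates the nonlinearity by $\|\mathbf{u}\cdot\nabla\theta\|_{H^{5/2^{+}}}\lesssim \mathfrak{m}^{\star}\|\theta\|_{H^{5/2^{+}}}\|\theta\|_{H^{7/2^{+}}}$, and then interpolates $H^{7/2^{+}}$ between $H^{5/2^{+}}$ and $H^{\kappa}$ with exponents $1-\alpha,\alpha$ --- which is precisely what forces $\kappa=\tfrac{1}{\alpha}+\tfrac{5}{2}^{+}$ --- followed by a nested continuity argument inside the lemma. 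You instead interpolate $H^{5/2^{+}}$ between $L^2$ and $H^{\kappa}$ and feed in the \emph{unconditional} exponential $L^2$ decay coming from (\ref{L2_norm}) and the coercivity of $M_3$ on $\mathrm{X}_{\mathfrak{C}}$. Your route is more elementary (no Duhamel, no inner bootstrap) and still yields an integrable, $O(\epsilon_0)$ tail for $\int_0^{\infty}\|\theta\|_{H^{5/2^{+}}}$, which is all the $H^{\kappa}$ and $H^s$ Gr\"onwall steps need; the price is a reduced decay rate $(1-\vartheta)\mathfrak{m}_{\star}$ with $\vartheta=\tfrac{5/2^{+}}{\kappa}$ for the intermediate norm, whereas the paper retains the full rate $\mathfrak{m}_{\star}$ at that stage (the final $H^s$ rate is $\mathfrak{m}_{\star}$ either way, since it comes from the damping term once the perturbation is integrable). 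One small correction to your narrative: in your scheme the specific value $\kappa=\tfrac{1}{\alpha}+\tfrac{5}{2}^{+}$ is not actually forced by the exponent bookkeeping --- any $\kappa>\tfrac{5}{2}^{+}$ would do --- that constraint is an artifact of the paper's interpolation at the $H^{7/2^{+}}$ level.
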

In the next sections we give the proof of this result.

\subsection{Energy methods for the MG equation}\label{Sec_4.1}
For $s>5/2$ and initial data $\theta_0\in H^s(\T^3)$ with zero vertical mean and $\text{supp}(\widehat{\theta_0})\subset \mathrm{X}_{\mathfrak{C}}$, there exists $T>0$ such that $\theta(t)\in H^s(\T^3)$ and  $\text{supp}(\widehat{\theta}(t))\subset \mathrm{X}_{\mathfrak{C}}$ for all $t\in[0,T)$.

\subsubsection{A priori energy estimates}
In what follows, we assume that $\theta(t)\in H^s(\T^3)$ is a solution of (\ref{problema_perturbado}) and  the frequency support $\text{supp}(\widehat{\theta}(t))\subset \mathrm{X}_{\mathfrak{C}}$ for any $t\geq 0$. Then, the following estimate holds:

\begin{equation*}
\partial_t||\theta||_{H^s}^2(t)\lesssim -\left[1-C\,||\theta||_{H^{5/2^{+}}}(t)\right]||\theta||_{H^s}^2(t).
\end{equation*}
First of all, we will perform the basic $H^s$-energy estimate for
\begin{equation}\label{Evolucion_theta}
%\left\{
%\begin{array}{rl}
\partial_t\theta(\mathbf{x},t) +\mathbf{u}(\mathbf{x},t)\cdot\nabla\theta(\mathbf{x},t) = -M_3[\theta](\mathbf{x},t)  \\
%\mathbf{u}(\mathbf{x},t)&=\phantom{-}M[\theta](\mathbf{x},t)
%\end{array}
%\right.
\end{equation}
where $\mathbf{u}(\mathbf{x},t)=\mathbf{M}[\theta](\mathbf{x},t)$ and the initial data $\theta_0(\mathbf{x})$ has zero vertical mean and frequency support in $\mathrm{X}_{\mathfrak{C}}$.\vspace{0.2 cm}

\noindent
\textbf{$L^2$-estimate:} We multiply (\ref{Evolucion_theta}) by $\theta$ and integrate over $\mathbb{T}^3$. Then:
$$\tfrac{1}{2}\partial_t ||\theta||_{L^2}^2=-\int_{\mathbb{T}^3} \theta\, M_3[\theta]\,d\mathbf{x}-\int_{\mathbb{T}^3}\theta\, \left(\mathbf{u}\cdot\nabla\right)\theta\,d\mathbf{x}.$$
Therefore, using Plancherel's theorem and (\ref{square_root_M3}), we obtain that:
\begin{equation}\label{L2_norm}
\tfrac{1}{2}\partial_t ||\theta||_{L^2}^2=-\sum_{\mathbf{k}\in \Z_{\star}^{3}}  \widehat{M_3}(\mathbf{k})\,|\hat{\theta}(\mathbf{k})|^2=-||\sqrt{M_3}\,[\theta]||_{L^2}^2.
\end{equation}

\noindent
\textbf{$\dot{H}^s$-estimate:}
Applying $\Lambda^s$ to (\ref{Evolucion_theta}) and taking an $L^2$ inner product with $\Lambda^s\theta$ we obtain:
\begin{align*}
\tfrac{1}{2}\partial_t ||\theta||_{\dot{H}^s}^2&=-\int_{\mathbb{T}^3}\Lambda^s\theta\, M_3[\Lambda^s\theta]\,d\mathbf{x}-\int_{\mathbb{T}^3}\Lambda^s\theta\,\Lambda^s[(\mathbf{u}\cdot\nabla)\theta]\, d\mathbf{x}=I_1+I_2.
\end{align*}
First of all we study $I_1$. As before, by Plancherel's theorem and the square roor of $M_3$ given by (\ref{square_root_M3}) we get:
\begin{equation}\label{I1}
I_1=-\sum_{\mathbf{k}\in \Z_{\star}^{3}}  \widehat{M_3}(\mathbf{k})\,|\widehat{\Lambda^s\theta}(\mathbf{k})|^2=-||\sqrt{M_3}\,[\Lambda^s\theta]||_{L^2}^2=-||\sqrt{M_3}\,[\theta]||_{\dot{H}^s}^2.
\end{equation}
%
%\noindent
Secondly, we study $I_2$. Below, we use the fact that $\nabla\cdot\mathbf{u}=0$ in order to obtain a commutator operator:
\begin{align*}
I_2&=-\int_{\mathbb{T}^3}\Lambda^s\theta\,\Lambda^s[(\mathbf{u}\cdot\nabla)\theta]\, d\mathbf{x}\,\, \pm\,\,\int_{\mathbb{T}^3}\Lambda^s\theta\,(\mathbf{u}\cdot\nabla)\Lambda^s\theta\, d\mathbf{x}=-\int_{\mathbb{T}^3}\Lambda^s\theta\,\left[\Lambda^s,\mathbf{u}\cdot\nabla\right]\theta\,d\mathbf{x}.
\end{align*}
Hence, using the commutator estimate (\ref{eq:prop:commutator}) and the Sobolev embedding $L^{\infty}(\T^3)\hookrightarrow H^{3/2^{+}}(\T^3)$ we arrive to:
\begin{align*}
I_2&\leq ||\Lambda^s\theta||_{L^2}\,||\left[\Lambda^s,\mathbf{u}\cdot\nabla\right]\theta||_{L^2}\lesssim  ||\Lambda^s\theta||_{L^2}\left(||\nabla \mathbf{u}||_{L^{\infty}}\,||\Lambda^{s-1}\nabla\theta||_{L^2} +||\Lambda^s\mathbf{u}||_{L^2}\,||\nabla\theta||_{L^{\infty}}\right)\\
&\lesssim ||\theta||_{\dot{H}^s}\left(||\mathbf{u}||_{H^{5/2^{+}}}\,||\theta||_{\dot{H}^s} +||\mathbf{u}||_{\dot{H}^s}\,||\theta||_{H^{5/2^{+}}}  \right).
\end{align*}
Since $\text{supp}(\widehat{\theta}(t))\subset \mathrm{X}_{\mathfrak{C}}$ as lons as the solution exists, applying Lemma \ref{M_order0} we have that the Fourier operator $\widehat{\textbf{M}}(\mathbf{k})$ restricted to $\mathbf{k}\in\mathrm{X}_{\mathfrak{C}}$ behaves like a zero order operator. In particular, for $s>5/2$ we have that:
\begin{equation}\label{I2}
I_2\leq C_s \mathfrak{m}^{\star} ||\theta||_{\dot{H}^s}^2\,||\theta||_{H^{5/2^{+}}} 
\end{equation}
where $\mathfrak{m}^{\star}(\mathfrak{C})$ blows-up as $\mathfrak{C}$ tends to infinity.
Putting together (\ref{I1}) and (\ref{I2}), for $s>5/2$ we have that:
\begin{equation}\label{Hs-norm}
\tfrac{1}{2}\partial_t ||\theta||_{\dot{H}^s}^2\leq C_s \mathfrak{m}^{\star} ||\theta||_{\dot{H}^s}^2\,||\theta||_{H^{5/2^{+}}}-||\sqrt{M_3}\,[\theta]||_{\dot{H}^s}^2.
\end{equation}

To sum up, we have proved the next energy estimate.

\begin{thm}\label{energy_estimate} 
Let $\theta(t)\in H^s(\T^3)$ be a solution of (\ref{problema_perturbado}) with zero mean and $\text{supp}(\hat{\theta}(t))\subset \mathrm{X}_{\mathfrak{C}}$ for any $t\geq 0$. Then, for $s>5/2$ the following estimate holds:
\begin{equation*}
\tfrac{1}{2}\partial_t ||\theta||_{H^s}^2(t)\leq -\mathfrak{m}_{\star}\left[1-\left(\tfrac{C_s \mathfrak{m}^{\star}}{\mathfrak{m}_{\star}}\right)||\theta||_{H^{5/2^{+}}}(t)\right] ||\theta||_{H^s}^2(t).
\end{equation*}
\end{thm}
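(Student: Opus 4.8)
The plan is simply to assemble the two energy identities established above and then upgrade both dissipative terms to full Sobolev norms using the frequency localization. Work with the norm $\|\theta\|_{H^s}^2=\|\theta\|_{L^2}^2+\|\theta\|_{\dot H^s}^2$.

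First I would record the $L^2$ identity: testing (\ref{Evolucion_theta}) against $\theta$, integrating by parts in the transport term with $\nabla\cdot\mathbf u=0$, and applying Plancherel together with Definition \ref{sqrt_M3}, one gets (\ref{L2_norm}), i.e. $\tfrac12\partial_t\|\theta\|_{L^2}^2=-\|\sqrt{M_3}\,[\theta]\|_{L^2}^2$. Next the $\dot H^s$ estimate: applying $\Lambda^s$ to (\ref{Evolucion_theta}), pairing with $\Lambda^s\theta$, and splitting into $I_1+I_2$, the term $I_1$ is diagonal in Fourier and equals $-\|\sqrt{M_3}\,[\theta]\|_{\dot H^s}^2$ by (\ref{I1}). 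For $I_2$ — the only genuinely delicate point — I would again use $\nabla\cdot\mathbf u=0$ to produce the commutator $I_2=-\int_{\T^3}\Lambda^s\theta\,[\Lambda^s,\mathbf u\cdot\nabla]\theta$, estimate it via Lemma \ref{commutator_estimate} and the embedding $L^\infty(\T^3)\hookrightarrow H^{3/2^+}(\T^3)$ (admissible since $s>5/2$), bounding it by $\|\theta\|_{\dot H^s}\bigl(\|\mathbf u\|_{H^{5/2^+}}\|\theta\|_{\dot H^s}+\|\mathbf u\|_{\dot H^s}\|\theta\|_{H^{5/2^+}}\bigr)$, and then invoke Lemma \ref{M_order0}: since $\mathrm{supp}(\widehat\theta(t))\subset\mathrm X_{\mathfrak C}$, the order-one operator $\mathbf M$ acts on $\theta$ as a bounded multiplier, so $\|\mathbf u\|_{\dot H^r}\le\mathfrak m^\star\|\theta\|_{\dot H^r}$ for every $r$. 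This gives $I_2\le C_s\mathfrak m^\star\|\theta\|_{\dot H^s}^2\|\theta\|_{H^{5/2^+}}$ and hence (\ref{Hs-norm}).

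Finally I would close the estimate. Adding (\ref{L2_norm}) and (\ref{Hs-norm}) yields $\tfrac12\partial_t\|\theta\|_{H^s}^2\le C_s\mathfrak m^\star\|\theta\|_{\dot H^s}^2\|\theta\|_{H^{5/2^+}}-\|\sqrt{M_3}\,[\theta]\|_{L^2}^2-\|\sqrt{M_3}\,[\theta]\|_{\dot H^s}^2$. Here the spectral hypothesis enters once more: by Corollary \ref{M3_order_zero} we have $\widehat{M_3}(\mathbf k)\ge\mathfrak m_\star$ on $\mathrm X_{\mathfrak C}$, so Plancherel gives $\|\sqrt{M_3}\,[\theta]\|_{L^2}^2\ge\mathfrak m_\star\|\theta\|_{L^2}^2$ and $\|\sqrt{M_3}\,[\theta]\|_{\dot H^s}^2\ge\mathfrak m_\star\|\theta\|_{\dot H^s}^2$. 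Combining, then using $\|\theta\|_{\dot H^s}^2\le\|\theta\|_{H^s}^2$ to factor out $\mathfrak m_\star\|\theta\|_{H^s}^2$, produces the claimed inequality. The main obstacle is the control of $I_2$: one must simultaneously exploit the commutator cancellation coming from incompressibility and the fact that $\mathbf M$ behaves like a bounded multiplier on functions spectrally supported in the rational cone $\mathrm K_{\mathfrak C}$; the remaining steps are bookkeeping with Sobolev embeddings and Plancherel.
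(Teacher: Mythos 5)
Your proposal is correct and follows essentially the same route as the paper: the $L^2$ identity, the $\dot H^s$ commutator estimate combined with Lemma \ref{M_order0} to treat $\mathbf{M}$ as a bounded multiplier on $\mathrm{X}_{\mathfrak{C}}$, and the lower bound $\widehat{M_3}\geq\mathfrak{m}_{\star}$ from Corollary \ref{M3_order_zero} to convert the dissipative term into $-\mathfrak{m}_{\star}\|\theta\|_{H^s}^2$. The only cosmetic difference is that you make explicit the step $\|\theta\|_{\dot H^s}^2\leq\|\theta\|_{H^s}^2$ when absorbing the nonlinear term, which the paper leaves implicit.
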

\begin{proof}
Putting together (\ref{L2_norm}) with (\ref{Hs-norm}) we arrive to: 
$$\tfrac{1}{2}\partial_t ||\theta||_{H^s}^2(t)\leq C_s \mathfrak{m}^{\star} ||\theta||_{H^s}^2(t)\,||\theta||_{H^{5/2^{+}}}(t)-||\sqrt{M_3}\,[\theta]||_{H^s}^2(t).$$
Using that $\text{supp}(\hat{\theta}(t))\subset \mathrm{X}_{\mathfrak{C}}$ for any $t\geq 0$ and Lemma \ref{M3_order_zero}, we obtain that:
$$\tfrac{1}{2}\partial_t ||\theta||_{H^s}^2(t)\leq C_s \mathfrak{m}^{\star} ||\theta||_{H^s}^2(t)\,||\theta||_{H^{5/2^{+}}}(t)-\mathfrak{m}_{\star}||\theta||_{H^s}^2(t)$$
where $\mathfrak{m}_{\star}(\mathfrak{C})$ goes to zero as $\mathfrak{C}$ tends to infinity.  Rewriting it, we have achieved our goal.
%
%Rewriting it in a more appropriate way, we have proved our goal.
\end{proof}
So, as consequence, we establish a ``small'' data global existence result.

\begin{cor}
Fixed $s>5/2$. Let $\theta_0\in H^s(\T^3)$ with zero vertical mean and $\text{supp}(\widehat{\theta}_0)\subset\mathrm{X}_{\mathfrak{C}}$ such that $||\theta_0||_{H^{5/2^{+}}}\leq \varepsilon$ small enough. Then, the solution exists globally in time and satisfies a maximum principle:
$$||\theta||_{H^s}(t)\leq ||\theta_0||_{H^s}.$$
\end{cor}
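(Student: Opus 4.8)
The plan is to bootstrap the differential inequality of Theorem \ref{energy_estimate} into a global bound through a two-tiered continuation argument. First I would start from the local solution of (\ref{problema_perturbado}) produced by Theorem \ref{local_existence}, taken with maximal existence time $T_{\max}\in(0,\infty]$, which automatically comes with the preserved structural properties $\text{supp}(\widehat{\theta}(t))\subset\mathrm{X}_{\mathfrak{C}}$ and zero vertical mean for all $t<T_{\max}$. On $[0,T_{\max})$ every hypothesis of Theorem \ref{energy_estimate} holds not only at regularity $s$ but also at regularity $\sigma:=5/2^{+}$ (which is itself $>5/2$ and, after shrinking the ``$+$'', satisfies $\sigma\le s$, so that $\theta(t)\in H^{\sigma}$ makes sense). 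Thus I would record the two resulting estimates $\tfrac12\partial_t||\theta||_{H^{\sigma}}^2\le-\mathfrak{m}_{\star}[1-(C_{\sigma}\mathfrak{m}^{\star}/\mathfrak{m}_{\star})||\theta||_{H^{\sigma}}]||\theta||_{H^{\sigma}}^2$ and $\tfrac12\partial_t||\theta||_{H^{s}}^2\le-\mathfrak{m}_{\star}[1-(C_{s}\mathfrak{m}^{\star}/\mathfrak{m}_{\star})||\theta||_{H^{\sigma}}]||\theta||_{H^{s}}^2$, and fix the smallness threshold $\varepsilon>0$ so that $\big(\max\{C_{\sigma},C_{s}\}\,\mathfrak{m}^{\star}/\mathfrak{m}_{\star}\big)\,\varepsilon<1$.

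The first tier propagates smallness at the critical regularity $\sigma$: I would set $T^{*}:=\sup\{t<T_{\max}:\ ||\theta(\tau)||_{H^{\sigma}}\le\varepsilon\text{ for all }\tau\le t\}$, which is positive by the hypothesis $||\theta_0||_{H^{\sigma}}\le\varepsilon$ and continuity. On $[0,T^{*})$ the first bracket is bounded below by $1-(C_{\sigma}\mathfrak{m}^{\star}/\mathfrak{m}_{\star})\varepsilon>0$, so $||\theta(\cdot)||_{H^{\sigma}}$ is non-increasing and hence stays $\le||\theta_0||_{H^{\sigma}}\le\varepsilon$; a standard continuity argument (using the strict inequality in the choice of $\varepsilon$) then upgrades this to $T^{*}=T_{\max}$. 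The second tier feeds the bound $||\theta(t)||_{H^{\sigma}}\le\varepsilon$ into the second estimate, whose bracket is now $\ge1-(C_{s}\mathfrak{m}^{\star}/\mathfrak{m}_{\star})\varepsilon\ge0$; therefore $\partial_t||\theta||_{H^{s}}^2\le0$ and $||\theta(t)||_{H^{s}}\le||\theta_0||_{H^{s}}$ on $[0,T_{\max})$, which is the claimed maximum principle. Finally, since the existence time furnished by Theorem \ref{local_existence} depends only on $||\theta_0||_{H^{s}}$ (cf. (\ref{value_T_1})) and since the solution keeps its frequency support in $\mathrm{X}_{\mathfrak{C}}$ and its zero vertical mean, this uniform \emph{a priori} bound lets one re-apply the local theorem in steps of a fixed length, forcing $T_{\max}=\infty$ and yielding global existence.

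The main obstacle, and the reason this is not immediate from Theorem \ref{energy_estimate}, is that the smallness hypothesis is imposed only on $||\theta_0||_{H^{5/2^{+}}}$ and not on the full norm $||\theta_0||_{H^{s}}$, so one cannot directly insert $||\theta||_{H^{s}}$ into the damping bracket. Making the two tiers dovetail cleanly is the delicate point: one must verify that the critical norm stays below the threshold $\varepsilon$ for \emph{all} times of existence (a bootstrap step that genuinely needs the strict inequality and the monotonicity it produces), and that the $H^s$-norm-dependent local existence time really does deliver continuation up to $T_{\max}=\infty$. The energy computations themselves are already packaged in Theorem \ref{energy_estimate}, so beyond these structural checks the proof is short.
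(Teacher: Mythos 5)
Your proposal is correct and follows essentially the route the paper intends: the corollary is stated as a direct consequence of Theorem \ref{energy_estimate}, and the implicit argument is precisely your two-tier continuity/bootstrap scheme (propagate smallness of the critical $H^{5/2^{+}}$ norm, deduce monotonicity of the $H^{s}$ norm, then continue the local solution using the fact that the existence time in (\ref{value_T_1}) depends only on $\|\Lambda^{s}\theta_{0}\|_{L^{2}}$). Your explicit handling of the case $\sigma\le s$ and of the strict inequality in the choice of $\varepsilon$ fills in details the paper leaves unstated, but the substance is the same.
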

%\begin{proof}
%
%\end{proof}

%\vspace{5 cm}
In the following section we will improve the previous result. Using a perturbative argument, we are able to derive explicit expressions that quantify the decay rates.  This leads to an asymptotic stability result of the steady state.

\subsection{Linear \& non-linear estimates}\label{Sec_4.2}
The linearized equation gives very good decay properties. Hence, the main achievement of this section is to control the nonlinearity, so that it would not destroy the decay provided by the linearized equation.

\subsubsection{Linear decay}
We approach the question of global well-posedness for a small initial data from a perturbative point of
view, i.e., we see (\ref{problema_perturbado}) as a non-linear perturbation of the linear problem. The linearized equation around the trivial solution $(\theta,\mathbf{u})=(0,0)$ reads as
\begin{equation}\label{problema_lineal}
\left\{
\begin{array}{rl}
\partial_t\theta(\mathbf{x},t)  +M_3[\theta](\mathbf{x},t)&= \phantom{+} 0  \\
\theta(\mathbf{x},0)&=\phantom{+} \theta_0(\mathbf{x})
\end{array}
\right.
\end{equation}
where the initial data $\theta_0\in H^s(\T^3)$ has zero vertical mean and frequency support in $\mathrm{X}_{\mathfrak{C}}$.\vspace{0.2 cm}

As $M_3$ is a positive operator, we derive the exponential decay in time of solutions to the linear problem with decay rate depending of the frequency support of the initial data.

\begin{cor}
The solution of (\ref{problema_lineal}) with initial data  $\theta_0\in H^s(\T^3)$ and with zero vertical mean and frequency support in $\mathrm{X}_{\mathfrak{C}}$ satisfies that
$$||\theta||_{H^s}(t)\leq ||\theta_0||_{H^s}\exp(-\mathfrak{m}_{\star}t),$$
where $\mathfrak{m}_{\star}(\mathfrak{C})$ goes to zero as $\mathfrak{C}$ tends to infinity.
\end{cor}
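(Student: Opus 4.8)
The plan is to exploit that the linear equation (\ref{problema_lineal}) is diagonalized by the Fourier transform. First I would write its solution explicitly: since $M_3$ is the Fourier multiplier with symbol $\widehat{M_3}(\mathbf{k})$, for $\mathbf{k}\in\Z^3_\star$ we have
\[
\widehat{\theta}(\mathbf{k},t)=e^{-\widehat{M_3}(\mathbf{k})\,t}\,\widehat{\theta_0}(\mathbf{k}),
\]
while on $\{k_3=0\}$ the datum vanishes by the zero vertical mean assumption, a property that is preserved in time. Because the multiplier $e^{-\widehat{M_3}(\mathbf{k})t}$ never vanishes, it cannot enlarge the frequency support of the function on which it acts, so $\text{supp}(\widehat{\theta}(t))\subset\mathrm{X}_{\mathfrak{C}}$ for every $t\geq 0$, exactly as in the proof of Theorem \ref{local_existence_hyperdisipation}.

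The key step is the pointwise-in-frequency lower bound on $\widehat{M_3}$. Since $\text{supp}(\widehat{\theta_0})\subset\mathrm{X}_{\mathfrak{C}}$, Corollary \ref{M3_order_zero} provides a constant $\mathfrak{m}_\star=\mathfrak{m}_\star(\mathfrak{C})>0$ with $\widehat{M_3}(\mathbf{k})\geq\mathfrak{m}_\star$ for every $\mathbf{k}$ in that support. Consequently $e^{-\widehat{M_3}(\mathbf{k})t}\leq e^{-\mathfrak{m}_\star t}$ there, and therefore
\[
|\widehat{\theta}(\mathbf{k},t)|\leq e^{-\mathfrak{m}_\star t}\,|\widehat{\theta_0}(\mathbf{k})|\qquad\text{for all }\mathbf{k}\in\Z^3.
\]

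Finally I would multiply by $(1+|\mathbf{k}|^2)^s$, sum over $\mathbf{k}\in\Z^3$ and apply Plancherel's theorem:
\[
\|\theta\|_{H^s}^2(t)=\sum_{\mathbf{k}\in\Z^3}(1+|\mathbf{k}|^2)^s|\widehat{\theta}(\mathbf{k},t)|^2\leq e^{-2\mathfrak{m}_\star t}\sum_{\mathbf{k}\in\Z^3}(1+|\mathbf{k}|^2)^s|\widehat{\theta_0}(\mathbf{k})|^2=e^{-2\mathfrak{m}_\star t}\,\|\theta_0\|_{H^s}^2,
\]
and take square roots. Equivalently, one can avoid the explicit solution and argue by energy estimates: apply $\Lambda^s$ to (\ref{problema_lineal}), pair with $\Lambda^s\theta$ in $L^2$ to get $\tfrac12\partial_t\|\theta\|_{\dot H^s}^2=-\|\sqrt{M_3}\,[\Lambda^s\theta]\|_{L^2}^2\leq-\mathfrak{m}_\star\|\theta\|_{\dot H^s}^2$ by Corollary \ref{M3_order_zero}, add the analogous $L^2$ bound (\ref{L2_norm}), and close by Grönwall. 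There is no genuine obstacle in this argument; the only place the hypotheses are used is the lower bound $\widehat{M_3}\geq\mathfrak{m}_\star$, which is false without the frequency localization and which forces $\mathfrak{m}_\star\to0$ as $\mathfrak{C}\to\infty$, in agreement with the statement.
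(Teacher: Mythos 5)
Your proof is correct and follows exactly the route the paper intends: the linear equation is solved explicitly by the Fourier multiplier $e^{-\widehat{M_3}(\mathbf{k})t}$, the frequency support is preserved, and the lower bound $\widehat{M_3}(\mathbf{k})\geq\mathfrak{m}_{\star}$ on $\mathrm{X}_{\mathfrak{C}}$ from Corollary \ref{M3_order_zero} yields the decay after summing with Plancherel. The alternative Gr\"onwall argument you sketch is equally valid and is the one the paper effectively reuses in its nonlinear energy estimates.
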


\subsubsection{Non-linear decay}
Next, we will show how this decay of the linear solutions can be used to
establish the stability of the stationary solution $(\theta, \mathbf{u})=(0,0)$ for the general problem (\ref{problema_perturbado}). When perturbing
around it, we get the following system:
\begin{equation}\label{perturbado_lineal}
\left\{
\begin{array}{rl}
\partial_t\theta(\mathbf{x},t) +M_3[\theta](\mathbf{x},t) &= -\mathbf{u}(\mathbf{x},t)\cdot\nabla\theta(\mathbf{x},t)  \\
\theta(\mathbf{x},0)&=\phantom{-}\theta_0(\mathbf{x})
\end{array}
\right.
\end{equation}
where $\mathbf{u}(\mathbf{x},t)=\textbf{M}[\theta](\mathbf{x},t)$ and the initial data $\theta_0$ satisfies the same hypothesis.
Using Duhamel's formula, we write the solution of (\ref{perturbado_lineal}) as:
$$\theta(\mathbf{x},t)=e^{\mathcal{L}(t)}\theta(\mathbf{x},0)-\int_{0}^{t}e^{\mathcal{L}(t-\tau)}\left[\mathbf{u}\cdot\nabla\theta\right](\mathbf{x},\tau)\,d\tau$$
where $\mathcal{L}(t)$ denotes the solution operator of the associated linear problem (\ref{problema_lineal}). Therefore, we have that:
\begin{equation}\label{Duhamel's_formula}
||\theta||_{H^s}(t)\leq ||\theta_0||_{H^s}\exp(-\mathfrak{m}_{\star}t)+\int_{0}^{t}||\mathbf{u}\cdot\nabla\theta||_{H^s}(\tau)\exp\left(-\mathfrak{m}_{\star}(t-\tau)\right)\,d\tau.
\end{equation}

\subsection{The bootstraping}\label{Sec_4.3}
We now demonstrate the bootstrap argument used to prove our goal. The general
approach here is a typical continuity argument that has been used successfully in a plethora of other cases.
Theorem \ref{energy_estimate} tell us that the following estimate holds for $s>5/2$:
\begin{equation}\label{energy_estimate_boostraping}
\tfrac{1}{2}\partial_t ||\theta||_{H^s}^2(t)\leq -\mathfrak{m}_{\star}\left[1-\left(\tfrac{C_s \mathfrak{m}^{\star}}{\mathfrak{m}_{\star}}\right)||\theta||_{H^{5/2^{+}}}(t)\right] ||\theta||_{H^s}^2(t).
\end{equation}
In the following, let $\alpha\in(0,1)$ be a free parameter  and $\kappa:=\frac{1}{\alpha}+\frac{5}{2}^{+}$. We want to prove that $||\theta||_{H^{5/2^{+}}}$ decays in time. This will allow us to close the energy estimate and \hyphenation{fi-nish} finish the proof. We will prove it through a bootstrap argument, where the main ingredient is the estimate $(\ref{energy_estimate_boostraping})$.\vspace{0.2 cm}

%\noindent
%\textbf{Remark:} As before, our initial data $\theta_0$ has zero vertical mean over $\T^3$ and frequency support in $\mathrm{X}_{\mathfrak{C}}$. 

\subsubsection{Exponential decay of $||\theta||_{H^{5/2^{+}}}$}
In order to control $||\theta||_{H^{5/2^{+}}}(t)$ in time, we have the following result.
\begin{lemma}\label{decay_h^5/2}
Assume that $||\theta_0||_{H^{\kappa}}\leq \epsilon$ and $||\theta||_{H^{\kappa}}(t)\leq 4\epsilon$  for all $t\in[0,T]$ where $\kappa=\frac{1}{\alpha}+\frac{5}{2}^{+}$ with $0<\alpha<1$. Then, we have:
$$||\theta||_{H^{5/2^{+}}}(t)\leq 2\epsilon\exp(-\mathfrak{m}_{\star}\,t) \qquad \text{for all}\quad  t\in[0,T].$$
\end{lemma}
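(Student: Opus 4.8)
The plan is to close the bound by a continuity argument based on the Duhamel representation \eqref{Duhamel's_formula} taken at the regularity level $s=\tfrac{5}{2}^{+}$, using the a priori control $\|\theta\|_{H^{\kappa}}(t)\le 4\epsilon$ to absorb the one derivative that is lost in the transport term $\mathbf u\cdot\nabla\theta$. Note that $L^{2}$ decay is not needed here: the linear decay at rate $\mathfrak m_{\star}$ already sits inside \eqref{Duhamel's_formula} at every regularity.

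First I would estimate the nonlinearity at level $\tfrac{5}{2}^{+}$. Since $\operatorname{supp}(\widehat\theta(t))\subset\mathrm X_{\mathfrak C}$ for all $t\in[0,T]$, Lemma~\ref{M_order0} makes $\mathbf M$ act as an order-zero multiplier on $\theta$; combining the product estimate (Lemma~\ref{product_estimate}), the incompressibility $\nabla\cdot\mathbf u=0$, and the embedding $H^{3/2^{+}}(\T^{3})\hookrightarrow L^{\infty}(\T^{3})$, one gets $\|\mathbf u\cdot\nabla\theta\|_{H^{5/2^{+}}}\lesssim \mathfrak m^{\star}\,\|\theta\|_{H^{5/2^{+}}}\,\|\theta\|_{H^{7/2^{+}}}$. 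Here a genuine derivative is lost, and this is exactly where $\kappa=\tfrac1\alpha+\tfrac52^{+}$ enters: since $\tfrac72^{+}=(1-\alpha)\tfrac52^{+}+\alpha\kappa$, interpolation gives $\|\theta\|_{H^{7/2^{+}}}\lesssim\|\theta\|_{H^{5/2^{+}}}^{1-\alpha}\|\theta\|_{H^{\kappa}}^{\alpha}$, and inserting the hypothesis $\|\theta\|_{H^{\kappa}}\le 4\epsilon$ I arrive at $\|\mathbf u\cdot\nabla\theta\|_{H^{5/2^{+}}}(\tau)\le C\,\epsilon^{\alpha}\,\|\theta\|_{H^{5/2^{+}}}^{2-\alpha}(\tau)$ for all $\tau\in[0,T]$, with $C=C(\mathfrak C,\alpha)$ depending only on the Sobolev/product constants and $\mathfrak m^{\star}$.

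Next I would feed this into \eqref{Duhamel's_formula} with $s=\tfrac52^{+}$, multiply through by $e^{\mathfrak m_{\star}t}$, and set $G(t):=\sup_{0\le\tau\le t}e^{\mathfrak m_{\star}\tau}\|\theta\|_{H^{5/2^{+}}}(\tau)$. Writing $\|\theta\|_{H^{5/2^{+}}}(\tau)=e^{-\mathfrak m_{\star}\tau}\bigl(e^{\mathfrak m_{\star}\tau}\|\theta\|_{H^{5/2^{+}}}(\tau)\bigr)$ and using $\|\theta_{0}\|_{H^{5/2^{+}}}\le\|\theta_{0}\|_{H^{\kappa}}\le\epsilon$, the convolution becomes an integral with weight $e^{-(1-\alpha)\mathfrak m_{\star}\tau}$; since $\alpha<1$ this weight is integrable on $[0,\infty)$, so I obtain $G(t)\le \epsilon+C'\epsilon^{\alpha}G(t)^{2-\alpha}$ with $C'=C/((1-\alpha)\mathfrak m_{\star})$. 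Crucially, the full linear rate $\mathfrak m_{\star}$ is preserved in the output because the nonlinear contribution decays strictly faster (rate $(2-\alpha)\mathfrak m_{\star}$).

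Finally I would close by a standard continuity argument. The map $t\mapsto\|\theta\|_{H^{5/2^{+}}}(t)$ is continuous (from the equation $\partial_{t}\theta=-M_{3}[\theta]-\mathbf u\cdot\nabla\theta$ one has $\partial_{t}\theta\in L^{\infty}(0,T;H^{s-1})$ with $s-1>\tfrac52^{+}$), hence $G$ is continuous, nondecreasing, with $G(0)\le\epsilon<2\epsilon$. If $\epsilon$ is small enough that $C'2^{2-\alpha}\epsilon<1$ (which is guaranteed by the definition of $\epsilon_{0}$), then $\epsilon+C'\epsilon^{\alpha}(2\epsilon)^{2-\alpha}<2\epsilon$, so $G$ can never attain the value $2\epsilon$ on $[0,T]$; therefore $G(t)<2\epsilon$, i.e. $\|\theta\|_{H^{5/2^{+}}}(t)\le 2\epsilon\,e^{-\mathfrak m_{\star}t}$ on $[0,T]$. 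The single real obstacle is the one-derivative loss in $\mathbf u\cdot\nabla\theta$; the exponent $\alpha\in(0,1)$ is tuned precisely so that trading the missing derivative against $\|\theta\|_{H^{\kappa}}$ costs only $\epsilon^{\alpha}$, which at the same time makes the nonlinear term super-linear ($2-\alpha>1$) and keeps the Duhamel time integral convergent at the linear decay rate $\mathfrak m_{\star}$.
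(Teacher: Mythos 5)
Your proposal is correct and follows essentially the same route as the paper: Duhamel at the $H^{5/2^{+}}$ level, the product estimate combined with Lemma~\ref{M_order0} to get $\|\mathbf u\cdot\nabla\theta\|_{H^{5/2^{+}}}\lesssim \mathfrak m^{\star}\|\theta\|_{H^{5/2^{+}}}\|\theta\|_{H^{7/2^{+}}}$, the Gagliardo--Nirenberg interpolation against $\|\theta\|_{H^{\kappa}}\le 4\epsilon$, and the integrable weight $e^{-(1-\alpha)\mathfrak m_{\star}\tau}$ to preserve the full linear rate. The only difference is cosmetic packaging of the continuity step (your monotone sup-function $G(t)$ versus the paper's local improvement from $4\epsilon e^{-\mathfrak m_{\star}t}$ to $2\epsilon e^{-\mathfrak m_{\star}t}$ on $[0,T^{\star}]$), and your smallness threshold matches the paper's $\epsilon_{0}$ up to harmless constants.
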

\begin{proof}
Duhamel's formula (\ref{Duhamel's_formula}) give us:
$$||\theta||_{H^{5/2^{+}}}(t)\leq ||\theta_0||_{H^{5/2^{+}}}\exp(-\mathfrak{m}_{\star}\,t)+\int_{0}^{t}||\mathbf{u}\cdot\nabla\theta||_{H^{5/2^{+}}}(\tau)\exp\left(-\mathfrak{m}_{\star}\,(t-\tau)\right)\,d\tau$$
and using the algebraic properties of Sobolev spaces we have that:
$$||\mathbf{u}\cdot\nabla\theta||_{H^{5/2^{+}}}\lesssim ||\mathbf{u}||_{H^{5/2^{+}}}||\theta||_{H^{7/2^{+}}}
\lesssim \mathfrak{m}^{\star} ||\theta||_{H^{5/2^{+}}}||\theta||_{H^{7/2^{+}}}.$$
The last inequality is due to Lemma \ref{M_order0} and the fact that $\text{supp}(\widehat{\theta}(t))\subset \mathrm{X}_{\mathfrak{C}}$ as long as the solution exists. Moreover, due to the well-known Gagliardo-Nirenberg interpolation inequality:
$$||\theta||_{H^{7/2^{+}}}\lesssim ||\theta||_{H^{5/2^{+}}}^{1-\alpha} ||\theta||_{H^{1/\alpha+5/2^{+}}}^{\alpha} \qquad \text{with}\quad0<\alpha<1$$
we arrive to
$$||\theta||_{H^{5/2^{+}}}(t)\leq ||\theta_0||_{H^{5/2^{+}}}\exp(-\mathfrak{m}_{\star}\,t)+\int_{0}^{t} C_{\alpha} \,\mathfrak{m}^{\star} ||\theta||_{H^{5/2^{+}}}^{2-\alpha}(\tau)||\theta||_{H^{1/\alpha+5/2^{+}}}^{\alpha}(\tau)\exp\left(-\mathfrak{m}_{\star}\,(t-\tau)\right)\,d\tau.$$

By hypothesis, we have that $||\theta||_{H^{1/\alpha+5/2^{+}}}(t)\leq  4\epsilon$ on the interval $[0,T]$. Then, we obtain that:
\begin{equation}\label{step2_boostraping}
||\theta||_{H^{5/2^{+}}}(t)\leq \epsilon\exp(-\mathfrak{m}_{\star}\,t)+\int_{0}^{t} (4\epsilon)^{\alpha} C_{\alpha} \,\mathfrak{m}^{\star} ||\theta||_{H^{5/2^{+}}}^{2-\alpha}(\tau)\exp\left(-\mathfrak{m}_{\star}\,(t-\tau)\right)\,d\tau.
\end{equation}
In particular, there exist $0<T^{\star}(\alpha)\leq T$ such that for $t\in[0,T^{\star}]$ we have that:
\begin{equation}\label{step1_boostraping}
||\theta||_{H^{5/2^{+}}}(t)\leq 4\epsilon\exp(-\mathfrak{m}_{\star}\,t)
\end{equation}
If we restrict to $0\leq t\leq T^{\star}$ and we apply (\ref{step1_boostraping}) into (\ref{step2_boostraping}), we have:
\begin{align*}
||\theta||_{H^{5/2^{+}}}(t)&\leq \epsilon\exp(-\mathfrak{m}_{\star}\,t) +(4\epsilon)^2 C_{\alpha} \mathfrak{m}^{\star}\exp(-\mathfrak{m}_{\star}\,t)\int_{0}^{t}\exp(-(1-\alpha)\mathfrak{m}_{\star}\tau)\,d\tau\\
%&=\epsilon\exp(-\mathfrak{m}_{\star}\,t) +(4\epsilon)^2 C_{\alpha} \mathfrak{m}^{\star}\exp(-\mathfrak{m}_{\star}\,t)\frac{1-\exp(-(1-\alpha)\mathfrak{m}_{\star}t)}{(1-\alpha)\mathfrak{m}_{\star}}\\
%&=\epsilon\exp(-\mathfrak{m}_{\star}\,t)\left[1+\epsilon 4^2 C_{\alpha} \mathfrak{m}^{\star}\frac{1-\exp(-(1-\alpha)\mathfrak{m}_{\star}t)}{(1-\alpha)\mathfrak{m}_{\star}} \right]\\
&\leq \epsilon\exp(-\mathfrak{m}_{\star}\,t)\left[1+\epsilon \frac{4^2 C_{\alpha} \mathfrak{m}^{\star}}{(1-\alpha)\mathfrak{m}_{\star}} \right].
\end{align*}
Taking $0<\epsilon<\frac{(1-\alpha)\mathfrak{m}_{\star}}{4^2 C_{\alpha} \mathfrak{m}^{\star}}$ we have proved that:
$$||\theta||_{H^{5/2^{+}}}(t)\leq 2\epsilon\exp(-\mathfrak{m}_{\star}\,t)$$
for all $t\in[0,T^{\star}]$ and, by continuity, for all $t\in[0,T]$.
\end{proof}

\subsubsection{A new boostraping argument.}
In order to control $||\theta||_{H^{\kappa}}(t)$ in time, we have the following result.
\begin{lemma}\label{smallness_H^kappa}
Assume that $||\theta_0||_{H^{\kappa}}\leq \epsilon$ and $||\theta||_{H^{\kappa}}(t)\leq 4\epsilon$  for all $t\in[0,T]$ where $\kappa=\frac{1}{\alpha}+\frac{5}{2}^{+}$ with $0<\alpha<1$. Then, we have that:
$$||\theta||_{H^{\kappa}}(t)\leq 2\epsilon \qquad \text{for all}\quad  t\in[0,T].$$
\end{lemma}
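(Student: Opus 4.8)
The plan is to close the continuity argument by running the top-order energy estimate at the exponent $s=\kappa$. Since $\alpha\in(0,1)$ we have $\kappa=\tfrac1\alpha+\tfrac52^{+}>\tfrac52$, so Theorem \ref{energy_estimate} applies with $s=\kappa$ and gives
$$\tfrac12\partial_t\|\theta\|_{H^{\kappa}}^2(t)\leq -\mathfrak{m}_{\star}\left[1-\left(\tfrac{C_{\kappa}\mathfrak{m}^{\star}}{\mathfrak{m}_{\star}}\right)\|\theta\|_{H^{5/2^{+}}}(t)\right]\|\theta\|_{H^{\kappa}}^2(t).$$
Thus the whole lemma reduces to showing that the bracket stays nonnegative on $[0,T]$.

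To that end I would invoke Lemma \ref{decay_h^5/2}, whose hypotheses are exactly the two assumptions made here, to obtain $\|\theta\|_{H^{5/2^{+}}}(t)\leq 2\epsilon\exp(-\mathfrak{m}_{\star}t)\leq 2\epsilon$ on $[0,T]$. The mechanism is that $H^{5/2^{+}}$ is a strictly lower-order norm than $H^{\kappa}$ (because $1/\alpha>0$), so its smallness is precisely what is needed to absorb the nonlinear term in the $\dot H^{\kappa}$ estimate without any derivative loss.

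Then I would require, in addition to the smallness already imposed in Lemma \ref{decay_h^5/2}, that $\epsilon$ be small enough that $\tfrac{C_{\kappa}\mathfrak{m}^{\star}}{\mathfrak{m}_{\star}}\,2\epsilon\leq\tfrac12$ (say $\epsilon\leq\tfrac{\mathfrak{m}_{\star}}{4C_{\kappa}\mathfrak{m}^{\star}}$); this is one further condition of the same type, to be folded into the definition of $\epsilon_0$ in (\ref{epsilon_0_def}). With this choice the bracket is $\geq\tfrac12$, hence $\partial_t\|\theta\|_{H^{\kappa}}^2(t)\leq-\mathfrak{m}_{\star}\|\theta\|_{H^{\kappa}}^2(t)\leq0$, and therefore $\|\theta\|_{H^{\kappa}}(t)\leq\|\theta_0\|_{H^{\kappa}}\leq\epsilon<2\epsilon$ for all $t\in[0,T]$ — in fact one even gets the stronger decay $\|\theta\|_{H^{\kappa}}(t)\leq\epsilon\exp(-\tfrac{\mathfrak{m}_{\star}}{2}t)$ by keeping the Gr\"onwall factor, though only the bound $2\epsilon$ is needed to run the bootstrap.

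The statement carries no genuine analytic difficulty: it is the soft ``improvement'' half of the standard continuity argument, and the only point requiring care is the bookkeeping of smallness thresholds — checking that the constraints on $\epsilon$ coming from Theorem \ref{energy_estimate}, from Lemma \ref{decay_h^5/2}, and from the present step are mutually compatible and can all be subsumed under a single $\epsilon_0=\epsilon_0(\alpha,\mathfrak{C})$, while remembering that the constants $\mathfrak{m}_{\star},\mathfrak{m}^{\star}$ are attached to the fixed cone $\mathrm{K}_{\mathfrak{C}}$ and hence do not degenerate along the argument.
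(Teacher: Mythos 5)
Your proposal is correct and follows essentially the same route as the paper: run the energy estimate of Theorem \ref{energy_estimate} at $s=\kappa$, feed in the decay of $\|\theta\|_{H^{5/2^{+}}}$ from Lemma \ref{decay_h^5/2}, and impose one more smallness condition on $\epsilon$ to be absorbed into $\epsilon_0$. The only (harmless) difference is in the last step: the paper integrates the exponential decay inside Gr\"onwall, bounding the total nonlinear contribution by $\exp(2\epsilon C_{\kappa}\mathfrak{m}^{\star}/\mathfrak{m}_{\star})\leq 2$ under the condition $\epsilon<\tfrac{\log\sqrt{2}}{C_{\kappa}}\tfrac{\mathfrak{m}_{\star}}{\mathfrak{m}^{\star}}$, whereas you make the bracket pointwise positive, which yields the slightly more restrictive threshold $\epsilon\leq\tfrac{\mathfrak{m}_{\star}}{4C_{\kappa}\mathfrak{m}^{\star}}$ but the same conclusion.
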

\begin{proof}
Applying Gr\"onwall's inequality into (\ref{energy_estimate_boostraping}) and  Lemma \ref{decay_h^5/2}, for $t\in[0,T]$ we have that:
\begin{align*}
||\theta||_{H^{\kappa}}(t)&\leq ||\theta_0||_{H^{\kappa}}\,\exp\left[-\mathfrak{m}^{\star}\int_{0}^{t}\left(1-\left(\tfrac{C_{\kappa} \mathfrak{m}^{\star}}{\mathfrak{m}^{\star}}\right)||\theta||_{H^{5/2^{+}}}(\tau)\right)\,d\tau \right]\\
&\leq ||\theta_0||_{H^{\kappa}}\exp\left( \frac{2\epsilon\, C_{\kappa} \mathfrak{m}^{\star}}{\mathfrak{m}_{\star}}\right).
\end{align*}
Taking $0<\epsilon< \frac{\log \sqrt{2}}{ C_{\kappa} }\frac{\mathfrak{m}_{\star}}{\mathfrak{m}^{\star}}$ we have proved that $||\theta||_{H^{\kappa}}(t) \leq 2\epsilon$ for all $t\in[0,T].$
\end{proof}
Therefore, it is natural to define a ``smallness'' parameter $\epsilon_0$ given by:
\begin{equation}\label{epsilon_0_def}
\epsilon_0:=\min\left\lbrace \frac{(1-\alpha)}{4^2 C_{\alpha}},\frac{\log \sqrt{2}}{ C_{\kappa} }\right\rbrace \frac{\mathfrak{m}_{\star}}{\mathfrak{m}^{\star}}.
\end{equation}
In consequence, a straightforward  combination of Lemma \ref{decay_h^5/2} and Lemma \ref{smallness_H^kappa} give us:
\begin{cor}\label{cor_final}
Let $\theta_0\in H^{\kappa}(\T^3)$ such that $||\theta_0||_{H^{\kappa}}\leq \epsilon$ with $0<\epsilon\leq \epsilon_0$. Then, for all $t\geq 0$ we have that:
$$||\theta||_{H^{\kappa}}(t) \leq 2\epsilon \qquad \text{and} \qquad ||\theta||_{H^{5/2^{+}}}(t)\leq 2\epsilon\exp(-\mathfrak{m}_{\star}\,t).$$
\end{cor}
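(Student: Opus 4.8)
The plan is to run a standard continuity (bootstrap) argument on the quantity $\|\theta\|_{H^{\kappa}}(t)$, feeding into it the two estimates already established in Lemma \ref{decay_h^5/2} and Lemma \ref{smallness_H^kappa}. First I would fix $\theta_0\in H^{\kappa}(\T^3)$ with zero vertical mean, $\text{supp}(\widehat{\theta_0})\subset \mathrm{X}_{\mathfrak{C}}$ and $\|\theta_0\|_{H^{\kappa}}\leq \epsilon\leq \epsilon_0$. Since $\kappa=\tfrac{1}{\alpha}+\tfrac{5}{2}^{+}>\tfrac{5}{2}$, Theorem \ref{local_existence} yields a unique local solution $\theta\in L^{\infty}(0,T_0;H^{\kappa}(\T^3))$ with $\text{supp}(\widehat{\theta}(t))\subset \mathrm{X}_{\mathfrak{C}}$ for $t\in[0,T_0)$; extending it, we obtain a maximal existence time $T_{\max}\in(0,\infty]$ on which $t\mapsto \|\theta\|_{H^{\kappa}}(t)$ is continuous, and the energy estimate of Theorem \ref{energy_estimate} shows that the $H^{\kappa}$ norm remains finite as long as $\|\theta\|_{H^{5/2^{+}}}$ does.

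Next I would set
$$T^{\star}:=\sup\left\{\,T\in[0,T_{\max}):\ \|\theta\|_{H^{\kappa}}(t)\leq 4\epsilon\ \text{ for all }t\in[0,T]\,\right\}.$$
Since $\|\theta_0\|_{H^{\kappa}}\leq \epsilon<4\epsilon$ and the $H^{\kappa}$ norm is continuous in time, $T^{\star}>0$. On $[0,T^{\star}]$ the hypotheses of both Lemma \ref{decay_h^5/2} and Lemma \ref{smallness_H^kappa} are met — here one uses that $\epsilon_0$ in \eqref{epsilon_0_def} is precisely the minimum of the two thresholds demanded by those lemmas — so for every $t\in[0,T^{\star}]$ we obtain the improved bounds
$$\|\theta\|_{H^{5/2^{+}}}(t)\leq 2\epsilon\exp(-\mathfrak{m}_{\star}t)\qquad\text{and}\qquad \|\theta\|_{H^{\kappa}}(t)\leq 2\epsilon.$$

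I would then close the argument by contradiction. If $T^{\star}<T_{\max}$, continuity gives $\|\theta\|_{H^{\kappa}}(T^{\star})\leq 2\epsilon$, hence $\|\theta\|_{H^{\kappa}}(t)\leq 3\epsilon<4\epsilon$ on some interval $[0,T^{\star}+\delta]\subset[0,T_{\max})$, contradicting the maximality of $T^{\star}$; therefore $T^{\star}=T_{\max}$. Moreover the uniform bound $\|\theta\|_{H^{5/2^{+}}}(t)\leq 2\epsilon$ on $[0,T_{\max})$ combined with Theorem \ref{energy_estimate} furnishes an a priori bound on $\|\theta\|_{H^{\kappa}}(t)$, which by the continuation criterion forces $T_{\max}=\infty$. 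Thus the two displayed inequalities hold for all $t\geq 0$, which is the assertion of the corollary.

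The only genuinely delicate point is the internal consistency of the bootstrap: one must check that the conclusions of Lemmas \ref{decay_h^5/2} and \ref{smallness_H^kappa} are \emph{strict} improvements over the a priori assumption $\|\theta\|_{H^{\kappa}}\leq 4\epsilon$ (this is exactly why the constant $2$, rather than $4$, appears in their conclusions), and that the continuation criterion for the local theory of Theorem \ref{local_existence} is precisely boundedness of $\|\theta\|_{H^{5/2^{+}}}$, so that the uniform decay estimate genuinely upgrades to global existence. Both facts are already built into the preceding statements, so the corollary follows simply by assembling Lemma \ref{decay_h^5/2}, Lemma \ref{smallness_H^kappa}, Theorem \ref{local_existence} and Theorem \ref{energy_estimate}.
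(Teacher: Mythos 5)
Your proposal is correct and follows essentially the same route as the paper: the paper obtains Corollary \ref{cor_final} as a ``straightforward combination'' of Lemma \ref{decay_h^5/2} and Lemma \ref{smallness_H^kappa} via the standard continuity argument announced at the start of Section \ref{Sec_4.3}, which is exactly the bootstrap you spell out. Your write-up merely makes explicit the definition of $T^{\star}$, the strict improvement from $4\epsilon$ to $2\epsilon$, and the continuation step that the paper leaves implicit.
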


\subsubsection{Exponential decay of $||\theta||_{H^s}$ with $s>\tfrac{7}{2}$}
We have proved the exponential decay in time of $||\theta||_{H^{5/2^{+}}}(t)$. Then, we are in the position to show how the bootstrap can be closed. This is merely a matter of collecting
the conditions established above and showing that they can indeed be satisfied.

\begin{lemma}\label{bootstrap_lemma}
Let $\theta_0\in H^s(\T^3)$ with $s\geq \kappa$ such that $||\theta_0||_{H^{\kappa}}\leq \epsilon$ where $0<\epsilon\leq \epsilon_0$. Then, for all $t\geq 0$ we have that:
$$||\theta||_{H^s}(t)\lesssim ||\theta_0||_{H^s} \exp(-\mathfrak{m}^{\star}t).$$
\end{lemma}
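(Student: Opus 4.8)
The idea is that the lemma is now just a matter of feeding the exponential decay of the \emph{low} norm $\|\theta\|_{H^{5/2^+}}$, already obtained in Corollary~\ref{cor_final}, into the $H^s$ energy inequality of Theorem~\ref{energy_estimate}, and integrating. First I would set up the solution: since $s\geq\kappa>5/2$ and $\text{supp}(\widehat{\theta_0})\subset\mathrm{X}_{\mathfrak{C}}$, Theorem~\ref{local_existence} gives a unique local solution $\theta$ on a maximal interval $[0,T_{\max})$ with $\text{supp}(\widehat{\theta}(t))\subset\mathrm{X}_{\mathfrak{C}}$ throughout. By uniqueness this $\theta$ agrees on $[0,T_{\max})$ with the global $H^\kappa$-solution furnished by Corollary~\ref{cor_final}, whose hypothesis $\|\theta_0\|_{H^\kappa}\leq\epsilon\leq\epsilon_0$ is exactly the one assumed here; hence for all $t\in[0,T_{\max})$,
$$\|\theta\|_{H^{5/2^+}}(t)\leq 2\epsilon\,\exp(-\mathfrak{m}_\star t).$$

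\textbf{Main step.} Because $s>5/2$, estimate $(\ref{energy_estimate_boostraping})$ applies and, inserting the bound above, becomes
$$\tfrac12\partial_t\|\theta\|_{H^s}^2(t)\leq -\mathfrak{m}_\star\Big[\,1-\tfrac{C_s\mathfrak{m}^\star}{\mathfrak{m}_\star}\,2\epsilon\, e^{-\mathfrak{m}_\star t}\,\Big]\|\theta\|_{H^s}^2(t).$$
Grönwall's inequality gives, using $\int_0^t e^{-\mathfrak{m}_\star\tau}\,d\tau\leq \mathfrak{m}_\star^{-1}$,
$$\|\theta\|_{H^s}^2(t)\leq \|\theta_0\|_{H^s}^2\exp\!\Big(-2\mathfrak{m}_\star t+4\epsilon\,C_s\tfrac{\mathfrak{m}^\star}{\mathfrak{m}_\star}\big(1-e^{-\mathfrak{m}_\star t}\big)\Big)\leq \|\theta_0\|_{H^s}^2\, e^{\,4\epsilon C_s\mathfrak{m}^\star/\mathfrak{m}_\star}\, e^{-2\mathfrak{m}_\star t},$$
and taking square roots yields $\|\theta\|_{H^s}(t)\lesssim\|\theta_0\|_{H^s}\exp(-\mathfrak{m}_\star t)$ with implied constant $\exp(2\epsilon C_s\mathfrak{m}^\star/\mathfrak{m}_\star)$, finite since $\epsilon\leq\epsilon_0$ is fixed. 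In particular $\|\theta\|_{H^s}$ stays bounded on $[0,T_{\max})$, so the continuation criterion forces $T_{\max}=\infty$, and the displayed decay holds for all $t\geq0$.

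\textbf{The point to watch.} There is no genuine obstacle left here — the real work was the nested continuity argument already carried out in Lemma~\ref{decay_h^5/2} and Lemma~\ref{smallness_H^kappa}. The only structural feature being exploited is the decoupling of frequency scales in $(\ref{energy_estimate_boostraping})$: the destabilizing factor multiplying $\|\theta\|_{H^s}^2$ is the low norm $\|\theta\|_{H^{5/2^+}}$ rather than $\|\theta\|_{H^s}$ itself, and since $5/2^+\leq\kappa$ this low norm is precisely the quantity that Corollary~\ref{cor_final} controls and shows to decay — so \emph{no} smallness on $\|\theta_0\|_{H^s}$ is needed, only on $\|\theta_0\|_{H^\kappa}$. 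Finally, this lemma combined with Corollary~\ref{cor_final} is exactly Theorem~\ref{main_thm} upon unwinding $\kappa=\tfrac1\alpha+\tfrac52^+$.
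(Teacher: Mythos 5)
Your proof is correct and follows essentially the same route as the paper: feed the exponential decay of $\|\theta\|_{H^{5/2^+}}$ from Corollary~\ref{cor_final} into the energy inequality (\ref{energy_estimate_boostraping}) and apply Gr\"onwall, using $\int_0^t e^{-\mathfrak{m}_\star\tau}\,d\tau\leq\mathfrak{m}_\star^{-1}$ to absorb the perturbative term into a constant. Your extra remarks on continuation past $T_{\max}$ make explicit what the paper leaves implicit, and your decay rate $\mathfrak{m}_\star$ (rather than the $\mathfrak{m}^\star$ appearing in the lemma's statement, which is a typo) matches both the paper's own proof and Theorem~\ref{main_thm}.
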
 
\begin{proof}
Applying Gr\"onwall's inequality into (\ref{energy_estimate_boostraping}) we have:
$$||\theta||_{H^s}(t)\leq ||\theta_0||_{H^s}\,\exp\left[-\mathfrak{m}_{\star}\int_{0}^{t}\left(1-\left(\tfrac{C_s \mathfrak{m}^{\star}}{\mathfrak{m}_{\star}}\right)||\theta||_{H^{5/2^{+}}}(\tau)\right)\,d\tau \right].$$
The exponential decay of $||\theta||_{H^{5/2^{+}}}$ proved in Corollary \ref{cor_final} give us:
$$||\theta||_{H^s}(t)\leq ||\theta_0||_{H^s} \exp(-\mathfrak{m}_{\star}t) \exp\left(  2\epsilon\,C_s \frac{\mathfrak{m}^{\star}}{\mathfrak{m}_{\star}}\right)$$
and as $0<\epsilon\leq \epsilon_0$ there exists a constant $C=C(\alpha,s,\kappa)$ such that $||\theta||_{H^s}(t)\leq C ||\theta_0||_{H^s} \exp(-\mathfrak{m}_{\star}t).$
\end{proof}

%\vspace{3 cm}
%\subsection{Steady States/ Stationay solution}
%
%%\section{SIPM }
%\Blue{¿que hacemos?}\\
%\Blue{convex integration Onsager??}
%\newpage
\noindent
\textbf{Funding:} The author is  partially supported by Spanish National Research Project MTM2017-89976-P and ICMAT Severo Ochoa projects SEV-2011-0087 and SEV-2015-556.\vspace{0.2 cm}

\noindent
\textbf{Acknowledgements:}
The author thanks \'Angel Castro and Diego C\'ordoba for their valuable comments. 
The author acknowledges helpful conversations with Susan Friedlander and Roman Shvydkoy.

\bibliography{bibliografia}
\bibliographystyle{plain}

\Addresses

\end{document}